\DeclareMathOperator\C{\mathbb C}
\DeclareMathOperator\R{\mathbb R}
\DeclareMathOperator\Z{\mathbb Z}
\newcommand{\Om}{\Omega}
\newcommand{\ber}{\operatorname{ber}}
\newcommand{\G}{{\mathbb G}}
\newtheorem{theorem}{Theorem}[section]
\newtheorem{lemma}[theorem]{Lemma}
\newtheorem{cor}[theorem]{Corollary}
\newtheorem{prop}[theorem]{Proposition}
\theoremstyle{definition}
\newtheorem{definition}[theorem]{Definition}
\theoremstyle{remark}
\newtheorem{remark}[theorem]{Remark}
\newcommand{\dontprint}[1]\relax
\newcommand{\La}{\Lambda}
\newcommand{\Ga}{\Gamma}
\renewcommand{\P}{{\mathbb P}}
\newcommand{\A}{{\mathbb A}}
\newcommand{\wt}{\widetilde}
\newcommand{\ot}{\otimes}
\newcommand{\bos}{\operatorname{bos}}
\newcommand{\can}{\operatorname{can}}
\newcommand{\Hom}{\operatorname{Hom}}
\newcommand{\Ext}{\operatorname{Ext}}
\renewcommand{\AA}{{\mathcal A}}
\newcommand{\DD}{{\mathcal D}}
\newcommand{\CC}{{\mathcal C}}
\newcommand{\FF}{{\mathcal F}}
\newcommand{\GG}{{\mathcal G}}
\newcommand{\II}{{\mathcal I}}
\newcommand{\LL}{{\mathcal L}}
\newcommand{\MM}{{\mathcal M}}
\newcommand{\NN}{{\mathcal N}}
\newcommand{\OO}{{\mathcal O}}
\newcommand{\UU}{{\mathcal U}}
\newcommand{\PP}{{\mathcal P}}
\renewcommand{\SS}{{\mathcal S}}
\newcommand{\QQ}{{\mathcal Q}}
\newcommand{\VV}{{\mathcal V}}
\newcommand{\WW}{{\mathcal W}}
\newcommand{\XX}{{\mathcal X}}
\newcommand{\YY}{{\mathcal Y}}
\newcommand{\de}{\delta}
\newcommand{\sub}{\subset}
\newcommand{\Pfaff}{\operatorname{Pf}}
\newcommand{\Spec}{\operatorname{Spec}}
\newcommand{\Res}{\operatorname{Res}}
\newcommand{\Ann}{\operatorname{Ann}}
\newcommand{\Det}{\operatorname{Det}}
\newcommand{\Ber}{\operatorname{Ber}}
\newcommand{\Cone}{\operatorname{Cone}}
\newcommand{\lan}{\langle}
\newcommand{\ran}{\rangle}
\newcommand{\ov}{\overline}
\newcommand{\im}{\operatorname{im}}
\newcommand{\om}{\omega}
\newcommand{\la}{\lambda}
\renewcommand{\a}{\alpha}
\renewcommand{\b}{\beta}
\newcommand{\ga}{\gamma}
\newcommand{\id}{\operatorname{id}}
\newcommand{\und}{\underline}
\renewcommand{\th}{\theta}
\newcommand{\coker}{\operatorname{coker}}
\newcommand{\red}{{\operatorname{red}}}
\newcommand{\hra}{\hookrightarrow}
\newcommand{\we}{\wedge}
\newcommand{\De}{\Delta}
\newcommand{\rk}{\operatorname{rk}}
\newcommand{\fm}{{\frak m}}
\newcommand{\Gr}{\operatorname{Gr}}
\newcommand{\per}{\operatorname{per}}
\newcommand{\eps}{\epsilon}
\numberwithin{equation}{section}
\title[Supermeasure and superperiod map]
{Regularity of the superstring supermeasure and the superperiod map}
\author{Giovanni Felder}
\address{Department of mathematics,
ETH Zurich, 8092 Zurich, Switzerland}
\email{felder@math.ethz.ch}
\author{David Kazhdan}
\address{Einstein Institute of Mathematics,
The Hebrew University of Jerusalem,
Jerusalem 91904, Israel}
\email{kazhdan@math.huji.ac.il}
\author{Alexander Polishchuk}
\address{
    Department of Mathematics, 
    University of Oregon, 
    Eugene, OR 97403, USA; National Research University Higher School of Economics; and Korea Institute for 
    Advanced Study 
  }
  \email{apolish@uoregon.edu}
\begin{document}
\maketitle
\begin{abstract}
  The supermeasure whose integral is the genus $g$ vacuum amplitude of
  superstring theory is potentially singular on the locus in the
  moduli space of supercurves where the corresponding even
  theta-characteristic has nontrivial sections. We show that the
  supermeasure is actually regular for $g\leq 11$. The result relies
  on the study of the superperiod map. We also show that the minimal
  power of the classical Schottky ideal that annihilates the image of
  the superperiod map is equal to $g$ if $g$ is odd and is equal to $g$
  or $g-1$ if $g$ is even.
\end{abstract}
\section{Introduction}
The moduli space of supercurves $\SS=\SS_g$ (of given genus $g$) has been
studied in many papers, including \cite{CraneRabin1988, Deligne1987,
  DRS, LeBrunRothstein1988}, motivated by superstring perturbation
theory, see \cite{Witten} for a recent account with references to the
original literature.  The space $\SS$ is a
superstack of complex dimension $3g-3|2g-2$. The corresponding reduced
stack $\SS_{\bos}$ (where ``bos" stands for ``bosonic") is the moduli stack of pairs $(C,L)$, where $C$ is a
usual smooth projective curve of genus $g$ and $L$ is a
theta-characteristic on $C$ (also known as spin structure).  The space
$\SS_{\bos}$ (and hence $\SS$) has two components, which we call {\it even}
and {\it odd}, that are distinguished by the condition that $L$ is
even or odd.

In the first part of this paper we study the behavior of the
superstring supermeasure on the even component $\SS^+$ of the moduli
space of supercurves $\SS$ (the supermeasure vanishes on the odd
component).  Superstring perturbation theory is well-understood up to
genus 2, see \cite{DHokerPhong2002-1,DHokerPhong2002-2}. In this case the
supermoduli spaces are canonically projected, so that it is possible
to first integrate the supermeasure over odd variables to get a
measure on the moduli space of spin curves and a description in
terms of classical geometry. In higher genus there is no
projection to the reduced space \cite{DonagiWitten2012}
but there is also an additional difficulty:
the
supermeasure is a priori only defined on a Zariski open subset of the
moduli space where the corresponding theta-characteristic has no
nontrivial section. E. Witten recently revisited and extended the
study of the supermeasure in \cite{Witten-hol-str} and argued that the
supermeasure actually extends smoothly to the whole moduli space for
low genus, indicating potential difficulties starting at
genus 11.

Our main result is the regularity of this supermeasure
for $g\le 11$ along the locus where the corresponding
theta-characteristic acquires a section (see Theorem \ref{s5-thm}),
confirming Witten's expectation.

More precisely, the contribution of genus $g$ to the vacuum amplitude
of type II superstring theory is given as the integral of the supermeasure
(section of the Berezinian of the cotangent bundle) over a suitable cycle
in $\SS^+\times\ov{\SS^+}$
(where the bar denotes the complex conjugate space).
This supermeasure can be expressed in
holomorphic terms in terms of the super analogue of the Mumford
isomorphism, constructed in \cite{Voronov}, see
\cite{RoslySchwarzVoronov1989}.  The relevant special case of the
Mumford isomorphism is a canonical isomorphism
$$\operatorname{Ber}^5(R\pi_*\omega_{X/\SS})
\cong\operatorname{Ber}(T^*_\SS)$$ 
of the fifth tensor power of the
Berezinian of the derived direct image
of the relative dualizing sheaf (Berezinian)
$\omega_{X/\SS}$ of the universal supercurve $\pi\colon X\to\SS$ with the
Berezinian of the holomorphic cotangent bundle.

The supermeasure is defined on a certain open subset of the product
$\SS^+\times \ov{\SS^+}$.  In the corresponding reduced space
$\SS_{\bos}^+\times \ov{\SS_{\bos}^+}$ we have a natural locus of pairs of
curves with theta-characteristics $((C,L),(C',L'))$ such that $C=C'$,
which we call the {\it quasi-diagonal}.  The supermeasure can be
defined as a meromorphic section of the holomorphic Berezinian on a
neighborhood of the quasi-diagonal in $\SS^+\times\ov{\SS^+}$.  It is
the image of the canonical section of
$$\operatorname{Ber}^5(R\pi_*\omega_{X/\SS})\boxtimes
\operatorname{Ber}^5(R\pi_*\omega_{\ov{X}/\ov{\SS}})$$ 
induced by the
embedding of $\pi_*\omega_{X/\SS}$ into the local system
$R^1\pi_*\pi^{-1}\mathcal O_{\SS}$, which has a canonical pairing.
The possible problem with the regularity occurs when the higher direct
images $R^i\pi_*\omega_{X/\SS}$ are not locally free. This happens
near points $(C,L)$ such that $H^0(C,L)\neq 0$. We show in Theorem
\ref{s5-thm} that the supermeasure extends to a regular section for
$g\le 11$.

Note that if we view $\SS^+\times\ov{\SS^+}$ as a real supermanifold
then a section of the holomorphic Berezinian can be viewed as a codimension $6g-6|0$ closed integral form on it.
Thus, we have to extend the quasi-diagonal to a sub-supermanifold $\Delta $ in $\SS^+\times\ov{\SS^+}$ of real dimension $6g-6|4g-4$.
We can integrate over $\Delta $ the restriction of the holomorphic Berezinian provided the supermeasure is regular in its neighborhood. The integral does not depend on a choice of $\Delta $ since the integral form is closed 
(there are convergence issues at infinity, which we do not consider in this paper).


In the second part of the paper we study the superperiod map at the generic point of the
component $\SS^+$.
The superperiod map gives rise to a natural {\it super-Schottky ideal} $\II_{s-Sch}$ on the period domain,
which is containd in the usual Schottky ideal $\II_{Sch}$ describing the image of the classical period map.
The problem of studying $\II_{s-Sch}$ was raised in \cite[Rem.\ 6.10]{CV}.
Our second main result concerns the minimal power $d$ such that
$$\II_{Sch}^d\sub \II_{s-Sch}.$$ 
It is easy to see that $d\le g$. 
We prove that for $g$ odd, one has $d=g$, while for $g$ even, $d\ge g-1$.

The paper is organized as follows: in Section \ref{s-2} we review some
standard facts about supercurves and their moduli. In Section
\ref{loc-free-sec} we study the direct image $\pi_*\om_{X/S}$ and the higher direct image
$R^1\pi_*\om_{X/S}$ of the relative
Berezinian of the universal supercurve and show that they are locally
free on the open set where the even theta-characteristic has no
nontrivial sections and are not locally free away from it, as well as on the component of
moduli space with odd theta-characteristic. In the following Section
\ref{s-5} we study the behaviour of the period matrix as we approach
the divisor of even theta-characteristics with nontrivial
sections. The key idea is to realize the symplectic local system
$R^1\pi_*\mathbb C_{X/S}$, for a family of supercurves
$\pi\colon X\to S$, where $\mathbb C_{X/S}=\pi^{-1}\mathcal O_S$, as a symplectic reduction of a bigger symplectic
bundle $\VV$ with a Lagrangian subbundle $L_{\mathrm{can}}$ whose
reduction is $\pi_*\omega_{X/S}$. Away from the divisor of even
theta-characteristics with nontrivial sections, the Lagrangian
subbundle $\pi_*\omega_{X/S}$ of $R^1\pi_*\mathbb C_{X/S}$ is locally the graph of the
period matrix, a map $\Omega \colon\Lambda'\to \Lambda$ for some
Lagrangian splitting $R^1\pi_*\mathbb C_{X/S}=\Lambda\oplus
\Lambda'$. The singularities of $\Omega$ are governed by the map
$L_\Lambda\to \mathcal V/L_{\mathrm{can}}$ where the Lagrangian
subbundle $L_\Lambda\subset \mathcal V$ projects to $\Lambda$.  The
period matrix has a pole on the locus where this map fails to be an
isomorphism. We show that this locus is locally the zero set of a
Pfaffian $f$ and that $f\Omega$ is regular.  This result allows us to
compute the behavior of the supermeasure as we approach the locus,
leading to our regularity result.  In the remaining sections we study
the second variation of the period matrix in the odd directions at a
generic point of the moduli space of supercurves with even theta
characteristic. An analytic formula for the second variation was given
by D'Hoker and Phong in \cite{DHokerPhong1989} in terms of the Szeg\"o
kernel. We present in Section \ref{s-6} an alternative approach by
expressing the second variation as a Massey product.  We study this
Massey product in Section \ref{s-7} giving explicit formulas, in
particular for hyperelliptic curves. This formula is then used in
Section \ref{s-8} to prove our result on the super-Schottky ideal.

\subsection*{Conventions} 
We work with either complex algebraic or analytic superspaces. 
Whenever we work with the sheaf $\C_{X/S}$, we use classical topology and analytic framework.
For a superscheme $S$ we denote by $|S|$ the underlying topological space and by $S_{\bos}$ its bosonization, which is a usual scheme with
the underlying topological space $|S|$ and the structure sheaf $\OO_S/(\OO_S\cdot\OO_S^-)$ (the quotient by the ideal generated by odd functions).
By a {\it curve} we always mean a connected smooth projective curve over $\C$. When considering moduli spaces of curves we 
always assume that the genus $g$ of a curve is $\ge 2$.
By a {\it supercurve} we always mean a compact complex supermanifold  (or a proper smooth algebraic supervariety)
of dimension $1|1$ with a superconformal
structure (see Section \ref{ber-supercurve-sec} for details). These are often called {\it super Riemann surfaces}.
For a family of smooth curves $C/S$ (resp., supercurves $X/S$) we denote by $\om_{C/S}$ (resp., $\om_{X/S})$
the relative dualizing sheaf (given by the relative Berezinian in the supercase).
Since we work in the super-context throughout the paper, we drop 
the prefix ``super"  in {\it supervector bundles} and {\it supervector spaces}.

\subsection*{Acknowledgements}
G.F. is partially supported by the National Centre of
Competence in Research ``SwissMAP --- The Mathematics of Physics'' of the
Swiss National Science Foundation. He thanks the Hebrew University of Jerusalem, where part of this work was done, for hospitality.
D.K. is partially supported by the ERC under grant agreement 669655.
A.P. is partially supported by the NSF grant DMS-2001224, 
by the National Center of Competence in Research ``SwissMAP --- The Mathematics of Physics'' of the Swiss National Science Foundation, and 
by the Russian Academic Excellence Project `5-100' within the framework of the HSE University Basic Research Program.
While working on this project, A.P. was visiting ETH Zurich and Hebrew University of Jerusalem. 
He would like to thank these institutions for hospitality and excellent working conditions.

\section{Supercurves}\label{s-2}
In this section we review some standard material on supercurves (also
called super Riemann surfaces).

\subsection{Berezinian of a supercurve}\label{ber-supercurve-sec}

Let $\pi\colon X\to S$ be a smooth proper supercurve over a superscheme $S$. By
definition the tangent sheaf $T_{X/S}$ has a locally free
$\mathcal O_X$-submodule ${\mathcal D}$ of rank $(0,1)$ such that the composition
of the canonical projection with the Lie bracket
${\mathcal D}\otimes {\mathcal D}\to T_{X/S}/{\mathcal D}$, which is a map of $\mathcal O_X$-modules,
is an isomorphism. Thus, we have an exact sequence
\begin{equation}\label{e-00}
  0\to {\mathcal D}\to T_{X/S}\to {\mathcal D}^{\otimes 2}\to 0
\end{equation}
The dual sequence is
\begin{equation}\label{e-01}
  0\to ({\mathcal D}^\vee)^{\otimes 2}\to \Omega_{X/S}^1\to {\mathcal D}^\vee\to 0
\end{equation}
and taking Berezinians shows that
${\mathcal D}^\vee\cong \omega_{X/S}:=\operatorname{Ber}(\Omega_{X/S}^1)$.


Below we work with sheaves in the classical topology.  

The map $\Omega_{X/S}^1\to {\mathcal D}^\vee$ defines an
$\mathcal O_S$-linear derivation
\[
  \delta\colon \mathcal O_X\to \omega_{X/S}.
\]
The map $\de$ is surjective and its kernel is the sheaf $\mathbb C_{X/S}=\pi^{-1}\mathcal O_S$ of 
functions that are locally constant along the fibers.

Locally there exist fiber coordinates $z,\theta$ such that $\mathcal D$
is spanned by $D=\partial_\theta+\theta\partial_z$. In these coordinates
the map $\delta$ is $f\mapsto [dz|d\theta]D(f)$ where $[dz|d\theta]$ is
the local section of the Berezinian determined by the basis $dz,d\theta$ of $\Omega^1_{X/S}$.

The long exact sequence associated with
\begin{equation}\label{const-sh-resolution}
  0\to \mathbb C_{X/S}\to \mathcal O_X\rTo{\de} \omega_{X/S}\to 0
\end{equation}
includes 
\begin{equation}\label{e-les}
  \pi_*\omega_{X/S}\to R^1\pi_*\mathbb C_{X/S}\to R^1\pi_*\mathcal O_X\to R^1\pi_*\omega_{X/S}\to
  R^2\pi_*\mathbb C_{X/S}\to 0,
\end{equation}
(since $R^2\pi_*\mathcal O_X=0$).  Locally we have
$R^i\pi_*\mathbb C_{X/S}\cong H^i(|X_0|,\mathbb C)\otimes \mathcal O_S$,
where $|X_0|$ is the underlying topological space of a fibre
$X_0$. Thus, $R^1\pi_*\mathbb C_{X/S}\cong\mathcal O_S^{\oplus 2g}$ and
$R^2\pi_*\mathbb C_{X/S}\cong\mathcal O_S$. In fact, the latter isomorphism is global, and we have in particular a
surjective trace map
 \begin{equation}\label{Ber-curve-trace-map}
   \tau \colon R^1\pi_*\omega_{X/S}\to \mathcal O_S
 \end{equation}
that plays a role in duality theory (see Sec.\ \ref{hodge-duality-sec} below).

\subsection{Supercurves over an even base}\label{supercurves-even-base-sec}

Assume that $S$ is even. Then the decomposition of $\OO_X$ into the even and odd components gives a splitting of $X$:
$$\mathcal O_X\cong{\bigwedge}^\bullet_{\mathcal O_C}L=\mathcal O_C\oplus L$$
where $L$ is a relative theta-characteristic on the underlying usual family of curves $C\to
S$, with a fixed isomorphism
\begin{equation}\label{theta-char-str-map}
L\otimes L\to\omega_{C/S}.
\end{equation}
The dualizing sheaf $\omega_{X/S}$ is $\omega_C\oplus L$, with
the $\mathcal O_X$-module structure such that $L$ acts by zero
on $\omega_C$ and its action on $L$ is given by the structure map \eqref{theta-char-str-map}.
The derivation $\delta$ is $(d,\mathrm{id})$.  The distribution ${\mathcal D}$ is
the orthogonal complement of $\mathrm{Ker}(\bar\delta)$ where
$\bar\delta\colon\Omega^1_{X/S}\to \omega_{X/S}$
is the map of $\mathcal O_X$-modules associated with $\delta$.

\begin{lemma}
Let $\theta$ be a local nowhere vanishing section of $L$, and let $\alpha=\theta^2\in\omega_{C/S}$
be the image of $\theta\otimes \theta$ under the structure map \eqref{theta-char-str-map}. Then
\[
{\mathcal D}=\mathrm{Ker}(\alpha-\theta d\theta),
\]
where we use the natural embedding of $\omega_{C/S}=\Omega_{C/S}$ into $\Omega_{X/S}$.
\end{lemma}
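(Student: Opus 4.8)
The statement is local on $X$, so the plan is to fix splitting coordinates: an (étale-)local coordinate $z$ on $C$ together with the nowhere-vanishing odd section $\theta$ of $L$, which furnish relative coordinates on $X$ making $\Omega^1_{X/S}$ free on the even generator $dz$ and the odd generator $d\theta$. Since $\alpha$ is a nowhere-vanishing section of $\omega_{C/S}=\Omega^1_{C/S}$, I would write $\alpha=a\,dz$ with $a\in\OO_C$ invertible, so that under the natural embedding $\omega_{C/S}\hookrightarrow\Omega^1_{X/S}$ the section in question becomes $\alpha-\theta\,d\theta=a\,dz-\theta\,d\theta$. Recall that by definition $\mathcal D=\mathrm{Ker}(\bar\delta)^{\perp}$ is the annihilator, under the pairing $\lan\,,\,\ran\colon T_{X/S}\ot\Omega^1_{X/S}\to\OO_X$, of the subsheaf $\mathrm{Ker}(\bar\delta)\subset\Omega^1_{X/S}$. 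The strategy is to show that $\mathrm{Ker}(\bar\delta)$ is the \emph{cyclic} $\OO_X$-submodule generated by the single even section $\alpha-\theta\,d\theta$; the lemma then follows at once, because the annihilator of a cyclic module is the kernel of pairing with its generator (using $\lan v,c\omega\ran=\pm\,c\lan v,\omega\ran$ and setting $c=1$).

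First I would evaluate $\bar\delta$ on the basis. Since $\delta=\bar\delta\circ d$ and the splitting identifies $\delta=(d,\mathrm{id})$ on $\OO_X=\OO_C\oplus L\to\omega_{X/S}=\omega_C\oplus L$, taking $z$ and $\theta$ as inputs gives $\bar\delta(dz)=\delta(z)=\ov{dz}\in\omega_C$ and $\bar\delta(d\theta)=\delta(\theta)=\theta\in L$. (As a check, $\OO_X$-linearity together with the fact that $L$ annihilates $\omega_C$ reproduces $\delta(g\theta)=g\theta$ for $g\in\OO_C$, matching $(d,\mathrm{id})$.)

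Next I would compute the kernel directly. Writing $\omega=p\,dz+q\,d\theta$ with $p=p_0+p_1\theta$ and $q=q_0+q_1\theta$ (all $p_i,q_i\in\OO_C$), and using that $L$ acts by zero on $\omega_C$ and by the structure map $L\ot L\to\omega_C$, $\theta\cdot\theta=\alpha$, on $L$, one gets
\[
\bar\delta(\omega)=(p_0\,\ov{dz}+q_1\alpha)+q_0\theta\in\omega_C\oplus L .
\]
Thus $\bar\delta(\omega)=0$ forces $q_0=0$ and $p_0=-q_1a$, so $\mathrm{Ker}(\bar\delta)$ is the free $\OO_C$-module on $\alpha-\theta\,d\theta$ and on $\theta\,dz$. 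The one genuinely delicate point is the interaction of the two distinct ``squares'' of $\theta$: in the ring $\OO_X$ one has $\theta\cdot\theta=0$ (since $\wedge^2L=0$), whereas under the theta-characteristic structure map $\theta^2=\alpha$. Using the former inside $\Omega^1_{X/S}$ one computes $\theta\cdot(\alpha-\theta\,d\theta)=a\,\theta\,dz-\theta^2\,d\theta=a\,\theta\,dz$, and since $a$ is invertible this exhibits $\theta\,dz\in\OO_X\cdot(\alpha-\theta\,d\theta)$. Hence $\mathrm{Ker}(\bar\delta)=\OO_X\cdot(\alpha-\theta\,d\theta)$ is cyclic, and taking annihilators yields $\mathcal D=\mathrm{Ker}(\alpha-\theta\,d\theta)$.

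I expect the main obstacle to be purely a matter of bookkeeping: keeping the two meanings of $\theta^2$ rigorously apart, tracking the $\OO_X$-module structure on $\omega_{X/S}$ (in which $L$ kills $\omega_C$ and multiplies $L$ into $\omega_C$), and carrying the Koszul signs in the pairing and in the derivation. As a sanity check one sees that when the theta-characteristic is normalized so that $a=1$, the form $\alpha-\theta\,d\theta$ specializes to the classical contact form $dz-\theta\,d\theta$ whose kernel is spanned by $\partial_\theta+\theta\partial_z$, the standard local generator of the superconformal distribution, in agreement with the coordinate description of $\mathcal D$ given above.
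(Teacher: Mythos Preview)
Your proof is correct and follows essentially the same approach as the paper: both verify that $\alpha-\theta\,d\theta$ lies in $\mathrm{Ker}(\bar\delta)$ by evaluating $\bar\delta$ on the local basis $dz,d\theta$ and using the $\OO_X$-module structure on $\omega_{X/S}$, then identify $\mathcal D$ as the annihilator of this generator. The only minor difference is that the paper invokes the a priori fact that $\mathrm{Ker}(\bar\delta)\cong(\mathcal D^\vee)^{\otimes 2}$ is a rank~$1|0$ line bundle, so a single nowhere-vanishing section automatically generates it, whereas you instead compute the kernel explicitly and exhibit $\theta\,dz=a^{-1}\theta\cdot(\alpha-\theta\,d\theta)$ to show cyclicity by hand; both routes are equally valid.
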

\begin{proof}
The section $\alpha-\theta d\theta$ of $\Omega_{X/S}$ is nowhere vanishing.
We need to check that it belongs to $\mathrm{Ker}(\bar\delta)$. By definition $\bar\delta$ is the map of
$\mathcal O_X$-modules sending an exact form $d(f\oplus \psi)$ to
$df\oplus \psi$. Thus, $\bar\delta(\theta d\theta)=\theta^2$. 
On the other hand, since $\alpha\in \Omega_{C/S}$, we have 
$\bar\delta\alpha=\alpha$, and our assertion follows.
\end{proof}   

\subsection{Local automorphisms}
Let $X_0\to S_0$ be a smooth supercurve over an even reduced superscheme 
$S_0$. Let $S_0\hookrightarrow S$ be a closed embedding into a
superscheme corresponding to the nilpotent ideal $I_S$ of
$\mathcal O_S$ (so that $S_0=S_{\mathrm{red}}$).  Let
$\pi\colon X\to S$ be a smooth supercurve such that
$X|_{S_0}=X\times_{S}S_0\cong X_0$. The $S$-automorphisms of the
supercurve $X$ that induce the identity on $X_0$ act as the identity
on the underlying topological space of $X$. They are thus sections of
a sheaf $\mathrm{Aut}^0_{X/S}$ of unipotent groups. The Lie algebra
$\mathfrak g_{X/S}$ of $\mathrm{Aut}^0_{X/S}$ consists of even
superconformal vector fields vanishing on $X_0$.  It is a sheaf of
nilpotent graded Lie algebras on $X$ over $\pi^{-1}\mathcal O_S$.

\subsection{Extensions along odd moduli}\label{ss-odd moduli}
Suppose again that a smooth supercurve $X_0\to S_0$ over an even
reduced superscheme is given. Up to isomorphism this is the same as giving
an ordinary curve over $S_0$ with a theta-characteristic. For each $S$
with $S_{\mathrm{red}}=S_0$ we ask about
smooth supercurves $X\to S$ restricting to $X_0$ over $S_0$.

\begin{definition} 
  An {\it extension of $X_0\to S_0$ along $S$ } is a supercurve $X$ over $S$
  equipped with 
  an isomorphism
  $X\times_{S_0}S\cong X_0$ as supercurves over $S_0$.  Two extensions
  $X,X'$ along $S$ are isomorphic if there is an isomorphism $X\to X'$
  of supercurves over $S$ inducing the identity on $X_0$.
\end{definition}  

Working locally on smooth $S$, we may assume that $S=(|S_0|,\mathcal O_S)$
with $\mathcal O_S=\mathcal O_{S_0}\otimes_{\mathbb C} \bigwedge W$
for some finite dimensional vector space $W$.  Then we get an
extension $X\to S$, called the trivial extension,
by setting $X=(|X_0|,\mathcal O_X)$ and
$\mathcal O_X=\mathcal O_{X_0}\otimes\bigwedge W$. Any other extension
along $S$ is obtained from the trivial extension by twisting by a
$1$-cocycle.

\begin{lemma}\label{l-charclass} (cf. \cite[Lem.\ 2.3]{LeBrunRothstein1988})
  Let $X$ be an extension of $X_0$ along $S$. Then the isomorphism
  classes of extensions of $X_0$ along $S$ are in natural one to one
  correspondence with the first non-abelian cohomology
  $H^1(X,\operatorname{Aut}^0_{X/S})$.
\end{lemma}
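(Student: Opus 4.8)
The plan is to recognize the statement as the standard classification of twisted forms by non-abelian cohomology, with the given extension $X$ serving as the base point. Since every element of $\operatorname{Aut}^0_{X/S}$ induces the identity on the underlying space $|X|=|X_0|$, the assignment $U\mapsto\operatorname{Aut}^0_{X/S}(U)$ is a sheaf of (unipotent) groups on $|X_0|$, and $H^1(X,\operatorname{Aut}^0_{X/S})$ denotes its first \v{C}ech cohomology, a pointed set. I will produce a natural bijection between isomorphism classes of extensions and isomorphism classes of torsors under $\operatorname{Aut}^0_{X/S}$, and then invoke the fact that the latter are classified by $H^1$.

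To a second extension $X'$ I associate the sheaf $\operatorname{Isom}^0(X,X')$ on $|X_0|$ whose sections over $U$ are the isomorphisms $X|_U\to X'|_U$ of supercurves over $S$ inducing the identity on $X_0$. Composition makes this a right $\operatorname{Aut}^0_{X/S}$-set, and the action is visibly free and transitive wherever $\operatorname{Isom}^0(X,X')$ is non-empty; thus, once local non-emptiness is established, $\operatorname{Isom}^0(X,X')$ is a torsor. Conversely, from a torsor $P$ one reconstructs an extension by twisting $X$: one reglues the local pieces $X|_{U_i}$ along a cocycle trivializing $P$, the regluing being by elements of $\operatorname{Aut}^0_{X/S}$, which preserve the superconformal distribution $\mathcal D$ and act trivially on $|X_0|$, so that both the supercurve structure and the identification over $X_0$ descend. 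These two constructions are mutually inverse up to isomorphism, and $X$ itself corresponds to the trivial torsor, i.e.\ the base point; naturality with respect to base change is immediate since all the constructions commute with pullback.

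The one geometric input, and the main obstacle, is local triviality: every extension $X'$ is, over a sufficiently small open $U\subset|X_0|$, isomorphic to $X|_U$ by an isomorphism inducing the identity on $X_0$. I would prove this by induction on the order of nilpotency of the ideal $I_S$, filtering $\mathcal O_S$ by the powers of $I_S$ so as to reduce to a single square-zero step with ideal $J$ (so $J^2=0$ and $J$ is a module over the previous quotient). In such a step, given a chosen isomorphism over the smaller thickening, the obstruction to extending it to an isomorphism $X|_U\to X'|_U$ over $S$, as well as the ambiguity in such an extension, is measured by the cohomology of the sheaf of superconformal vector fields of $X_0$ tensored with $J$; concretely this is the associated graded of $\mathfrak g_{X/S}$, a coherent sheaf on the curve $|X_0|$. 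Since $U$ is Stein in the analytic setting (respectively affine in the algebraic one) and the sheaf is coherent on a one-dimensional space, the relevant $H^1(U,-)$ vanishes, so the isomorphism extends; running the induction yields $X'|_U\cong X|_U$.

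Finally I would assemble these ingredients. Local triviality guarantees that $\operatorname{Isom}^0(X,X')$ is indeed a torsor for every $X'$, so the first two paragraphs give a bijection between isomorphism classes of extensions and isomorphism classes of $\operatorname{Aut}^0_{X/S}$-torsors; the classical identification of the set of such torsors with $H^1(X,\operatorname{Aut}^0_{X/S})$ then completes the proof. The remaining bookkeeping, that isomorphic extensions yield cohomologous cocycles and that changing the local trivializations alters the cocycle by a coboundary, is routine and parallels the abelian case.
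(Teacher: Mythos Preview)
Your proposal is correct and follows essentially the same approach as the paper: both arguments classify extensions as twisted forms of the chosen $X$ via \v{C}ech cocycles/torsors for $\operatorname{Aut}^0_{X/S}$, with local triviality as the only nontrivial input. The paper simply asserts that all extensions are locally isomorphic, whereas you supply the standard obstruction-theoretic justification by induction on the nilpotency order of $I_S$; otherwise the two proofs coincide.
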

\begin{proof}
  All extensions are locally isomorphic. Thus, every point $x\in |X_0|$
  has a neighbourhood $U\subset |X_0|$ so that any extension of
  $(U,\mathcal O_{X_0}|_U)$ along $S$ is isomorphic to
  $(U,\mathcal O_X|_{U})$. Hence, for any extension $X'$ we can find an open
  covering $\mathcal U=(|U_i|)$ of $|X_0|$ with corresponding
  supercurves $U_i=(|U_i|,\mathcal O_X|_{|U_i|})$, and isomorphisms
  $\varphi_i\colon U_i\to U_i'$. On non-empty intersections
  $|U_i|\cap |U_j|$ we get automorphisms
  $\varphi_{ij}=\varphi_{j}^{-1}\circ
  \varphi_i\in\mathrm{Aut}^0_{X/S}(U_i\cap U_j)$ defining a
  $1$-cocycle with values in $\mathrm{Aut}^0_{X/S}$. If $\varphi_i'$
  are a different choice of local isomorphisms then
  $\varphi_i'=\varphi_i\circ f_i$ with $f_i\in\mathrm{Aut}_{X/S}^0$
  and $\varphi_{ij}$ is replaced by the equivalent cocycle
  $f_j^{-1}\circ \varphi_{ij}\circ f_i$. Thus, to every extension $X'$
  we associate a well-defined characteristic class
  $[\varphi_{ij}]\in H^1(X,\operatorname{Aut}_{X/S}^0)$. Conversely,
  given a class in $H^1(X,\operatorname{Aut}_{X/S}^0)$ represented by
  a \v Cech cocycle $(\varphi_{ij})$ we obtain an extension $X'$ by
  gluing $\sqcup U_i$ over $|U_i|\cap |U_j|$ by the isomorphisms
  $\varphi_{ij}$. By construction, the characteristic class of $X'$ is
  $[\varphi_{ij}]$.
\end{proof}

\subsection{Local description of $\operatorname{Aut}^0_{X/S}$}
We give a local description of $\operatorname{Aut}^0_{X/S}$
(cf.~\cite{CraneRabin1988} and \cite[Section 2.1.1]{Witten}).
We can choose local fibre coordinates $z,\theta$ on an open
set $U\sub X$ so that the distribution ${\mathcal D}$ defining the supercurve is
spanned by $\partial_\theta+\theta\partial_z$ or, equivalently, is the
kernel of $dz-\theta d\theta$. 

We are going to construct elements of $\Gamma(U,\operatorname{Aut}^0_{X/S})$ of two types.  
For $f=f(z)\in\mathcal O_X^+(U)$, an even function such that
$\partial_\theta f=0$ and $f\equiv z\mod I_S$, we define a
superconformal map $S_f$ by
\begin{equation}\label{e-DefS}
z\mapsto S_f^*z=f(z),\quad \theta\mapsto S_f^*\theta=\sqrt{f'(z)}\,\theta,
\end{equation}
where $f'=\partial_zf$.
Note that the square root is well-defined since $f'(z)=1+$nilpotent.
On the other hand, for $\varphi=\varphi(z)\in \mathcal O_X^-(U)$, an odd function 
such that $\partial_\theta\varphi=0$, we define $T_\varphi$ by
\begin{equation}\label{e-DefT}
z\mapsto T_\varphi^*z=z+\theta\varphi(z),\quad 
\theta\mapsto T_\varphi^*\theta=\theta+\varphi(z)+\textstyle{\frac12}\,\theta\,\varphi(z)\,\varphi'(z).
\end{equation}

\begin{lemma}
The maps $S_f,T_\varphi$ are in $\Gamma(U,\operatorname{Aut}^0_{X/S})$
and every element of this group can be uniquely written in the form $S_f\circ T_\varphi$.
\end{lemma}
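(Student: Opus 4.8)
The plan is to reduce both assertions to the coordinate criterion for superconformality together with a triangular system for the two components of an automorphism. Write $D=\partial_\theta+\theta\partial_z$ for the generator of $\mathcal D$ and, for a fibrewise map $\phi$, set $\tilde z=\phi^*z$, $\tilde\theta=\phi^*\theta$. The criterion I would use is that $\phi$ preserves $\mathcal D=\ker(dz-\theta d\theta)$ if and only if
\[
D\tilde z=\tilde\theta\,D\tilde\theta ,
\]
which follows from the change-of-coordinate identity $D=(D\tilde\theta)\,\tilde D+(D\tilde z-\tilde\theta\,D\tilde\theta)\,\partial_{\tilde z}$, where $\tilde D=\partial_{\tilde\theta}+\tilde\theta\partial_{\tilde z}$: the vector field $D$ lies in the new distribution exactly when the $\partial_{\tilde z}$-term vanishes. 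To belong to $\operatorname{Aut}^0_{X/S}$ the map must in addition reduce to the identity modulo the nilpotent ideal $I_S$, and any superconformal endomorphism that does so is automatically invertible (unipotence), so it suffices to check these two conditions.

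First I would verify the two types directly. For $S_f$ one finds $D\tilde z=\theta f'$ and $\tilde\theta\,D\tilde\theta=\sqrt{f'}\,\theta\cdot\sqrt{f'}=f'\theta$, so the criterion holds; as $f\equiv z$ and $\sqrt{f'}\equiv 1\bmod I_S$, the map reduces to the identity. For $T_\varphi$ a short expansion using $\theta^2=\varphi^2=0$ gives $D\tilde z=\theta+\varphi=\tilde\theta\,D\tilde\theta$; and since $\varphi$ is odd and $\theta$-independent while the reduced base $S_0$ is even, $\varphi\in I_S\mathcal O_X$, so $T_\varphi$ also reduces to the identity.

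For the factorization I would first record the composite, expanding $f(z+\theta\varphi)$ and $\sqrt{f'(z+\theta\varphi)}$ to first order in the nilpotent $\theta\varphi$:
\[
(S_f\circ T_\varphi)^*z=f+\theta f'\varphi,\qquad
(S_f\circ T_\varphi)^*\theta=\sqrt{f'}\,\varphi+\theta\sqrt{f'}\bigl(1+\tfrac12\varphi\varphi'\bigr).
\]
Now take an arbitrary $\phi\in\Gamma(U,\operatorname{Aut}^0_{X/S})$ and write $\tilde z=a+\theta b$, $\tilde\theta=c+\theta e$ with $a,e$ even, $b,c$ odd, all $\theta$-independent, and $a\equiv z$, $e\equiv1$, $b,c\equiv0\bmod I_S$. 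The superconformal identity becomes the two relations $b=ce$ and $a'=e^2-cc'$. Comparing with the composite forces $f=a$ and, since $a'\equiv1$ is invertible, $\varphi=c/\sqrt{a'}$; this already yields uniqueness.

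It remains to check that these choices reproduce the $b$- and $e$-components of $\phi$ as well, and this is the one genuinely delicate point: the hard part is to see that the two superconformal constraints are exactly what force the ``diagonal'' data $(f,\varphi)$ read off from $a$ and $c$ to recover the ``off-diagonal'' data $b,e$. Here I would exploit $c^2=(cc')^2=0$: from $a'=e^2-cc'$ one gets $e=\sqrt{a'}\bigl(1+\tfrac{cc'}{2a'}\bigr)$, hence $c(e-\sqrt{a'})=0$ and so $b=ce=\sqrt{a'}\,c=a'\cdot c/\sqrt{a'}=f'\varphi$, matching the composite; and $\varphi\varphi'=cc'/a'$ gives $\sqrt{f'}\bigl(1+\tfrac12\varphi\varphi'\bigr)=e$. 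Therefore $S_f\circ T_\varphi=\phi$, which completes both existence and uniqueness.
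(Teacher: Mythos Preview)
Your proof is correct and follows essentially the same route as the paper: write a general element in local coordinates, impose the superconformal constraint, and solve. The only cosmetic differences are that you use the equivalent criterion $D\tilde z=\tilde\theta\,D\tilde\theta$ in place of the paper's $g^*(dz-\theta\,d\theta)=\lambda(dz-\theta\,d\theta)$, and you parametrize via $\varphi=c/\sqrt{a'}$ whereas the paper sets $g_1=f'\varphi$; these are the same substitution read in opposite directions.
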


\begin{proof} A superconformal transformation is an automorphism such
  that $g^*(dz-\theta d\theta)=\lambda(dz-\theta d\theta)$ for some
  function $\lambda$.  Let $g^*z=g_0(z)+\theta g_1(z)$ and
  $g^*\theta=h_0(z)+\theta h_1(z)$, with $g_0,h_1$ even and $g_1,h_0$
  odd. The condition that $g$ reduces to the identity on $X_0$ is
  $g_0(z)=z+$nilpotent and $h_1(z)=1+$nilpotent.  Thus $g$ is
  superconformal if and only if
  \[
    h_0(z)=\frac{g_1(z)}{h_1(z)}, \quad
    h_1(z)^2=g'_0(z)+\frac{g_1(z)g_1'(z)}{g_0'(z)}.
  \]
  In this case $\lambda=g_0'(z)+h_0(z)h_0'(z)+2\theta h_0(z)h_1'(z)$.
  Since $g_0'(z)=1+$nilpotent, we can take the square root:
  \[
    h_1(z)=\pm\sqrt{g_0'(z)}\left(1+\frac{g_1(z)g_1'(z)}{2g'_0(z)^2}\right)
  \]
  The condition that $g$ restricts to the identity on $X_0$ selects
  the positive sign.  This gives
  \[
    g^*z=g_0(z)+\theta g_1(z),\quad g^*\theta=\sqrt{g_0'(z)}
    \left(\frac{g_1(z)}{g_0'(z)}
      +\theta\left(1+\frac{g_1(z)g_1'(z)}{2g'_0(z)^2}\right)\right).
  \]
  If we set $g_0(z)=f(z),g_1(z)=f'(z)\varphi(z)$, the formulas
  simplify slightly:
  \[
    g^*z=f(z)+\theta f'(z)\varphi(z)\quad
    g^*\theta=\sqrt{f'(z)}\left(\varphi(z)+\theta(1+\textstyle{\frac12}
      \varphi(z)\varphi'(z))\right).
  \]
  This reduces to $S_f^*$ for $\varphi=0$ and to $T^*_\varphi$ for
  $f(z)=z$. For general $f,\varphi$ we get
  $g^*=T_\varphi^*\circ S_f^*$.
\end{proof}
\begin{remark}
  The automorphism $T_\varphi$ is the exponential of an odd
  superconformal vector field (an odd vector field such that
  $[v,\mathcal D]\subset \mathcal D$]). Namely
  $T_\varphi=\exp(v(\varphi))$ with
  \[
    v(\varphi)
    =\theta\varphi\partial_z+\varphi\partial_\theta.
  \]
\end{remark}
\subsection{Coordinate free formulation}\label{s-coord}
The local generators of the sheaf of groups $\mathrm{Aut}^0_{X/S}$ of
fibre automorphisms restricting to the identity on $X_{\mathrm{red}}$
are described above using local coordinates, but we only need a
local splitting of $X$ to define them. Let
$X|_U\cong(U,\mathcal O_C\oplus L)$ be a splitting of $X\to S$ on some
open set $U$.  Here $C$ is an ordinary curve over $S$ with theta
characteristic $L$ and a section $f\oplus\psi$ with $\psi=\theta g$
for a local basis $\theta$ of the $\mathcal O_C$ module $L$
corresponds to $f(z)+\theta g(z)$ in the local coordinate
description. Then it is clear how to interpret \eqref{e-DefS},
\eqref{e-DefT} except possibly for the term $\varphi(z)\varphi'(z)$ in
\eqref{e-DefT}:
\begin{itemize}
\item $f$ is a local automorphism of $C$ which is the identity modulo
  the nilpotent ideal $I_S$ of $S$ and $S^*_f$ is its natural action
  on $\mathcal O_C\oplus L$. It preserves the splitting.
\item $\varphi$ is an odd section of $L^{-1}$ and $T^*_\varphi$ acts
  on $\mathcal O_X\oplus L$ as
  \[
    g\oplus\psi\to (g+\varphi\,\psi)\oplus
    (\psi+\varphi\,dg+\textstyle{\frac12Q}(\varphi)\psi),
  \]
  for a quadratic form $Q\colon L^{-1}\to \mathcal O_C$ with local
  coordinate expression $\varphi\varphi'$ to be described below. The
  product is in the algebra $\oplus_{j\in\mathbb Z} L^j$: for example
  $\varphi\,dg$ is the product of the section $\varphi$ of $ L^{-1}$
  and $dg$ of $\omega_C\cong L^2$; the result is a section of $L$.
\end{itemize}

It remains to describe the quadratic form $Q$.

  \begin{lemma} \label{l-w} Let $L$ be a theta-characteristic on $C\to S$.
    There is a unique map of sheaves of vector spaces
    \[
    w\colon L^{-1}\otimes_{\mathbb C}L^{-1}\to \mathcal O_C
    \]
    such that
    \begin{enumerate}
    \item $w(a,b)=-(-1)^{p(a)\,p(b)}w(a,b)$,
    \item $w(fa,b)=fw(a,b)+\langle d_{C/S}f, a\otimes b\rangle$,
    \end{enumerate}
    for all local sections $a,b\in L^{-1}$, of parity $p(a),p(b)$ and
    $f\in \mathcal O_C$. Here
    $\langle\;,\;\rangle$ is the canonical pairing
    $\omega_C\otimes (L^{-1}\otimes L^{-1})\to \mathcal O_C$ induced
    by the structure map $L^2\to\omega_{C/S}$.
  \end{lemma}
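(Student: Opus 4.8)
The plan is to establish uniqueness first and then to construct $w$ locally, letting uniqueness take care of the gluing. For uniqueness, suppose $w$ and $w'$ both satisfy (1) and (2) and put $\delta=w-w'$. The inhomogeneous terms in (2) cancel, so $\delta(fa,b)=f\,\delta(a,b)$ for $f\in\mathcal O_C$; feeding this into the graded antisymmetry (1) gives likewise $\delta(a,fb)=f\,\delta(a,b)$, so $\delta$ is $\mathcal O_C$-bilinear and graded antisymmetric. Since $L^{-1}$ is locally free of rank one, it has a local even generator $s$, and (1) forces $\delta(s,s)=-\delta(s,s)$, hence $\delta(s,s)=0$; by $\mathcal O_C$-bilinearity $\delta$ vanishes identically. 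Thus $w$ is unique wherever it exists, and in particular any two local solutions agree on overlaps.

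For existence it is therefore enough to build $w$ on the members of an open cover. Fix a local fibre coordinate $z$ on $C/S$ and let $s$ be the local generator of $L^{-1}$ determined by $s\otimes s=\partial_z$ under the isomorphism $L^{-1}\otimes_{\mathcal O_C}L^{-1}=L^{-2}\cong\omega_{C/S}^{-1}$ dual to the structure map $L^{\otimes 2}\to\omega_{C/S}$; note that $s\otimes s$, and hence what follows, does not depend on the sign of $s$. Writing $a=\varphi s$ and $b=\psi s$, I define the \emph{Wronskian}
\[
w(a,b)=(\partial_z\varphi)\,\psi-\varphi\,(\partial_z\psi).
\]
Property (1) is immediate: interchanging $a$ and $b$ and commuting the factors past one another produces exactly the Koszul sign $-(-1)^{p(a)p(b)}$ (for example, the even--even case gives a genuine sign change, while the odd--odd case gives symmetry). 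For (2) a one-line computation gives $w(fa,b)=f\,w(a,b)+(\partial_z f)\,\varphi\psi$, and since $d_{C/S}f=(\partial_z f)\,dz$ and $\langle dz,s\otimes s\rangle=\langle dz,\partial_z\rangle=1$, the extra term is precisely $\langle d_{C/S}f,a\otimes b\rangle$. Specializing to $a=b$ with $\varphi$ odd recovers the quadratic form with the local expression $\varphi\,\partial_z\varphi$ quoted above.

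It remains only to see that these charts are compatible, but this is now automatic: on an overlap the two Wronskians both satisfy (1) and (2), so by the uniqueness already proved they coincide, and the local forms glue to a global $w$. It is instructive to see the compatibility by hand, as this is where the hypothesis that $L$ is a theta-characteristic enters. Under a change of coordinate $z\mapsto\zeta$ the adapted generator rescales by the half-power $s_\zeta=(\partial_z\zeta)^{-1/2}s_z$; differentiating this half-power in the Wronskian produces two second-derivative terms, and these cancel, leaving $w_\zeta=w_z$. This cancellation of the ``Christoffel'' terms is the crux of the argument and uses precisely the weight $\pm\tfrac12$ coming from $L^{\otimes 2}\cong\omega_{C/S}$; for a general power of $\omega_{C/S}$ a nonzero second-order (Schwarzian-type) obstruction would remain and no such canonical $w$ would exist. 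This is the step I expect to be the main obstacle, and it is the reason the construction is special to square roots of the canonical bundle.
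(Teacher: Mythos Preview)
Your proof is correct and follows essentially the same route as the paper's: derive the local Wronskian formula $w(fe,ge)=\langle df\cdot g-f\cdot dg,\,e\otimes e\rangle$ from (1) and (2) to get uniqueness, then check that this local prescription is independent of the choice of frame. The only cosmetic difference is that the paper verifies well-definedness directly via the identity $d(hf)\,hg-hf\,d(hg)=h^2(g\,df-f\,dg)$ (your ``half-power cancellation'' spelled out algebraically), whereas you invoke the already-proved uniqueness to glue and then give the direct computation as a remark; both arguments are the same at heart.
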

  
  \begin{proof}
    Let $e$ be a local basis of $L^{-1}$. Then, for any
    $f,g\in\mathcal O_C$,
    $w(fe,ge)=\langle(df\,g-f dg),e\otimes e\rangle$. This shows
    uniqueness. It remains to show that this formula defines a
    well-defined map $w$, independent of the choice of $e$. This
    follows from the identity
    \[
      d(hf)\,hg-hg\,d(hf)=h^2(fdg-gdf),\quad f,g,h\in\mathcal O_C.
    \]  
  \end{proof}
  \begin{remark} Locally,
    \[
      w(f(z)dz^{-1/2},g(z)dz^{-1/2})=f'(z)g(z)-f(z)g'(z)
    \]
    and Lemma \ref{l-w} implies that this expression is well-defined,
    independently of the choice of local coordinate $z$.
  \end{remark}
  The quadratic $Q\colon L^{-1}\to\mathcal O_C$ appearing in $T_\varphi$
  is the quadratic form
  associated to this bilinear form:
  \[
    Q(a)=\frac 12 w(a,a).
  \]

\subsection{Gluing construction of versal families in odd directions}

Let us start with any family $C\to S_0$ of curves with a relative theta-characteristic $L$ and a section
$p:S_0\to C$. This family defines a split
supercurve $X_0=(C,\mathcal O_C\oplus L)$ over $S_0$.
We want to extend $X_0$ to a supercurve $X\to S$ where
$\mathcal O_S=\mathcal O_{S_0}\otimes\bigwedge W$ with $W=H^1(C_s,L_s^{-1})^\vee$
for some $s\in S_0$
as in Section \ref{ss-odd moduli}. 

For this we assume in addition that $S_0$ is affine, $h^0(C_s,L_s^{-1})=0$ for all $s\in S_0$,
and the locally free $\OO(S_0)$-module $H^1(C,L^{-1})$ is trivialized:
$$H^1(C,L^{-1})\simeq W^\vee\ot \OO_{S_0}.$$
The required supercurve over $S$ is obtained from the trivial
extension $(|X_0|,\mathcal O_{X_0}\otimes\bigwedge W)$ by twisting by
a class in $H^1(X_0,\mathrm{Aut}^0_{X_0/S})$.
To define this class we cover $C$ by two open sets: an affine neighbourhood $U_0$ of 
$p(S_0)\sub C$, and $U_1=C\smallsetminus p(S_0)$. Then a class in
$H^1(X_0,\mathrm{Aut}^0_{X_0/S})$ is represented by a cocycle in
$\Gamma(U_0\cap U_1,\operatorname{Aut}^0_{X_0/S})$ which we take to be
\[
  T_\varphi=\exp(\varphi),\quad \varphi=\sum\eta_ib_i,
\]
with $b_i\in\Gamma(U_0\cap U_1,L^{-1})$ representing a basis of
$W^\vee\sub H^1(C,L^{-1})$ and the odd coordinates $(\eta_i)$ forming the dual basis of $W$.
Then the class $[b_i]\in H^1(C,L^{-1})$ is the Kodaira-Spencer class
corresponding to the vector field $\partial/\partial\eta_i$.

\subsection{Hodge complex and duality}\label{hodge-duality-sec}
Recall that the Hodge bundle of an ordinary smooth (proper, flat) family of curves
$C\to S$ of genus $g$ is $\pi_*\omega_{C/S}$. It is locally free of rank
$g$. The situation is more subtle for supercurves: consider for
example a supercurve over a point with theta-characteristic $L$ such
that $\operatorname{dim}\,H^0(L)=m>0$.  
Then $\pi_*\omega_{X}=H^0(\omega_X)=H^0(\omega_C)\oplus H^0(L)$ has rank
$g|m$, which varies from one supercurve to another. 
A more adequate object to work with is
the {\it Hodge complex} of a supercurve $X\to S$ defined as the derived direct image
  $R\pi_*\omega_{X/S}$ of its relative dualizing sheaf.

Below we will show that over the locus where the underlying theta-characterstic has no nonzero sections,
the sheaf $\pi_*\omega_{X/S}$
  is locally free of rank $g|0$. On the other hand, we will show that for the universal family, the sheaves
  $\pi_*\omega_{X/S}$ and $R^1\pi_*\omega_{X/S}$ are not locally free near every point where $h^0(L)\neq 0$ (see Theorem 
  \ref{non-loc-free-thm}).

Recall that for a family of supercurves $\pi:X\to S$ one has a canonical
map 
$$\tau:R^1\pi_*\om_{X/S}\to \OO_S$$
(see \eqref{Ber-curve-trace-map}). Since $R\pi_*\om_{X/S}$ has cohomology in degrees $0$ and $1$ we can
view this map as a morphism in the derived category
$$R\pi_*\om_{X/S}\to \OO_S[-1].$$

Assume that $S$ is affine and the corresponding reduced family
$C\to S_0$ has a marked point $\ov{p}:S_0\to C$,
and let $U_0,U_1$ be an open covering of $C$, such that $U_0$ is an affine
neighborhood of $\ov{p}(S_0)$ and $U_1$ is the complement to $\ov{p}(S_0)$.
Then we can calculate $R^1\pi_*\om_{X/S}(S)$ as 
$$\coker(\om_{X/S}(U_0)\oplus \om_{X/S}(U_1)\to \om_{X/S}(U_{01})).$$
Thus, we get a canonical (even) residue map
$$\Res_{\ov{p}}:\om_{X/S}(U_{01})\to \OO_S,$$
which vanishes on the image of $\om_{X/S}(U_0)$.
Furthermore, since $\tau$ vanishes on the image of $\de:R^1\pi_*\om_{X/S}$,
it follows that $\Res_{\ov{p}}$ vanishes on $\de(\om_{X/S}(U_{01}))$.

We can also replace $U_{01}$ be a formal punctured disk around $\ov{p}$.

\begin{remark}\label{superdisk-remark}
Let us consider the (split) formal superdisk $D$ and the punctured formal superdisk $D'$ over $S$, given by the algebras
$$\OO_D=\OO_S[\![z]\!]\oplus \OO_S[\![z]\!]\th,$$
$$\OO_{D'}=\OO_S(\!(z)\!)\oplus \OO_S(\!(z)\!)\th,$$
where we think of $\th$ as a formal square root $dz^{1/2}$.
We have 
$$\om_D=[dz|d\th]\OO_D, \ \ \om_{D'}=[dz|d\th]\OO_{D'},$$
and the derivation $\de$ is given by 
$$\de:\OO_D\to \om_D, \OO_{D'}\to \om_{D'}: f(z)+g(z)\th\mapsto [dz|d\th](g(z)+f'(z)\th).$$
Then the map 
$$\Res:[dz|d\th]\OO_{D'}\to \OO_S$$
is the composition of the projection to $[dz|d\th]\OO_S(\!(z)\!)\th$ with the usual residue map on $\OO_S(\!(z)\!)$. 
Note that the fact that $\Res$ vanishes on $[dz|d\th]\OO_S(\!(z)\!)$ is a consequence of the fact that it vanishes on the image of $\de$. 
\end{remark}

There is an analogue of Grothendieck-Serre duality for families of supercurves in which $\om_{X/S}$
plays a role of a relative dualizing sheaf, see \cite[Sec.\ 2]{VMP} (see also \cite{RR}, where the case of
Serre duality on supercurves is worked out in detail). 
This duality gives an isomorphism in the derived category of $S$,
$$R\pi_*(V)^\vee\simeq R\pi_*(V^\vee\ot \om_{X/S})[1],$$
for any perfect complex $V$ over $X$.
A part of this theory is the trace
map 
$$\tau':R\pi_*\om_{X/S}\to \OO_S[-1].$$
We claim that $\tau'$ coincides with $\tau$ up to a sign. Indeed, this follows from the compatibility of $\tau'$
with the residue map: if $X/S$ is equipped with a relative divisor $p\sub X$, where $p\to S$ is of relative dimension $0|1$, 
then we have an exact triangle
$$R\pi_*\om_{X/S}\to R\pi_*\om_{X/S}(p)\to \om_{X/S}(p)|_p\rTo{\a} R\pi_*\om_{X/S}[1]$$
such that the composition of $\a$ with $\tau'$ is the residue map
$\om_{X/S}(p)|_p\to \OO_S$.

\section{Local freeness and non-freeness}\label{loc-free-sec}

\subsection{Base change formalism}

Let $f:X\to S$ be a projective map of supervarieties, and let $\FF$ be a coherent sheaf on $X$, flat over $S$.

Assume also that $S$ is affine, $S=\Spec(A)$, where $A$ is a supercommutative ring (finitely generated over a field).
Then exactly as in the purely even case (see e.g., \cite[Prop.\ III.12.2]{Hart}) one proves that 
there exists a bounded above complex of projective $A$-modules $Q^\bullet$, such that
for every finitely generated $A$-module $M$, one has a functorial isomorphism
$$H^i(X,\FF\otimes_A M)\simeq H^i(Q^\bullet\otimes_A M)$$
The study of these cohomology functors as $M\mapsto T^i(M)$
is parallel to the purely even case (see \cite{BHRP} for details).

Assume in addition that $A$ is local with maximal ideal $\mathfrak m$.
For every $A$-module $M$ there is a natural map
$$\phi^i(M):T^i(A)\otimes M\to T^i(M).$$
Let $s$ be the closed point of $S$ with residue field $k(s)=A/\mathfrak m$.

\begin{theorem}\label{coh-bc-thm} (see \cite[Sec.\ III.12]{Hart})
(a) If $\phi^i(k(s))$ is surjective then $\phi^i(M)$ is an isomorphism for any $M$.

\noindent
(b) Assume that $\phi^i(k(s))$ is surjective. Then $\phi^{i-1}(k(s))$ is surjective if and only if $T^i(A)$ is a free $A$-module.
\end{theorem}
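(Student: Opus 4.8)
The plan is to reduce the statement to elementary homological algebra for a complex of finite free modules, exactly as in the purely even case; the only genuinely super input will be Nakayama's lemma and the local criterion of flatness over the supercommutative local ring $A$, which are available because every odd element of $A$ is nilpotent and hence lies in the Jacobson radical, so that $\mathfrak m$ is the unique maximal graded ideal and finitely generated projective $A$-modules are free. First I would replace $Q^\bullet$ by a bounded complex of finitely generated free $A$-modules computing the same functors $T^j(M)=H^j(Q^\bullet\ot_A M)$: since $f$ is projective the $T^j$ are coherent and vanish for $j$ outside a bounded range, so the standard truncation argument (\cite[Sec.\ III.12]{Hart}) applies verbatim. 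Writing $d^j\colon Q^j\to Q^{j+1}$ for the differentials and $W^{j}:=\coker(d^{j-1}\colon Q^{j-1}\to Q^j)$, right exactness of $-\ot_A M$ gives $W^j\ot_A M=\coker(d^{j-1}\ot M)$, and $d^j$ induces a map $\wt d_j\colon W^j\to Q^{j+1}$ with $T^j(M)=\ker(\wt d_j\ot M)$ and $\coker(\wt d_j)=W^{j+1}$. Under these identifications $\phi^j(M)$ becomes the canonical comparison map $\ker(\wt d_j)\ot M\to\ker(\wt d_j\ot M)$.

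The key computation is to identify the kernel and cokernel of $\phi^j(M)$. Splitting the four-term exact sequence
\[
0\to T^j(A)\to W^j\to Q^{j+1}\to W^{j+1}\to 0,
\]
(with middle map $\wt d_j$) into two short exact sequences and tensoring with $M$, using $\operatorname{Tor}_1^A(Q^{j+1},M)=0$, a diagram chase yields a natural isomorphism $\coker(\phi^j(M))\cong\operatorname{Tor}_1^A(W^{j+1},M)$ together with $\ker(\phi^j(M))=\im\big(\operatorname{Tor}_1^A(\im\wt d_j,M)\to T^j(A)\ot M\big)$. In particular $\phi^j(k(s))$ is surjective if and only if $\operatorname{Tor}_1^A(W^{j+1},k(s))=0$, which by the local criterion of flatness means $W^{j+1}$ is flat, hence free since it is finitely generated over $A$. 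Taking $j=i$ proves (a): once $W^{i+1}$ is free, $\operatorname{Tor}_1^A(W^{i+1},M)=0$ for all $M$, and moreover the sequence $0\to\im\wt d_i\to Q^{i+1}\to W^{i+1}\to 0$ splits, so $\im\wt d_i$ is free and $\operatorname{Tor}_1^A(\im\wt d_i,M)=0$ as well. Hence both $\coker(\phi^i(M))$ and $\ker(\phi^i(M))$ vanish for every $M$, i.e.\ $\phi^i(M)$ is an isomorphism.

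For (b) I would apply the same characterization one degree lower. Assuming $\phi^i(k(s))$ surjective, part (a) shows $\im\wt d_i$ is a free direct summand of $Q^{i+1}$, so the surjection $W^i\to\im\wt d_i$ splits and $W^i\cong T^i(A)\oplus\im\wt d_i$; as $\im\wt d_i$ is free, $T^i(A)$ is free (a direct summand of a free module is projective, hence free over the local ring $A$) if and only if $W^i$ is free. But $W^i=W^{(i-1)+1}$, so by the equivalence established above with $j=i-1$, $W^i$ is free if and only if $\phi^{i-1}(k(s))$ is surjective, which is exactly the assertion. The main obstacle is not these chases but the two foundational reductions flagged at the outset: producing a bounded complex of \emph{finitely generated} free modules, so that $W^{j+1}$ is a finite module to which the local criterion of flatness applies, and verifying Nakayama's lemma and the local flatness criterion over the supercommutative local ring $A$; both go through once one observes that the odd part of $A$ lies in its Jacobson radical.
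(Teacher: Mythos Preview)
Your proof is correct and is precisely the argument the paper has in mind: the paper gives no proof of its own for this theorem, simply citing \cite[Sec.~III.12]{Hart} and remarking that everything is ``proved in the same way as in the purely even case.'' Your write-up is a faithful execution of Hartshorne's argument, with the two super-specific points (Nakayama and the local criterion of flatness over a supercommutative local ring, both valid because odd elements are nilpotent) correctly isolated and justified.
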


Note that $T^i(A)\simeq H^0(S,R^if_*\FF)$, so we can use the above Theorem to check local freeness of $R^if_*\FF$.


\subsection{Local freeness for a family of supercurves}\label{loc-free-super-sec}

Now assume that the fiber dimension of the map between reduced (usual) schemes induced by $f$ is $1$.
Then $T^{>1}=0$, so by Theorem \ref{coh-bc-thm} applied to $i=2$, $\phi^1(k(s))$ is an isomorphism.
Thus, by Theorem \ref{coh-bc-thm}(b) applied to $i=1$, if $\phi^0(k(s))$ is surjective then $R^1f_*(\FF)$ is locally free.
Since $\phi^{-1}(k(s))$ is always surjective, under the same assumption $f_*\FF$ is also locally free.

\begin{prop}\label{loc-free-prop}
Let $\pi:X\to S$ be a family of supercurves, and let $\ov{\pi}:C\to S_0$ be the corresponding usual family of
curves with a relative theta-characteristic $L$. Assume that $R\ov{\pi}_*L=0$ (or equivalently, $H^*(L_s)=0$ on every fiber).
Then $R^1\pi_*\OO_X$, $\pi_*\om_{X/S}$ and $R^1\pi_*\om_{X/S}$ are locally free, and the natural map
$$\OO_S\to \pi_*\OO_X$$
is an isomorphism.
\end{prop}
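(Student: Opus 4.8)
The plan is to deduce everything from the base-change criterion of Section \ref{loc-free-super-sec} applied to $\FF=\OO_X$, and then to transfer the conclusion to $\om_{X/S}$ by the duality of Section \ref{hodge-duality-sec}. Since local freeness and the isomorphism statement are local on $S$, I first localize and assume $S=\Spec A$ with $A$ local, closed point $s$ and residue field $k(s)$. The fibre $X_s$ is a supercurve over the purely even field $k(s)$, hence splits: $\OO_{X_s}\cong\OO_{C_s}\oplus L_s$ and, by the description in Section \ref{supercurves-even-base-sec}, $\om_{X_s}\cong\om_{C_s}\oplus L_s$, with $\OO_{C_s},\om_{C_s}$ even and $L_s$ odd. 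Using the hypothesis $H^*(L_s)=0$ (note that for a theta-characteristic $h^0(L_s)=h^1(L_s)$ by Serre duality, so the two vanishings are equivalent) I read off the fibre cohomology: $H^0(X_s,\OO_{X_s})=k(s)$ of rank $1|0$ and $H^1(X_s,\OO_{X_s})=H^1(C_s,\OO_{C_s})$ of rank $g|0$, while $H^0(X_s,\om_{X_s})=H^0(C_s,\om_{C_s})$ of rank $g|0$ and $H^1(X_s,\om_{X_s})=k(s)$ of rank $1|0$.

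First I apply the criterion to $\FF=\OO_X$. The map $\phi^0(k(s))$ is surjective because its image contains the image of the global constant $1\in\OO_S(S)\subset H^0(X,\OO_X)$, which already spans $H^0(X_s,\OO_{X_s})=k(s)$. By the discussion in Section \ref{loc-free-super-sec} this single surjectivity gives that both $\pi_*\OO_X$ and $R^1\pi_*\OO_X$ are locally free; by Theorem \ref{coh-bc-thm}(a) the map $\phi^0$ is then an isomorphism, so $\pi_*\OO_X$ has rank $1|0$ and $R^1\pi_*\OO_X$ has rank $g|0$. For the isomorphism $\OO_S\to\pi_*\OO_X$ I observe that both sheaves are locally free of rank $1|0$ and that modulo $\fm$ the map becomes $k(s)\to\pi_*\OO_X\otimes k(s)=H^0(X_s,\OO_{X_s})$, $1\mapsto 1$, an isomorphism; Nakayama's lemma then upgrades this to an isomorphism of sheaves.

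Finally I treat $\om_{X/S}$ by duality rather than by a direct base-change computation, since $\phi^0$ for $\om_{X/S}$ is not transparent and $S$ need not be even. The Grothendieck--Serre duality of Section \ref{hodge-duality-sec}, applied to $V=\OO_X$, gives a canonical isomorphism $R\pi_*\om_{X/S}\cong(R\pi_*\OO_X)^\vee[-1]$ in the derived category of $S$. I have just shown that the cohomology sheaves of $R\pi_*\OO_X$ are locally free, so in the universal-coefficients spectral sequence $\Ext^p(\HH^{-q}(R\pi_*\OO_X),\OO_S)\Rightarrow\HH^{p+q}((R\pi_*\OO_X)^\vee)$ all terms with $p\neq 0$ vanish; dualization therefore commutes with taking cohomology. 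Reading off the two cohomology sheaves after the shift yields $\pi_*\om_{X/S}\cong(R^1\pi_*\OO_X)^\vee$ and $R^1\pi_*\om_{X/S}\cong(\pi_*\OO_X)^\vee$, both locally free, of ranks $g|0$ and $1|0$ matching the fibre computation. The main obstacle is precisely this last transfer: one must ensure that the derived-category duality isomorphism descends to the individual cohomology sheaves, and it is exactly here that the local freeness of $\pi_*\OO_X$ and $R^1\pi_*\OO_X$ established in the second step is used.
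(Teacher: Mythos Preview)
Your proof is correct and follows essentially the same route as the paper: establish local freeness of $\pi_*\OO_X$ and $R^1\pi_*\OO_X$ via the base-change criterion, then transfer to $\om_{X/S}$ by Grothendieck duality. The only cosmetic difference is in the final step: the paper observes that a two-term complex with locally free cohomology splits locally as $\pi_*\OO_X\oplus R^1\pi_*\OO_X[-1]$ and dualizes directly, whereas you invoke the hyper-Ext spectral sequence; both arguments exploit the same vanishing of higher $\Ext$'s and are interchangeable.
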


\begin{proof}
First, let us apply Theorem \ref{coh-bc-thm} to the sheaf $\OO_X$.
The assumption on $L$ implies that for every point $s\in S$ the map 
$$\phi^0(k(s)):\pi_*\OO_X\otimes k(s)\to H^0(C_s,\OO)$$ 
is surjective. Hence, $\pi_*\OO_X$ and $R^1\pi_*\OO_X$ are locally free,
and the map $\OO_S\to\pi_*\OO_X$ is an isomorphism. 
This implies that locally we have a splitting $R\pi_*\OO_X\simeq \pi_*\OO_X\oplus R^1\pi_*\OO_X[-1]$.

By Grothendieck duality, we have 
$$R\pi_*\om_{X/S}\simeq (R\pi_*\OO_X)^\vee[-1]\simeq (\pi_*\OO_X\oplus R^1\pi_*\OO_X[-1])^\vee[-1].$$
It follows that $R^1\pi_*\om_{X/S}\simeq (\pi_*\OO_C)^\vee$ and
$\pi_*\om_{X/S}\simeq (R^1\pi_*\OO_C)^\vee$ are locally free.
\end{proof}

\subsection{Local non-freeness}

The following result implies that $R^i\pi_*\omega_{X/S}$ is not locally free in the vicinity of any point of the moduli space
where the corresponding theta-characteristic
has non-trivial sections. In particular, for the universal family $\pi:X\to \SS^-$ over the component
corresponding to odd theta-characteristics, the sheaves $\pi_*\om_{X/S}$ and $R^1\pi_*\om_{X/S}$
are not locally free.

\begin{theorem}\label{non-loc-free-thm}
Let $\pi:X\to S$ be the universal family of supercurves over the moduli stack of supercurves.
Assume that the corresponding theta-characteristic $L_s$ on $C_s$ has $h^0(C_s,L_s)\neq 0$ for some
point $s\in S$.
Then the sheaves $\pi_*\om_{X/S}$ and $R^1\pi_*\om_{X/S}$ are not locally free near $s$. 
\end{theorem}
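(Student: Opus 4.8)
The plan is to deduce non-local-freeness from the failure of cohomology and base change in degree $0$, handling both components of the moduli space uniformly and using the duality of Section~\ref{hodge-duality-sec} to move between $\OO_X$ and $\om_{X/S}$. Write $A=\OO_{S,s}$ and let $T^i_{\OO},T^i_{\om}$ be the base change functors of Theorem~\ref{coh-bc-thm} attached to $\OO_X$ and $\om_{X/S}$. As in Section~\ref{loc-free-super-sec}, $\phi^1$ is automatically surjective since the relative dimension is $1$, so Theorem~\ref{coh-bc-thm}(b) shows that $R^1\pi_*\om_{X/S}=T^1_{\om}(A)$ is free near $s$ if and only if the fiberwise map $\phi^0_{\om}(k(s))\colon \pi_*\om_{X/S}\ot k(s)\to H^0(X_s,\om_{X_s})$ is surjective. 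Thus the task is to produce a class in $H^0(X_s,\om_{X_s})$ that is not the restriction of a germ of a global section.

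Since $s$ lies in the reduced (even) locus the fiber is split, $H^0(X_s,\om_{X_s})=H^0(C_s,\om_{C_s})\oplus H^0(C_s,L_s)$. I would test surjectivity against a one-parameter odd deformation from the gluing construction of versal families in odd directions (cf.\ Section~\ref{ss-odd moduli} and \cite{LeBrunRothstein1988}): take the cover $\{U_0,U_1\}$ with $U_0\ni\ov p$, set $A=k[\eta]$ with $\eta$ odd, and glue by $T_{\eta b}$ where $b\in\Ga(U_{01},L^{-1})$ represents a chosen class $\xi=[b]\in H^1(C,L^{-1})$. Using the coordinate-free formula $T^*_{\eta b}(g,\psi)=(g+\eta b\,\psi,\ \psi+\eta b\,dg)$ of Section~\ref{s-coord} (here $Q(\eta b)=0$), a \v{C}ech computation for $\om_{X/S}=\om_C\oplus L$ shows that the \emph{even} section determined by $\beta\in H^0(\om_C)$ extends over $k[\eta]$ exactly when $\xi\cup\beta\in H^1(C,L)$ vanishes, while the odd sections, whose obstruction lies in $H^1(C,\om_C)$ and equals $d_*(\xi\cup s)$, always extend by degeneration of Hodge--de Rham. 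For $\OO_X$ the roles of the parities are reversed: it is the odd section $s\in H^0(L)$ that is obstructed, with obstruction the cup product $\xi\cup s\in H^1(C,\OO_C)$.

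The geometric heart is the non-vanishing of cup product: I claim $\xi\cup\,\cdot\,\colon H^0(C,L)\to H^1(C,\OO_C)$ is nonzero for a suitable $\xi$. Indeed, by Serre duality its transpose is the multiplication map $H^0(\om_C)\to H^0(\om_C\ot L)=H^0(L^{3})$, $\tau\mapsto s\tau$, which is injective for any $0\neq s$ because the section ring of the integral curve $C$ has no zero divisors and $H^0(\om_C)\neq0$ (as $g\ge2$). The two obstruction maps $\xi\cup\,\cdot\,\colon H^0(\om_C)\to H^1(L)$ and $\xi\cup\,\cdot\,\colon H^0(L)\to H^1(\OO_C)$ are mutually transpose under Serre duality, hence nonzero together. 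Choosing $\xi$ with $\xi\cup\,\cdot\,\neq0$ and an obstructed even class $\beta\in H^0(\om_C)$, the section $\beta$ does not extend, so $\phi^0_{\om}(k(s))$ is not surjective and $R^1\pi_*\om_{X/S}$ is not locally free near $s$ by Theorem~\ref{coh-bc-thm}. The parallel statement for $R^1\pi_*\OO_X$ follows from the odd obstruction, and is also compatible with the statement for $\om$ through the duality $R\pi_*\om_{X/S}\simeq(R\pi_*\OO_X)^\vee[-1]$ of Section~\ref{hodge-duality-sec}.

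The main obstacle is the sheaf $\pi_*\om_{X/S}$ itself, since non-freeness of $T^0$ is precisely the direction in which Theorem~\ref{coh-bc-thm} gives no leverage: writing $R\pi_*\OO_X=[P^0\xrightarrow{d}P^1]$ one has $\pi_*\om_{X/S}=\ker(d^\vee)$, and $\ker(d^\vee)$ can be free even when $\coker(d^\vee)$ is not. I would settle this by a direct module computation rather than a base-change argument. Running the gluing construction over $A=k[\eta]$ and computing global sections, the finitely generated $k[\eta]$-modules $\pi_*\om_{X/S}$ and $\pi_*\OO_X$ come out with unequal even and odd dimensions over $k$, the discrepancy being exactly $\dim\im(\xi\cup\,\cdot\,\colon H^0(\om_C)\to H^1(L))$ for $\om_{X/S}$ and $\dim\im(\xi\cup\,\cdot\,\colon H^0(L)\to H^1(\OO_C))$ for $\OO_X$. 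Since a free module over $k[\eta]$ has equal even and odd dimension, both modules are non-free as soon as the cup product is nonzero; this simultaneously yields $\pi_*\om_{X/S}$ and (by duality) $R^1\pi_*\om_{X/S}$. The only delicate point, which must be phrased with care, is that the odd line be realized as part of the versal deformation of $s$, so that these modules are genuinely the stalks of the universal $\pi_*\om_{X/S}$ rather than a pullback in which degree-$0$ base change has been invoked. Finally, on the odd component $\MM^-$ every point has $h^0(L)\ge1$, so the argument applies verbatim and gives the stated non-freeness there, even though the reduced Hodge sheaf has locally constant rank $g|1$.
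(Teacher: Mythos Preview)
Your argument for $R^1\pi_*\omega_{X/S}$ is correct and essentially matches the paper's: the obstruction $\kappa(s)$ of Lemma~\ref{local-non-freeness-lem} is precisely the first-order obstruction to extending fiberwise sections along an odd direction, and both approaches reduce to nonvanishing of the cup product $H^1(L^{-1})\otimes H^0(\omega_C)\to H^1(L)$ (the paper packages this as Lemma~\ref{loc-free-obstr-calc-lem}, you package it as non-surjectivity of $\phi^0(k(s))$).

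Your argument for $\pi_*\omega_{X/S}$, however, has a genuine gap---one you yourself flag but do not close. Over $T=\Spec k[\eta]$ you compute the \emph{pushforward} $\pi'_*\omega_{X'/T}=T^0_\omega(k[\eta])$ and show it has unequal even and odd $k$-dimensions, hence is not free. But the sheaf whose freeness is at stake is $(\pi_*\omega_{X/S})_s$, and what you would need is that the \emph{pullback} $\iota^*(\pi_*\omega_{X/S})=T^0_\omega(A)\otimes_A k[\eta]$ is not free. These differ exactly by the base-change map $\phi^0(k[\eta])$, which you have just shown fails. ``Realizing the odd line inside the versal deformation'' does not help: $\Spec k[\eta]$ is already a closed subscheme of $S$, and restriction of $\pi_*\omega_{X/S}$ to it is the tensor product, not the pushforward for the restricted family. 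Concretely, over $A=k[\![x]\!][\eta_1,\eta_2]$ with $d=x+\eta_1\eta_2\colon A\to A$ one has $\ker d=0$ (free) while $\ker(d\otimes k[\eta_1,\eta_2])=(\eta_1,\eta_2)$ is not free; so freeness of $T^0(A)$ does not propagate to $T^0$ of a quotient, and the contrapositive you need is not available.

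The paper's proof of this half is genuinely more delicate. It splits off $\OO_S[-1]$ via the trace, replaces the remaining complex by the minimal model $H^0(\omega_C)\!\otimes\!\OO_S\oplus H^0(L)\!\otimes\!\OO_S\xrightarrow{(f^-,f^+)} H^1(L)\!\otimes\!\OO_S$, and then argues with the full exterior-algebra structure of $\OO_S$ over $\OO_{S_0}$ (Lemma~\ref{exterior-alg-modules-lem}), separating the cases $h^0(L)$ even and $h^0(L)$ odd. In the even case one compares $\NN$-annihilators; in the odd case one compares $\FF_{\NN}$ with $\FF_{\NN^2}/\FF_{\NN}$ and observes that the ranks are off by one from what freeness would force. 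Your single odd variable cannot see this, because the obstruction to freeness of $\ker$ lives at order~$\ge 2$ in the nilradical.
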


We will use the following general obstacle to local freeness.

\begin{lemma}\label{local-non-freeness-lem} 
Let $Q^\bullet=[Q^0\rTo{f} Q^1]$ be a two-term complex of vector bundles over a superscheme $S$.
For every point $s\in S$ let us define the map
$$\kappa(s)=\kappa_{Q^\bullet}(s):T_sS\otimes H^0(Q^\bullet|_s)\to H^1(Q^\bullet|_s),$$
where $T_sS$ is the Zariski tangent space to $S$ at $s$ as follows.
Let $\fm$ be the maximal ideal in the local ring of $s$. Then we have
$$H^0(Q^\bullet|_s)=\ker (Q^0/\fm Q^0\to Q^1/\fm Q^1),$$
so we have a natural map
\begin{equation}\label{obstr-kappa-definition-eq}
H^0(Q^\bullet|_s)\to \fm Q^1/(\fm f(Q^0)+\fm^2 Q^1)\simeq \fm/\fm^2\ot (Q^1/f(Q^0)),
\end{equation}
from which $\kappa$ is obtained by dualization.

\noindent
(i) The map $\kappa(s)$ depends only on a quasi-isomorphism class of $Q^\bullet$.

\noindent
(ii) Locally near $s$ there exists a complex 
$$H^0(Q^\bullet|_s)\ot \OO_S \rTo{g} H^1(Q^\bullet|_s)\ot \OO_S,$$ 
quasi-isomorphic to $Q^\bullet$,
such that the entries of $g$ are in $\fm$ and $g\mod \fm^2$ corresponds to $\kappa(s)$.

\noindent
(iii) If $\kappa(s)$ is nontrivial then $\und{H}^1 Q^\bullet$ is not locally free near $s$.
\end{lemma}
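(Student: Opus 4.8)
The plan is to reduce all three statements to a \emph{minimal model} of $Q^\bullet$ near $s$, obtained by Nakayama-type Gaussian elimination, while deriving the invariance (i) from an intrinsic description of $\kappa(s)$ as a connecting homomorphism. For (i), I would first note that, since $Q^0$ and $Q^1$ are vector bundles and hence flat, the fibers $H^i(Q^\bullet|_s)$ compute $H^i(Q^\bullet\ot^L_{\OO_S}k(s))$ and thus depend only on the image of $Q^\bullet$ in the derived category. I then recognize $\kappa(s)$ as a Bockstein: applying $Q^\bullet\ot^L_{\OO_S}(-)$ to the short exact sequence
\[
0\to \fm/\fm^2\to \OO_{S,s}/\fm^2\to k(s)\to 0
\]
yields a distinguished triangle whose connecting map is $H^0(Q^\bullet|_s)\to H^1(Q^\bullet|_s)\ot_{k(s)}\fm/\fm^2$, flatness identifying $Q^\bullet\ot(\fm/\fm^2)$ with $(Q^\bullet|_s)\ot_{k(s)}\fm/\fm^2$. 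A direct unwinding of the two constructions shows that, after dualizing the factor $\fm/\fm^2$, this connecting map coincides with the map \eqref{obstr-kappa-definition-eq} defining $\kappa(s)$. Since the connecting homomorphism of a derived tensor triangle depends only on the quasi-isomorphism class of $Q^\bullet$, this proves (i).

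For (ii), I would construct the minimal model by linear algebra over the local ring $\OO_{S,s}$. Let $r=\rk(f_s)$ on the fiber, and choose homogeneous bases of $Q^0,Q^1$ adapted to the fiberwise decompositions $Q^0|_s=\ker f_s\oplus(\cdots)$ and $Q^1|_s=\im f_s\oplus(\cdots)$. Then a suitable $r\times r$ minor of $f$ is invertible at $s$, hence in a neighborhood, and block elimination (passing to the Schur complement) splits off an acyclic summand $[\OO_S^r\rTo{\sim}\OO_S^r]$ and presents $Q^\bullet$ as homotopy equivalent to
\[
H^0(Q^\bullet|_s)\ot\OO_S\rTo{g}H^1(Q^\bullet|_s)\ot\OO_S,
\]
the fibers at $s$ being $\ker f_s=H^0(Q^\bullet|_s)$ and $\coker f_s=H^1(Q^\bullet|_s)$. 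By construction the Schur complement $g$ vanishes on the fiber, so its entries lie in $\fm$. For this minimal complex the map \eqref{obstr-kappa-definition-eq} is tautologically reduction of $g$ modulo $\fm^2$, since $f(Q^0)$ vanishes fiberwise and the term $\fm f(Q^0)$ drops out. Combined with the invariance (i), this shows $g\bmod\fm^2$ corresponds to $\kappa(s)$.

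For (iii), the minimal model identifies $\und{H}^1Q^\bullet$ near $s$ with $\coker(g\colon\OO_S^a\to\OO_S^b)$, where $b=\dim H^1(Q^\bullet|_s)$ and the fiber $\coker(g)\ot k(s)=k(s)^b$ has dimension $b$ because $g_s=0$. Were this cokernel locally free, it would be free of rank $b$ over $\OO_{S,s}$, whence $\coker(g)\ot_{\OO_{S,s}}\OO_{S,s}/\fm^2\cong(\OO_{S,s}/\fm^2)^b$, and by right-exactness this is $\coker(\bar g)$ for $\bar g=g\bmod\fm^2$. Counting $k(s)$-dimensions in the presentation $(\OO_{S,s}/\fm^2)^a\rTo{\bar g}(\OO_{S,s}/\fm^2)^b\to\coker(\bar g)\to 0$ then forces $\im(\bar g)=0$, i.e. $g\equiv 0\bmod\fm^2$, contradicting the nontriviality of $\kappa(s)$ via (ii). Hence $\und{H}^1Q^\bullet$ is not locally free near $s$. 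Note that this dimension count replaces the usual ``jump of fiber rank'' argument, which fails here since an entry of $g$ may be odd or nilpotent and so vanish at every reduced point.

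The main obstacle I anticipate is the bookkeeping that matches the intrinsic connecting map (and, for the minimal model, the Schur-complement differential $g$) with the explicit formula \eqref{obstr-kappa-definition-eq}, together with ensuring that every step respects the $\Z/2$-grading of the super setting: in particular one must check that the invertible minor used in elimination can be taken even, so that the splitting and the residual differential $g$ are genuine morphisms of supervector bundles and the dimension count in (iii) remains valid over the Artinian local superalgebra $\OO_{S,s}/\fm^2$.
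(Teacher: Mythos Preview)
Your proposal is correct, and for part (ii) it is essentially identical to the paper's argument (Gaussian elimination to a minimal model). Parts (i) and (iii), however, take genuinely different routes.

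For (i), the paper argues directly that the explicit map \eqref{obstr-kappa-definition-eq} is natural with respect to chain maps of two-term complexes of vector bundles; since any quasi-isomorphism between such complexes is locally a homotopy equivalence, this suffices. Your Bockstein interpretation---realizing $\kappa(s)$ as the connecting map for $Q^\bullet\otimes^L(-)$ applied to $0\to\fm/\fm^2\to\OO_{S,s}/\fm^2\to k(s)\to 0$---is more conceptual and makes the derived invariance immediate without invoking the homotopy-equivalence step; it also makes transparent why $H^i(Q^\bullet|_s)$ are themselves derived invariants.

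For (iii), the paper's argument is shorter: if $\und{H}^1Q^\bullet$ is locally free, then the surjection $Q^1\to\und{H}^1Q^\bullet$ splits locally, forcing $\im(f)$ and hence $\und{H}^0Q^\bullet=\ker(f)$ to be locally free as well, so $Q^\bullet$ is locally quasi-isomorphic to $[\und{H}^0\rTo{0}\und{H}^1]$; then (i) gives $\kappa(s)=0$. Your dimension count over the Artinian thickening $\OO_{S,s}/\fm^2$ reaches the same conclusion by a more hands-on route; it has the virtue of not relying on (i), but the paper's splitting argument is slicker. Your caution about the $\Z/2$-grading is well placed but ultimately harmless: since $f$ is an even morphism, the invertible $r\times r$ block in homogeneous bases automatically has even entries (it reduces to $1_r$ modulo $\fm$), so the elimination respects parity.
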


\begin{proof}
(i) This follows from the naturality of the map \eqref{obstr-kappa-definition-eq}
with respect to chain maps between two-term complexes.

\noindent
(ii) We can find bases of $Q^0|_s$ and $Q^1|_s$ such that the matrix of $f\mod \fm$ has form
$$f\mod \fm=\left(\begin{matrix} 1_r & 0 \\ 0 & 0\end{matrix}\right).$$
Lifting these bases to local bases of $Q^0$ and $Q^1$ and then adjusting them appropriately, we can
arrange the matrix of $f$ to be of the form
$$f=\left(\begin{matrix} 1_r & 0 \\ 0 & M\end{matrix}\right)$$
where $M$ has entries in $\fm$. But this means that there exists a quasi-isomorphic two-term complex 
with the differential given by $M$, and the assertion follows.

\noindent
(iii) If $\und{H}^1Q^\bullet$ is locally free near $s$ then in a neighborhood of $s$ there exists a 
two-term complex quasi-isomorphic to $Q^\bullet$ with zero differential. Hence, by (i), $\kappa(s)=0$. 
\end{proof}


The crucial part of the proof of Theorem \ref{non-loc-free-thm} is the following calculation of
a component of the obstruction map.

\begin{lemma}\label{loc-free-obstr-calc-lem} 
Let $[Q^0\to Q^1]$ be a resolution of $R\pi_*\om_{X/S}$ in a neighborhood of a point $s$ of the moduli space $S$. Then 
we have decompositions into even and odd components,
$$H^i(\om_{X_s})=H^i(\om_{C_s})\oplus H^i(L_s)$$
and the component
\begin{equation}\label{L-cup-product-map-eq}
H^1(L_s^{-1})\ot H^0(\om_{C_s})\to H^1(L_s)
\end{equation}
of $\kappa(s)$ is given by the cup-product. Furthermore,
the map \eqref{L-cup-product-map-eq} is surjective.
\end{lemma}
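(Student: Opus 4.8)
The plan is to identify \eqref{L-cup-product-map-eq} with the cup product by a direct \v{C}ech computation of the obstruction map $\kappa(s)$ of Lemma \ref{local-non-freeness-lem}, and then to establish surjectivity by Serre duality.

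For the identification, I would first choose a convenient resolution. By the quasi-isomorphism invariance in Lemma \ref{local-non-freeness-lem}(i), $\kappa(s)$ may be computed from the two-term \v{C}ech complex $\om_{X/S}(U_0)\oplus\om_{X/S}(U_1)\to\om_{X/S}(U_{01})$ attached to the covering $U_0,U_1$ of $C$ used above, whose $H^1$ computes $R^1\pi_*\om_{X/S}$ and which is quasi-isomorphic to $[Q^0\to Q^1]$; in this model $\kappa(s)$ is the first-order obstruction to extending a global section through the deformed gluing. Since the component \eqref{L-cup-product-map-eq} only involves the odd part of the Zariski tangent space, I would restrict to the versal family in the odd directions: take $S_0$ a point and $\OO_S=\bigwedge W$ with $W=H^1(C,L^{-1})^\vee$, so that $\fm/\fm^2=W$ is purely odd, $T_sS=H^1(C,L^{-1})$, and $\partial/\partial\eta_i$ corresponds to the Kodaira--Spencer class $[b_i]$ as in the gluing construction. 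On the split fibre one has $\om_{X_s}=\om_{C_s}\oplus L_s$, giving the decomposition $H^i(\om_{X_s})=H^i(\om_{C_s})\oplus H^i(L_s)$.

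The heart of the matter is the explicit obstruction. Given $\sigma\in H^0(\om_{C_s})\subset H^0(\om_{X_s})$, I would lift it to the split pair $(\sigma,\sigma)$ and apply the deformed \v{C}ech differential. As the family is twisted from the trivial extension by $T_\varphi=\exp(v(\varphi))$ with $\varphi=\sum_i\eta_i b_i$ and $v(\varphi)=\theta\varphi\partial_z+\varphi\partial_\theta$, the failure to glue is, modulo $\fm^2$, the Lie derivative $\pm\,\mathcal L_{v(\varphi)}\sigma$ on the Berezinian $\om_{X/S}$. Writing $\sigma=h\,\theta\,[dz|d\theta]$ and using $\partial_\theta\varphi=0$ together with $\theta^2=0$, the super-divergence term drops out and a short computation yields $\mathcal L_{v(\varphi)}\sigma=\varphi\,h\,[dz|d\theta]=\sum_i\eta_i\,(b_i h)\,[dz|d\theta]$, a section of the odd summand $L$. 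Under $L^{-1}\ot\om_C\cong L$ the cochain $b_i h|_{U_{01}}$ is exactly a \v{C}ech representative of $[b_i]\cup\sigma$, so $\kappa(s)$ sends $[b_i]\ot\sigma$ to $[b_i]\cup\sigma$ and \eqref{L-cup-product-map-eq} is the cup product. I expect this to be the main obstacle: reconciling the intrinsic obstruction of Lemma \ref{local-non-freeness-lem} with the first-order failure to glue, and carrying out the Lie-derivative calculation on the Berezinian with the correct parities and signs.

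For surjectivity I would dualize by Serre duality. The transpose of \eqref{L-cup-product-map-eq} is the map $H^0(L)=H^1(L)^\vee\to(H^1(L^{-1})\ot H^0(\om_C))^\vee$ sending $t$ to the functional $\xi\ot\sigma\mapsto\lan\xi\cup\sigma,t\ran=\lan\xi,\sigma t\ran$, where $\sigma t\in H^0(\om_C\ot L)$ and $\lan\,,\,\ran$ is the perfect Serre pairing $H^1(L^{-1})\ot H^0(\om_C\ot L)\to\C$. For $t\neq0$ and any nonzero $\sigma\in H^0(\om_C)$ the product $\sigma t$ is a nonzero section, since the function field of $C$ is a domain and $H^0(\om_C)\neq0$; perfectness of the pairing then forces the functional to be nonzero. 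Hence the transpose is injective and \eqref{L-cup-product-map-eq} is surjective.
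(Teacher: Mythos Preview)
Your proposal is correct and follows essentially the same approach as the paper: both reduce to a \v{C}ech model, compute the first-order change in the gluing as the Lie derivative of $\sigma$ along the odd superconformal vector field, identify the result with the contraction $L^{-1}\otimes\omega_C\to L$ (hence the cup product), and prove surjectivity by the identical Serre-duality argument. The only cosmetic differences are that the paper works one odd direction at a time over $\C[\eps]/(\eps^2)$ rather than over $\bigwedge W$, and phrases the Lie-derivative step by invoking the description of $\delta$ on the split supercurve rather than the explicit coordinate computation you sketch.
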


\begin{proof}
It is enough to consider the family $\pi:X\to S$ over $S=\Spec(\C[\eps]/\eps^2)$ (where $\eps$ is odd) deforming the supercurve $X_0$, with
$\OO_{X_0}=\OO_C\oplus L$ over $\Spec(\C)$, where $C=C_s$, $L=L_s$, associated with an odd tangent vector 
$v\in H^1(C,L^{-1})$.

Let $C=U_0\cup U_1$ be an open affine covering of $C$. Then $R\pi_*\om_{X/S}$
is represented by the complex
$$[\omega_{X/S}(U_1)\to \omega_{X/S}(U_{01})/\omega_{X/S}(U_0)].$$
Furthermore, we have an identification of $\OO_X(U_i)$ with $\OO_{X_0}(U_i)[\eps]$, so that the two identifications of
$\OO_X(U_{01})$ with $\OO_{X_0}(U_{01})[\eps]$ differ by an automorphism $T$ given by $\exp(\eps \wt{v})$,
where $\wt{v}$ is an odd vector field over $U_{01}$ representing the class $v\in H^1(C,L^{-1})$.
Thus, we can represent $R\pi_*\om_{X/S}$ by the complex
\begin{equation}\label{Hodge-complex-T-differential}
[\om_{X_0/S}(U_1)[\eps]\rTo{\alpha\mapsto T(\alpha|_{U_{01}})} \om_{X_0/S}(U_{01})/\om_{X_0/S}(U_0)].
\end{equation}

We claim that the restriction of the induced action of $T$ on the Berezinian of $\OO_{X_0}(U_{01})[\eps]$ over 
$\C[\eps]/\eps^2$ to $\om_{C}(U_{01})$ is given by
$$\om_{C}(U_{01})\to \om_{C}(U_{01})[\eps]\oplus L(U_{01})[\eps]: \alpha\mapsto \alpha+\eps\lan \wt{v},\alpha\ran.$$
Indeed, in general this action is given $\alpha\mapsto \alpha+L_{\wt{v}}(\alpha)$, where
$L_{\wt{v}}$ is the Lie derivative. Since $\wt{v}$ is an odd vector field, our claim follows easily from the description
of the differential $\delta$ in terms of the splitting $\OO_{X_0}=\OO_{C}\oplus L$ (see Sec.\ \ref{supercurves-even-base-sec}).

Thus, if $\a$ is a global section of $\om_C$ then the differential of the complex \eqref{Hodge-complex-T-differential}
applied to $\a|_{U_1}$ gives $\eps\lan \wt{v},\alpha\ran$, which leads to the required formula for $\kappa(s)$.

To check the surjectivity statement we observe that by Serre duality, the map \eqref{L-cup-product-map-eq} is dual
to the map
$$H^0(L_s)\to \Hom(H^0(\om_{C_s}),H^0(\om_{C_s}\ot L_s)):s\mapsto (\a\mapsto \a s),$$
which is clearly injective: for a nonzero $s\in H^0(L_s)$, the corresponding map $H^0(\om_{C_s})\to H^0(\om_{C_s}\ot L_s)$
is injective, hence nonzero.
\end{proof}

We also need the following algebraic result about free modules over an exterior algebra.

\begin{lemma}\label{exterior-alg-modules-lem} 
Let $R$ be a commutative ring, and let $\AA=\bigwedge^*_R(\bigoplus_{i=1}^n R\th_i)$ be the exterior algebra with $n$
generators $\th_1,\ldots,\th_n$ over $R$. Let us consider the ideal $\NN=\bigwedge^{\ge 1}_R(R^n)\sub \AA$. For an $\AA$-module
$M$, we denote by $M_\NN$ the annihilator of $\NN$ in $M$. Suppose we have a morphism $\phi:M\to M'$ of
free $\AA$-modules of finite rank, such that the induced morphism $M_\NN\to M'_\NN$ is an isomorphism.
Then $\phi$ is an isomorphism. 
\end{lemma}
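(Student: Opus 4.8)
The plan is to translate the hypothesis on the socles $M_\NN$ into a statement about the reduction of $\phi$ modulo $\NN$, and then to finish with a nilpotent Nakayama argument. The starting point is a computation of the socle of $\AA$ itself. Since $\NN$ is generated by $\th_1,\dots,\th_n$, an element $a=\sum_S a_S\th_S$ (sum over subsets $S\sub\{1,\dots,n\}$, with $a_S\in R$) satisfies $\NN a=0$ if and only if $\th_i a=0$ for every $i$; expanding $\th_i a$ in the standard free $R$-basis $\{\th_S\}$ shows this forces $a_S=0$ unless $S=\{1,\dots,n\}$. Hence $\AA_\NN=R\cdot\om$ is free of rank $1$ over $R$, where $\om=\th_1\cdots\th_n$ is the top form, and crucially $\NN\cdot\om=0$. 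For a free module $M=\AA\ot_R V$ with $V$ finite free over $R$, the ideal $\NN$ acts through the first factor, so $M_\NN=\AA_\NN\ot_R V=R\om\ot_R V$, and $v\mapsto\om\ot v$ identifies $M_\NN$ with $V$ as $R$-modules.

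The key step is to identify the induced map $\phi_\NN\colon M_\NN\to M'_\NN$ with the reduction $\ov\phi=\phi\bmod\NN\colon V\to V'$. Writing $\phi$ as a matrix over $\AA$ and decomposing each entry with respect to the direct sum decomposition $\AA=R\oplus\NN$ into its constant term (in $R$) and a term in $\NN$, the relation $\NN\om=0$ annihilates all the nonconstant contributions as soon as we evaluate on an element of the socle: concretely $\phi(\om\ot v)=\om\ot\ov\phi(v)$ for all $v\in V$. Thus, under the identifications $M_\NN\cong V$ and $M'_\NN\cong V'$ of the first paragraph, $\phi_\NN$ is exactly $\ov\phi$. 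The hypothesis that $\phi_\NN$ is an isomorphism therefore says precisely that $\ov\phi$ is an isomorphism of finite free $R$-modules; in particular $V$ and $V'$ have the same rank, so $M$ and $M'$ have the same rank $r$ over $\AA$, and after a choice of bases $\ov\phi$ is given by an invertible matrix $\phi_0\in\mathrm{GL}_r(R)$.

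Finally I would upgrade ``$\ov\phi$ invertible'' to ``$\phi$ invertible'' using that $\NN$ is nilpotent, with $\NN^{n+1}=0$. Choosing bases identifying $M\cong\AA^r\cong M'$, the matrix of $\phi$ is $\phi_0+N$, where $\phi_0\in\mathrm{GL}_r(R)$ and $N$ has all entries in $\NN$; since every entry of $N^{n+1}$ lies in $\NN^{n+1}=0$, the matrix $N$ is nilpotent, so $\phi=\phi_0(1+\phi_0^{-1}N)$ is the product of the invertible $\phi_0$ with a unipotent matrix and hence invertible. I expect the only real content to lie in the middle step, namely the recognition that restricting $\phi$ to the socle $M_\NN=\om\cdot V$ records exactly the leading $R$-linear term $\ov\phi$ of $\phi$, precisely because all higher-order (nilpotent) corrections kill the top form $\om$. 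Once this identification is in hand, the outer two steps are a routine socle computation and a standard nilpotent-Nakayama argument.
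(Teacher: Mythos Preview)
Your proof is correct and follows essentially the same route as the paper's: both arguments hinge on the observation that multiplication by the top form $\om=\th_1\cdots\th_n$ gives an isomorphism $M/\NN M\to M_\NN$ for free $M$, so that the hypothesis on socles becomes exactly the statement that $\ov\phi=\phi\bmod\NN$ is an isomorphism, after which nilpotence of $\NN$ finishes. The only cosmetic difference is in the endgame: you write $\phi$ as a matrix $\phi_0+N$ with $N^{n+1}=0$ and invert directly, whereas the paper argues that $\phi$ is surjective by Nakayama, splits since $M'$ is free, and then kills the kernel $K$ via $K/\NN K=0$.
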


\begin{proof} 
For a free $\AA$-module of finite rank $M$, the map induced by the $\AA$-action, 
$$\th_1\cdot\ldots\cdot\th_n\cdot:M/\NN M\to M_\NN$$
is an isomorphism. Since $\phi$ is compatible with the $\AA$-action, we deduce that the morphism
$$M/\NN M\to M'/\NN M'$$
is an isomorphism. Since $\NN$ is nilpotent, this implies that $\phi$ is surjective. Hence, $\phi$
is a projection onto a direct summand. Thus, if $K=\ker(\phi)$, we deduce that $K/\NN K=0$, so $K=0$.
\end{proof}

\begin{proof}[Proof of Theorem \ref{non-loc-free-thm}]
Set $C=C_s$, $L=L_s$.
By Serre duality, the cup-product map $H^1(C,L^{-1})\ot H^0(C,\om_C)\to H^1(C,L)$ corresponds
to the cup-product map 
\begin{equation}\label{omega-L-obstr-cup-product-eq}
H^0(C,\om_C)\ot H^0(C,L)\to H^0(C,\om\ot L).
\end{equation}
The latter map is always nonzero whenever $H^0(C,L)\neq 0$.
Thus, using Lemma \ref{loc-free-obstr-calc-lem}, we see that the obstruction $\kappa(s)$ associated with a
resolution of $R\pi_*\om_{X/S}$ is nonzero, so
by Lemma \ref{local-non-freeness-lem}, the sheaf $R^1\pi_*\om_{X/S}$ is not locally free near $s$.

Next, we will deal with the sheaf $\pi_*\om_{X/S}$, replacing $S$ by an affine neighborhood of $s$.
The trace map \eqref{Ber-curve-trace-map} can be viewed as a morphism in derived category
$$R\pi_*\om_{X/S}\to \OO_S[-1].$$
More precisely, if $R\pi_*\om_{X/S}$ is represented by a complex of vector bundles $[A\rTo{f} B]$ then $\tau$ corresponds
to a map $B/f(A)\to \OO_S$, so we can view it as a chain map $[A\to B]\to \OO_S[-1]$. 
Furthermore, since $\tau$ is surjective, there exists a splitting $\OO_S\to B$ of the projection $B\to \OO_S$,
which we can view as a chain map $\OO_S[-1]\to [A\to B]$. Thus, we get a splitting in derived category,
$$R\pi_*\om_{X/S}\simeq P^\bullet\oplus \OO_S[-1].$$
Next, by Lemma \ref{local-non-freeness-lem}(ii), locally near $s$,
the complex $P^\bullet$ is quasi-isomorphic to a two-term complex $Q^\bullet$ of the form
$$H^0(\om_C)\ot \OO_S\oplus H^0(L)\ot \OO_S\rTo{(f^-,f^+)} H^1(L)\ot \OO_S$$
(here the space $H^0(\om_C)$ is even, while the spaces $H^i(L)$ are odd),
where the linear terms of $f^-$ are given by the map \eqref{L-cup-product-map-eq}.

Let $S_0\sub S$ be the reduced subscheme. Then the restriction of $Q^\bullet$ to $S_0$ splits
into the direct sum of $H^0(\om_C)\ot\OO_{S_0}$ and the complex 
\begin{equation}\label{theta-char-complex-eq}
H^0(L)\ot \OO_{S_0}\rTo{f^+|_{S_0}} H^1(L)\ot \OO_{S_0}
\end{equation}
computing the direct image of the universal theta-characteristic.

Set $\FF=\und{H}^0Q^\bullet=\ker(f^-,f^+)$. We want to check that $\FF$ is not locally free.
We distinguish two cases.

\medskip

\noindent
{\bf Case $h^0(L)$ is even}. Then the complex \eqref{theta-char-complex-eq} is generically acyclic, so
$f^+|_{S_0}$ is generically an isomorphism. In particular, $f^+$ is injective and the sheaf $\coker(f^+)$ has rank $0$.
It follows that $\FF$ can be identified with the kernel of the morphism
$$H^0(\om_C)\ot \OO_S\to \coker(f^+)$$
induced by $f^-$. In particular, $\FF$ is an $\OO_S$-submodule of $H^0(\om_C)\ot\OO_S$.

Locally we have a splitting $\OO_S\simeq {\bigwedge}(W)\ot \OO_{S_0}$, with $\NN={\bigwedge}^{\ge 1}(W)\ot \OO_{S_0}$.
Assume that $\FF$ is a locally free $\OO_S$-module and let us lead this to a contradiction.
Consider the submodule $\FF_\NN$ consisting of
sections annihilated by $\NN$. 
Since $f^-$ has entries in $\NN$, we get that 
$$\FF_\NN=H^0(\om_C)\ot {\bigwedge}^{2g-2}(W)\ot\OO_{S_0}.$$
Hence, by Lemma \ref{exterior-alg-modules-lem}, $\FF=H^0(\om_C)\ot \OO_S$.
In other words, the image of $f^-:H^0(\om_C)\ot\OO_S\to H^1(L)\ot \OO_S$ 
is contained in the image of $f^+:H^0(L)\ot \OO_S\to H^1(L)\ot \OO_S$.
Let us consider the following component of $f^-$:
$$f^-_1:H^0(\om_C)\ot {\bigwedge}^{2g-3}(W)\ot\OO_{S_0}\to H^1(L)\ot {\bigwedge}^{2g-2}(W)\ot\OO_{S_0}.$$
Modulo the maximal ideal we get the linear map 
$$H^0(\om_C)\ot W^\vee\simeq H^0(\om_C)\ot H^1(L^{-1})\to H^1(L)$$
which is given by the cup-product and is surjective (see Lemma \ref{loc-free-obstr-calc-lem}). Hence, 
$H^1(L)\ot {\bigwedge}^{2g-2}(W)\ot\OO_{S_0}=(H^1(L)\ot\OO_S)_{\NN}$ is in the image of $f^-$, so it has to be in the image of $f^+$.
Since $f^+$ is injective, it can only be in the image of $(H^0(L)\ot \OO_S)_{\NN}$.
But the map
$$H^0(L)\ot \OO_{S_0}\simeq (H^0(L)\ot \OO_S)_{\NN}\to (H^1(L)\ot\OO_S)_{\NN}\simeq H^1(L)\ot\OO_{S_0}$$
induced by $f^+|_{S_0}$ has entries in the maximal ideal, so its image cannot be everything, which is a contradiction.

\medskip

\noindent
{\bf Case $h^0(L)$ is odd}. Since the locus of local freeness is open, it is enough to consider the case $h^0(L)=1$
(since the set of such points is dense in the odd component of the moduli space).
In this case $f^+|_{S_0}=0$, so $f^+$ has entries in $\NN^2$.
Hence, we have
$$\FF_\NN=H^0(\om_C)\ot\OO_{S_0}\oplus H^0(L)\ot \OO_{S_0},$$
which is a locally free $\OO_{S_0}$-module of rank $g+1$ (we count both odd and even
generators). Let us also consider $\FF_{\NN^2}$, the annihilator of $\NN^2$ in $\FF$.
We have
$$\FF_{\NN^2}=
\ker((H^0(\om_C)\ot\OO_S)_{\NN^2}\rTo{f^-} (H^1(L)\ot\OO_S)_\NN)\oplus (H^0(L)\ot\OO_S)_{\NN^2},$$
so that
$$\FF_{\NN^2}/\FF_\NN\simeq \ker(W^\vee\ot H^0(\om_C)\ot \OO_{S_0}\rTo{f^-} H^1(L)\ot \OO_{S_0})
\oplus W^\vee\ot\OO_{S_0}.$$
It follows that $\FF_{\NN^2}/\FF_\NN$ is locally free over $\OO_{S_0}$ of rank $(g+1)\dim(W)-1$.
But for a free $\OO_S$-module $\GG$, we would have 
$$\rk(\GG_{\NN^2}/\GG_\NN)=\dim(W)\cdot \rk(\GG_\NN).$$
Hence, $\FF$ is not locally free over $\OO_S$ near $s$.
\end{proof}


\section{Symplectic picture}\label{s-4}

\subsection{Support of a complex of bundles}

Let $Q^\bullet$ be a bounded complex of vector bundles over a superscheme $S$.

\begin{definition}
Let us define the quasicoherent sheaf of ideals $\II(Q^\bullet)\sub\OO_S$ as 
the kernel of the canonical morphism
$$\OO_S\to \und{H}^0((Q^\bullet)^\vee\ot Q^\bullet).$$
Equivalently, this ideal consists of $f\in\OO$ such that
the morphism $f\cdot \id$ of $Q^\bullet$ is locally homotopic to zero.
\end{definition}

We need some simple properties of this definition,
which in particular imply that $\II(Q^\bullet)$ defines a subscheme structure on the support of $Q^\bullet$. 

\begin{lemma}\label{complex-support-lem}
(i) If complexes $Q^\bullet$ and $R^\bullet$ are quasi-isomorphic then $\II(Q^\bullet)=\II(R^\bullet)$.
In particular, one has $\II(Q^\bullet)=\OO_S$ if and only if $Q^\bullet$ is acyclic.

\noindent
(ii) One has $\II(Q^\bullet)\sub \cap_i \Ann(\und{H}^i Q^\bullet)$.

\noindent 
(iii) For an open subset $U\sub S$ one has $\II(Q^\bullet)|_U=\II(Q^\bullet|_U)$.

\noindent
(iv) Consider a $2$-term complex $A\rTo{\de} B$, such that $\de$ is an isomorphism on a Zariski open subset $U\sub S$.
Assume that $S$ is affine, and the homomorphism $\OO(S)\to \OO(U)$ is injective.
Then $f\in H^0(S,\II(A\to B))$ if and only if the morphism $f|_U\de^{-1}:B|_U\to A|_U$ has a regular extension to $S$.  
\end{lemma}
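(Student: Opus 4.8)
The plan is to unwind the definition of $\II(Q^\bullet)$ into homotopy-theoretic terms and then dispose of the four assertions in order, the last being the only one that requires genuine work. First I would record the basic reinterpretation: for a bounded complex of vector bundles $Q^\bullet$ the endomorphism complex $(Q^\bullet)^\vee\ot Q^\bullet$ is the internal Hom complex, so its degree-$0$ cohomology sheaf $\und H^0((Q^\bullet)^\vee\ot Q^\bullet)$ is the sheaf of chain-homotopy classes of degree-$0$ $\OO_S$-linear chain endomorphisms of $Q^\bullet$, and the canonical map $\OO_S\to\und H^0$ sends a local function $f$ to the class of $f\cdot\id$. Thus $f$ is a local section of $\II(Q^\bullet)$ precisely when $f\cdot\id_{Q^\bullet}$ is locally null-homotopic, which is the second description given in the definition.

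With this in hand, (i), (ii), (iii) are quick. For (i), since the $Q^i$ are vector bundles the complex $(Q^\bullet)^\vee\ot Q^\bullet$ represents the derived endomorphism object of $Q^\bullet$, which is invariant under quasi-isomorphisms compatibly with the distinguished identity class: a quasi-isomorphism $Q^\bullet\to R^\bullet$ induces an isomorphism of the two endomorphism complexes in the derived category carrying $[\id_{Q^\bullet}]$ to $[\id_{R^\bullet}]$, hence identifies the two maps $\OO_S\to\und H^0$ and so their kernels $\II(Q^\bullet)=\II(R^\bullet)$. For the ``in particular'' clause, $\II(Q^\bullet)=\OO_S$ means $1\in\II$, i.e. $\id_{Q^\bullet}$ is locally null-homotopic, i.e. $Q^\bullet$ is locally contractible, which for a bounded complex of vector bundles is equivalent to acyclicity. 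For (ii), a null-homotopic endomorphism induces the zero map on each cohomology sheaf, so $f\in\II(Q^\bullet)$ forces $f$ to annihilate every $\und H^iQ^\bullet$; as annihilation is a local condition this gives $\II(Q^\bullet)\sub\cap_i\Ann(\und H^iQ^\bullet)$. Part (iii) is formal, since restriction to an open $U$ commutes with dualization, tensor product, the formation of cohomology sheaves, and the kernel.

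For (iv) I would compute the endomorphism complex of $[A\xrightarrow{\de}B]$ explicitly: in degrees $-1,0,1$ it is $B^\vee\ot A\to (A^\vee\ot A)\oplus(B^\vee\ot B)\to A^\vee\ot B$, with $d^{-1}(s)=(s\de,\de s)$ and $d^0(a,b)=\de a-b\de$. Then $f\cdot\id=(f\id_A,f\id_B)$ is a cocycle, and $f$ is a local section of $\II$ iff locally there is $s\colon B\to A$ with $s\de=f\id_A$ and $\de s=f\id_B$. The crucial step is to pass from ``locally'' to a single global homotopy: because $S$ is affine, $\Gamma(S,-)$ is exact on quasicoherent sheaves, so $H^0(S,\und H^0((Q^\bullet)^\vee\ot Q^\bullet))$ is the degree-$0$ cohomology of the complex of global sections. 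Consequently $f\in H^0(S,\II)$ iff $(f\id_A,f\id_B)$ is a global coboundary, i.e. there is a global $s\in(B^\vee\ot A)(S)$ with $s\de=f\id_A$ and $\de s=f\id_B$. Restricting such an $s$ to $U$, where $\de$ is invertible, forces $s|_U=f|_U\de^{-1}$, so $s$ is exactly a regular extension of $f|_U\de^{-1}$ to $S$.

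Conversely, suppose $f|_U\de^{-1}$ extends to a global morphism $s\colon B\to A$. Then $s\de-f\id_A$ and $\de s-f\id_B$ are global sections of the vector bundles $A^\vee\ot A$ and $B^\vee\ot B$ that vanish on $U$; since $S$ is affine these bundles are direct summands of free modules, so their sections inject into sections over $U$ as soon as $\OO(S)\to\OO(U)$ is injective, whence both differences vanish on all of $S$ and $s$ is a global null-homotopy, giving $f\in H^0(S,\II)$. The main obstacle is precisely this promotion of the local, homotopy-class definition of $\II$ to a statement about a single morphism $B\to A$: affineness of $S$ (through exactness of global sections) is what manufactures the global homotopy in the forward direction, while the injectivity hypothesis on $\OO(S)\to\OO(U)$ is what rigidifies the extension and forces it to be a genuine homotopy in the converse.
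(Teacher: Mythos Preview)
Your argument is correct and follows the same approach as the paper's (very terse) proof, which for (i) invokes that quasi-isomorphisms of bounded complexes of vector bundles are local homotopy equivalences, dismisses (ii) and (iii) as straightforward, and for (iv) only notes that a homotopy $h$ between $f\cdot\id$ and $0$ satisfies $h\de=f\id_A$ and hence extends $f\de^{-1}$. You have filled in the details the paper omits---most notably the use of affineness to pass from local to a single global homotopy and the use of the injectivity hypothesis $\OO(S)\hookrightarrow\OO(U)$ for the converse direction in (iv).
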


\begin{proof}
Part (i) follows from the fact that any 
quasi-isomorphism between bounded complexes of vector bundles locally becomes a homotopy equivalence.
Parts (ii) and (iii) are straightforward. For part (iv), we observe that a homotopy $h:B\to A$ between
$f\cdot \id$ and $0$ satisfies $h\de=f\id_A$, which is equivalent to $h$ being an extension of $f\de^{-1}$.
\end{proof}

\subsection{Isotropic intersections/co-intersections}

Let $\VV, (\cdot,\cdot)$ be a symplectic vector bundle of rank $2m|2n$ over an irreducible superscheme $S$,
and let $L_1,L_2\sub\VV$ be a pair of maximal isotropic subbundles of rank $m|n$
(this means that $L_i$ is isotropic and the morphism $\VV/L_i\to L_i^\vee$ induced by
the pairing is an isomorphism).
Then we have a complex of bundles over $S$, concentrated in degrees $0$ and $1$,
$$C^\bullet(L_1,L_2)=C^\bullet(\VV;L_1,L_2): [L_1\to \VV/L_2]$$
that controls the behavior of the intersections $f^*L_1\cap f^*L_2=\und{H}^0 f^*C^\bullet$ for arbitrary morphisms
$f:T\to S$, and of the co-intersection $\VV/(L_1+L_2)\simeq \und{H}^1C^\bullet$.
Note that the symplectic form induces an isomorphism $\VV/L_2\simeq L_2^\vee$, so we can also write the above complex
as 
$$[L_1\to L_2^\vee],$$
which in particular shows that $C^\bullet(L_2,L_1)$ is dual to $C^\bullet(L_1,L_2)[-1]$.
On the other hand, we have a natural quasi-isomorphism to $C^\bullet(L_1,L_2)$ from
the complex
$$[L_1\oplus L_2\to \VV].$$
Thus, as an object in the derived category, it is self-dual up to a shift.

We say that $L_1$ and $L_2$ are transversal if the natural map $L_1\oplus L_2\to \VV$ is an isomorphism.
This is equivalent to exactness of the complex $C^\bullet(L_1,L_2)$.

\begin{prop}\label{selfdual-rep-prop}
(i) Assume that $L'_2\sub \VV$ is a maximal isotropic subbundle transversal to both $L_1$ and $L_2$.
Let us identify $L'_2$ with $L_2^\vee$ using the pairing between $L_2$ and $L'_2$ given by the symplectic form.
Then $L_1$ is the graph of a symmetric morphism
$\phi:L_2\to L'_2\simeq L_2^\vee$
(i.e., $\phi^*=\phi$), and the complex $C^\bullet(L_1,L_2)$ is isomorphic to
$$[L_2\rTo{\phi} L'_2].$$

\noindent
(ii) Assume that the complex $C^\bullet(L_1,L_2)$ is generically exact. 
If $n$ (the dimension of $L_i^-$)
is even then locally there exists a maximal isotropic subbundle transversal to both $L_1$ and $L_2$.
For any $n$ there exists a symmetric representative of $C^\bullet(L_1,L_2)$,
i.e., there exists a quasi-isomorphic complex of the form 
$$M\rTo{\phi} M^\vee$$
with $\phi^*=\phi$. Furthermore, if $n$ is even then we can find such $M$ of rank $m|n$ (same as the rank of $L_i$),
while for odd $n$ we can find $M$ of rank $m|(n+1)$.
\end{prop}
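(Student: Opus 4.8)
The plan is to handle (i) by a direct graph computation and then to deduce (ii) by reducing, after a local choice of a common transversal maximal isotropic subbundle, to the setting of (i); the parity of $n$ will enter only through the existence of such a transversal.

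For part (i), transversality of $L_2'$ to $L_2$ gives a splitting $\VV=L_2\oplus L_2'$ and, via the symplectic form, an identification $L_2'\simeq L_2^\vee$. Transversality of $L_1$ to $L_2'$ means that the projection $\VV\to L_2$ along $L_2'$ restricts to an isomorphism $L_1\to L_2$, so $L_1$ is the graph of a morphism $\phi\colon L_2\to L_2'\simeq L_2^\vee$, $v\mapsto v+\phi(v)$. Writing the isotropy condition $(v+\phi(v),w+\phi(w))=0$ for $v,w\in L_2$ and using that $L_2$ and $L_2'$ are isotropic leaves $(v,\phi(w))+(\phi(v),w)=0$, which, taking the sign rule of the even symplectic form into account, is exactly $\phi^*=\phi$. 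Finally, the differential $L_1\to \VV/L_2\simeq L_2'\simeq L_2^\vee$ of $C^\bullet(L_1,L_2)$ becomes $\phi$ under $L_1\simeq L_2$, yielding the asserted isomorphism $C^\bullet(L_1,L_2)\cong[L_2\xrightarrow{\phi}L_2']$.

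For part (ii) I first treat the existence of a common transversal when $n$ is even. Decompose $\VV=\VV_0\perp\VV_1$ into even and odd parts: $\VV_0$ is symplectic of rank $2m$, the form restricts to a nondegenerate symmetric form on $\VV_1$ of rank $2n$, and any maximal isotropic subbundle splits accordingly into $L_{i,0}\oplus L_{i,1}$. On the even part any two Lagrangians have a common transversal Lagrangian in each fibre, and I extend a fibrewise choice at $s$ to a local section of the (smooth) Lagrangian Grassmannian bundle, transversality persisting near $s$ by openness. On the odd part the governing invariant is the family (connected component of the orthogonal isotropic Grassmannian $OG(n,2n)$) of $L_{1,1}$ and $L_{2,1}$: two maximal isotropics admit a common transversal maximal isotropic if and only if they lie in the same family, equivalently the dimension of their intersection is congruent to $n$ modulo $2$. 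The families of the subbundles $L_{1,1},L_{2,1}$ are locally constant on the irreducible $S$, and generic exactness of $C^\bullet(L_1,L_2)$ forces the odd parts to be generically transversal; hence the two families agree at the generic point, and so everywhere, precisely when $n$ is even. For $n$ even this gives a common transversal in the fibre at $s$, which extends to a local maximal isotropic subbundle transversal to both $L_1$ and $L_2$, and (i) produces the symmetric representative $[L_2\xrightarrow{\phi}L_2^\vee]$ of rank $m|n$.

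For odd $n$ I stabilize to make the odd rank even. Let $H$ be the trivial hyperbolic bundle of rank $0|2$ with its two isotropic line subbundles $H^+,H^-$, and set $\widetilde\VV=\VV\perp H$, $\widetilde L_1=L_1\perp H^+$, $\widetilde L_2=L_2\perp H^-$; these are maximal isotropic of rank $m|(n+1)$. A direct computation shows $C^\bullet(\widetilde L_1,\widetilde L_2)\cong C^\bullet(L_1,L_2)\oplus[H^+\xrightarrow{\sim}H/H^-]$, whose second summand is acyclic, so $C^\bullet(\widetilde L_1,\widetilde L_2)$ is quasi-isomorphic to $C^\bullet(L_1,L_2)$ and in particular generically exact. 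As the odd rank is now $n+1$, even, the previous case supplies a common transversal and hence a symmetric representative $[\widetilde L_2\xrightarrow{\widetilde\phi}\widetilde L_2^\vee]$ of rank $m|(n+1)$, as required. The step I expect to be the main obstacle is the odd-part linear algebra: identifying the component of $OG(n,2n)$ as the precise obstruction, proving that same-family maximal isotropics always have a common transversal (via irreducibility of a component of $OG(n,2n)$ and density of the cell transversal to a fixed maximal isotropic), and upgrading this fibrewise statement to a subbundle over a neighborhood. Once this is in hand, part (i) and the stabilization are formal.
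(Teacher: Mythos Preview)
Your argument is correct and follows essentially the same route as the paper. Part (i) is identical. For part (ii) with $n$ even, the paper also reduces to the fibre at a point, observes that generic transversality forces $\dim(L_{1x}^-\cap L_{2x}^-)$ to be even (your ``same family in $OG(n,2n)$'' statement is exactly this parity invariant), produces a common transversal in the fibre, and then extends. The only presentational difference is in the extension step: the paper first fixes a maximal isotropic $\widetilde L_2$ complementary to $L_2$, writes the sought $L_2'$ as the graph of a symmetric morphism $\psi\colon\widetilde L_2\to L_2$, constructs $\psi_x$ at the point, and extends $\psi$ as a section of a vector bundle---slightly more concrete than your appeal to smoothness of the isotropic Grassmannian over $S$, but equivalent. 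For odd $n$ both arguments perform the identical hyperbolic stabilisation.

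One small point of care: your global orthogonal splitting $\VV=\VV_0\perp\VV_1$ and the corresponding splitting of the $L_i$ are only valid fibrewise over closed points (over the superscheme the cross-pairing $\VV_0\otimes\VV_1\to\OO_S^-$ need not vanish, and the graded pieces are only $\OO_S^+$-modules). Since you in fact only use this decomposition to produce the transversal in a single fibre and then extend over $S$, this causes no trouble, but the extension should be carried out inside the isotropic Grassmannian of the full $\VV$ (or, as the paper does, via a symmetric morphism $\widetilde L_2\to L_2$), not separately for the even and odd pieces.
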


\begin{proof} 
(i) The first assertion is standard. Using the identification of $L_2^\vee$ with $L'_2$, we can identify $C^\bullet$ with
$[L_1\to L'_2]$, where the map is given by the projection along $L_2$.
Since the projection $L_1\to L_2$ along $L'_2$ is an isomorphism, 
we get an isomorphism of complexes
$$[L_1\to L'_2]\simeq [L_2\rTo{\phi} L'_2]$$
as claimed.

\noindent
(ii) Assume first that $n$ is even. By part (i), in this case it is enough to check that locally we can choose a maximal isotropic 
subbundle $L'_2\sub \VV$, transversal to both $L_1$ and $L_2$.
Indeed, it is easy to see that one can choose a maximal isotropic subbundle $\wt{L}_2\sub\VV$,
complementary to $L_2$. Next, for every point $x\in S$, we have 
$$\VV|_x=\VV^+_x\oplus \VV^-_x, \ \ L_i|_x=L_{ix}^+ \oplus L_{ix}^-,$$
where $L_{ix}^+$ are Lagrangian subspaces of the symplectic vector space $\VV^+|_x$ and
$L_{ix}^-$ are maximal isotropic subspaces of the orthogonal vector space $\VV^-|_x$.
Since generically $L_1|_x$ and $L_2|_x$ have trivial intersection in $\VV|_x$,
it follows that at any point the intersection $L^-_1|_x\cap L^-_2|_x$ has even dimension. 

Then an easy linear algebra argument (using the fact that $n-\dim L_{1x}^-\cap L_{2x}^-$ is even) 
shows that there exists a skew-symmetric map $\psi_x^-:\wt{L}^-_{2x}\to L_{2x}^-$ such that the graph
of $\psi_x^-$ has trivial intersection with $L_{1x}^-$. Similarly, there exists a symmetric map
$\psi_x^+:\wt{L}^+_{2x}\to L_{2x}^+$ such that the graph of $\psi_x^+$ has trivial intersection with
$L_{1x}^+$. Now we view $\psi_x=(\psi_x^+,\psi_x^-)$ as a symmetric map of vector spaces, 
extend it to a symmetric morphism of bundles $\psi:\wt{L}_2\to L_2$ in a neighborhood of $x$ and 
define $L'_2$ to be the graph of $\psi$.
 
Now let us consider the case when $n$ is odd. Then we can set $\VV'=\VV\oplus (\OO e_1\oplus \OO e_2)$, 
where $e_1$ and $e_2$ are odd, and equip $\VV'$
with a symplectic pairing using the pairing on $\VV$ and the standard symmetric pairing on $\OO e_1\oplus \OO e_2$, so that
$e_1$ and $e_2$ are isotropic and $(e_1,e_2)=1$.
Let us also set
$$L'_1=L_1\oplus \OO e_1, \ \ L'_2=L_2\oplus \OO e_2.$$
Then $L'_1$ and $L'_2$ are maximal isotropic in $\VV'$, and
the complex $L_1\to \VV/L_2$ is quasi-isomorphic to 
$$[L_1\oplus \OO e_1\to \VV/L_2\oplus \OO e_1]\simeq [L'_1\to \VV'/L'_2].$$
It remains to apply the case of even $n$.
\end{proof}

We have the following nice reduction property of the complexes $C^\bullet(\VV;L_1,L_2)$.

\begin{lemma}\label{reduction-lem} 
Let $L_1,L_2\sub \VV$ be a pair of maximal isotropic subbundles in a symplectic bundle of rank $2m|2n$.
Let $M\sub L_1$ be a subbundle such that the map $L_2\to M^\vee$, induced by the pairing, is surjective (equivalently,
$\VV=L_2+M^\perp$).
Then the bundle $\ov{\VV}:=M^{\perp}/M$ has an induced symplectic structure and induced maximal isotropic subbundles
$\ov{L}_1=L_1/M$, $\ov{L}_2=L_2\cap M^\perp$ in $\ov{\VV}$. Furthermore, there is a natural quasi-isomorphism of
complexes
$$C^\bullet(\VV;L_1,L_2)\to C^\bullet(\ov{\VV};\ov{L}_1,\ov{L}_2).$$
\end{lemma}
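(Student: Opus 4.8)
The plan is to exhibit everything as ordinary symplectic reduction together with a single short exact sequence of two-term complexes whose kernel is visibly acyclic. First I record the reduction data. Since $M\sub L_1$ and $L_1$ is isotropic, $M$ is isotropic, so $M\sub M^\perp$; as $M$ is a subbundle and the form is nondegenerate, $\VV\to M^\vee$ is a surjection of bundles with kernel $M^\perp$, whence $\ov\VV=M^\perp/M=\ker(\VV/M\to\VV/M^\perp)$ is the kernel of a surjection of bundles and hence locally free. The form on $\VV$ descends to $\ov\VV$ (it kills $M$ on $M^\perp$) and is nondegenerate there, because its radical is $(M^\perp\cap(M^\perp)^\perp)/M=(M^\perp\cap M)/M=0$, using $(M^\perp)^\perp=M$ (a rank count). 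This is the standard symplectic reduction.

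The hypothesis has two equivalent forms that I will use throughout. The map $L_2\to M^\vee$ is dual to $M\to L_2^\vee,\ v\mapsto(v,\cdot)|_{L_2}$, whose kernel is $M\cap L_2^\perp=M\cap L_2$ (since $L_2^\perp=L_2$ by a rank count); hence its surjectivity is equivalent to $M\cap L_2=0$, and it is equally equivalent to $\VV=L_2+M^\perp$. Now $M\sub L_1$ forces $L_1\sub L_1^\perp\sub M^\perp$, so $\ov L_1=L_1/M$ is an isotropic subbundle of $\ov\VV$ of half rank, hence maximal isotropic. Likewise $L_2\cap M^\perp=\ker(L_2\to M^\vee)$ is a subbundle of half rank, and since $M\cap L_2=0$ it maps isomorphically onto its image $\ov L_2\sub\ov\VV=M^\perp/M$, again maximal isotropic.

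For the quasi-isomorphism, recall $C^\bullet(\VV;L_1,L_2)=[L_1\to\VV/L_2]$. I build a chain map to $C^\bullet(\ov\VV;\ov L_1,\ov L_2)=[\ov L_1\to\ov\VV/\ov L_2]$ by the projection $L_1\to L_1/M$ in degree $0$ and, in degree $1$, the composite $\VV/L_2\cong M^\perp/(L_2\cap M^\perp)\twoheadrightarrow M^\perp/((L_2\cap M^\perp)+M)=\ov\VV/\ov L_2$, where the isomorphism is induced by $M^\perp\hookrightarrow\VV$ and uses $\VV=L_2+M^\perp$; the square commutes because $L_1\sub M^\perp$. Both vertical maps are surjective, with degree-$0$ kernel $M$ and degree-$1$ kernel $((L_2\cap M^\perp)+M)/(L_2\cap M^\perp)\cong M/(M\cap L_2)=M$. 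This yields a short exact sequence of complexes $0\to K^\bullet\to C^\bullet(\VV;L_1,L_2)\to C^\bullet(\ov\VV;\ov L_1,\ov L_2)\to 0$ with $K^\bullet=[M\to M]$.

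The crux is that $K^\bullet$ is acyclic: the differential of $C^\bullet(\VV)$ sends $m\in M\sub L_1$ to $m\bmod L_2$, which is exactly the class representing $m$ in the degree-$1$ kernel under the identifications above, so $K^\bullet=[M\rTo{\id}M]$ and the long exact cohomology sequence forces the chain map to be a quasi-isomorphism. I expect the only real work to be bookkeeping rather than a conceptual gap: confirming that each intersection and quotient is genuinely a subbundle (all arise as kernels of surjections of bundles), and pinning down the identification of the degree-$1$ kernel with $M$ under which the induced differential becomes the identity. Everything is driven by the two forms of the hypothesis, $M\cap L_2=0$ and $\VV=L_2+M^\perp$, which I would isolate at the very start.
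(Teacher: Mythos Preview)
Your proof is correct and follows essentially the same approach as the paper: both construct the chain map $[L_1\to\VV/L_2]\to[L_1/M\to\ov\VV/\ov L_2]$ via the obvious projections and the identification $\VV/(M+L_2)\cong M^\perp/(M+L_2\cap M^\perp)$ coming from $\VV=L_2+M^\perp$. You simply fill in more of the bookkeeping (local freeness, nondegeneracy of the reduced form, the explicit acyclic kernel $[M\rTo{\id}M]$) that the paper leaves to the reader.
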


\begin{proof} 
Note that $\ov{L}_2$ embeds into $\ov{\VV}$ since $L_2\cap M\sub (L_2+M^\perp)^\perp=0$.
One can easily check that $\ov{L}_1$ and $\ov{L}_2$ are maximal isotropic in $\ov{\VV}$.
Now we use a quasi-isomorphism of complexes
$$[L_1\to \VV/L_2]\to [L_1/M\to \VV/(M+L_2)],$$
and observe that the natural map
$$\ov{\VV}/\ov{L}_2\simeq M^{\perp}/(M+L_2\cap M^\perp)\to \VV/(M+L_2)$$
is an isomorphism.
\end{proof}

\subsection{Determinants}\label{det-sec}

Recall that when a complex of vector bundles of even rank, concentrated in degrees $0$ and $1$,
$C^\bullet=[C^0\rTo{d} C^1]$, is generically exact then we have a natural section
$$\th_{C^\bullet}=\det(d)\in \Det(C^\bullet)^{-1}=\Det(C_0)^{-1}\ot \Det(C_1),$$
which vanishes precisely on the locus where $C^\bullet$ fails to be exact. 
Similarly, for an arbitrary complex of (super)vector bundles $C^\bullet=[C^0\rTo{d} C^1]$,
exact over an open subset $U$, there is a natural invertible section
$$\th=\th_{C^\bullet}=\ber(d)\in H^0(U,\Ber(C^\bullet)^{-1}).$$
The key property is that a quasi-isomorphism of complexes $C^\bullet\to D^\bullet$ induces an isomorphism
$\Ber(C^\bullet)\rTo{\sim} \Ber(D^\bullet)$ such that $\th_{C^\bullet}$ gets identified with $\th_{D^\bullet}$.

Now let $L_1,L_2\sub \VV$ be a pair of maximal isotropic subbundles in a symplectic bundle.
Assume that $L_1$ and $L_2$ are transversal over a nonempty open $U\sub S$.
Then we define
$$\th(L_1,L_2)\in \Ber(C^\bullet(\VV;L_1,L_2))^{-1}(U)\simeq \Ber(L_1)^{-1}\ot \Ber(L_2)^{-1}|_U$$
to be the canonical (even) nonvanishing section corresponding to the exact complex $C^\bullet(\VV;L_1,L_2)|_U$.
Equivalently, $\th(L_1,L_2)$ is the Berezinian of the isomorphism of bundles
$$L_1|_U\to L_2^\vee|_U$$
induced by the symplectic form. Note that the line bundle $\Ber(C^\bullet(\VV;L_1,L_2))$ has rank $1|0$, so
its even sections are given locally by even functions.

Similarly to the even case, a quasi-isomorphism of exact complexes leads to an isomorphism of their Berezinians compatible
with their canonical sections coming from the Berezinian of the differential.
Thus, in the context of Lemma \ref{reduction-lem},
the sections $\th(L_1,L_2)$ and $\th(\ov{L}_1,\ov{L}_2)$ get identified under the isomorphism between the corresponding
line bundles.

\begin{theorem}\label{Lagr-int-Ber-thm} 
Let $L_1,L_2\sub \VV$ be a pair of maximal isotropic subbundles in a symplectic bundle over $S$, 
transversal over some open $j:U\hra S$ such that
the natural map $\OO_S\to j_*\OO_U$ is injective.
Assume that for some point $s\in S$,
one has 
$$L_{1s}^+\cap L_{2s}^+=0.$$
Then locally near $s$ there exists a trivialization of $\Ber(C^\bullet(\VV;L_1,L_2))$ such that $\th^{-1}(L_1,L_2)$ corresponds to $f^2$, where $f$
is a regular function. Furthermore, $f$ belongs to the ideal $\II(C^\bullet(\VV,L_1,L_2))$. In other words,
$f\de^{-1}$ is regular near $s$, where $\de:L_1\to \VV/L_2$ is the natural map.
\end{theorem}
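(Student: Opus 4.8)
The plan is to reduce $C^\bullet(\VV;L_1,L_2)$ to a \emph{symmetric} representative and then read off its Berezinian as a Pfaffian squared. First I would use that the condition $L_{1s}^+\cap L_{2s}^+=0$ is open, so after shrinking $S$ I may assume $L_1^+$ and $L_2^+$ are transversal in $\VV^+$ throughout a neighbourhood of $s$; equivalently, the even subcomplex $[L_1^+\to \VV^+/L_2^+]$ of $C^\bullet$, a complex of bundles of equal rank $m$, has invertible differential near $s$. By Proposition~\ref{selfdual-rep-prop}(ii) — passing through the enlargement $\VV'=\VV\oplus(\OO e_1\oplus \OO e_2)$ of loc.\ cit.\ when $n$ is odd, which leaves the even parts untouched and hence preserves the transversality of $L_1^+$ and $L_2^+$ — I may replace $C^\bullet(\VV;L_1,L_2)$ by a quasi-isomorphic symmetric complex $\phi\colon M\to M^\vee$ with $\phi^*=\phi$ and $M$ of rank $m|n'$ with $n'$ \emph{even}. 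Since quasi-isomorphisms of complexes of supervector bundles restrict to quasi-isomorphisms of their even subcomplexes, the even block $A:=\phi^+\colon M^+\to(M^+)^\vee$ is again invertible near $s$.

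The key computation is the Berezinian of $\phi$ via the Schur complement. Writing
\[
\phi=\left(\begin{matrix} A & B \\ C & D\end{matrix}\right)
\]
in even/odd blocks, super-symmetry $\phi^*=\phi$ forces $A$ to be symmetric and the odd-odd block $D\colon M^-\to(M^-)^\vee$ to be skew-symmetric. With $A$ invertible near $s$ the Berezinian formula gives
$$
\ber(\phi)=\frac{\det A}{\det S},\qquad S:=D-C A^{-1}B,
$$
where $S$ is again skew-symmetric of \emph{even} size $n'$, so $\det S=\Pfaff(S)^2$. Under the canonical identification of $\th(L_1,L_2)$ with $\ber(\phi)$ coming from the quasi-isomorphism (see Section~\ref{det-sec}) this yields $\th^{-1}(L_1,L_2)=(\det A)^{-1}\,\Pfaff(S)^2$. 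As $\det A$ is a unit near $s$, I would absorb $(\det A)^{-1}$ into the trivialization of $\Ber(C)$, so that $\th^{-1}$ corresponds to $f^2$ with $f:=\Pfaff(S)$ a regular function, proving the first assertion.

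For the ideal membership I would invoke the Pfaffian form of Cramer's rule: for an invertible skew-symmetric $S$ one has $\Pfaff(S)\,S^{-1}=\adj(S)/\Pfaff(S)$, whose entries are sub-Pfaffians of $S$ and hence regular. Combining this with the block-inverse (Schur) formula for $\phi^{-1}$ — each block of which is built from $A^{-1}$ (regular), the off-diagonal blocks, and $S^{-1}$ entering only through the regular combination $\Pfaff(S)S^{-1}$ — shows that $f\,\phi^{-1}=\Pfaff(S)\,\phi^{-1}$ is regular near $s$. By Lemma~\ref{complex-support-lem}(iv) this means $f\in\II([M\to M^\vee])$, and since $\II$ is a quasi-isomorphism invariant (Lemma~\ref{complex-support-lem}(i)) we obtain $f\in\II(C(\VV;L_1,L_2))$; applying Lemma~\ref{complex-support-lem}(iv) once more to the original complex gives that $f\de^{-1}$ is regular near $s$, as claimed.

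The main obstacle I anticipate is the bookkeeping in the last paragraph: one must check that the Schur decomposition of $\phi^{-1}$ exhibits $f\phi^{-1}$ with \emph{all} denominators cleared by a single factor $\Pfaff(S)$ rather than $\Pfaff(S)^2=\det S$, which is precisely what the Pfaffian adjugate supplies and is the source of the square-root behaviour $\th^{-1}=f^2$. A secondary point requiring care is the passage to even $n'$ together with the verification that the even transversality at $s$ survives both the quasi-isomorphism to the symmetric model and the auxiliary enlargement of $\VV$.
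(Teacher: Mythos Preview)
Your argument is correct and reaches the same conclusion as the paper, but by a somewhat different route. The paper first applies the \emph{reduction} Lemma~\ref{reduction-lem} with $M=L_1^+$: since $L_{1s}^+\cap L_{2s}^+=0$, the pairing $L_2\to (L_1^+)^\vee$ is surjective near $s$, and the symplectic reduction $\ov\VV=(L_1^+)^\perp/L_1^+$ carries Lagrangians $\ov L_1=L_1/L_1^+$ of rank $0|n$ and $\ov L_2$, with $C^\bullet(\VV;L_1,L_2)\simeq C^\bullet(\ov\VV;\ov L_1,\ov L_2)$. Only \emph{then} does it invoke Proposition~\ref{selfdual-rep-prop}, obtaining a symmetric model $[M\xrightarrow{\phi}M^\vee]$ with $M$ of rank $0|n'$. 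Thus $\ber(\phi)^{-1}=\det(\phi)$ is directly the determinant of a skew-symmetric even matrix, giving $\th^{-1}=\Pfaff(\phi)^2$ without any Schur complement; the ideal statement then follows from regularity of $\Pfaff(\phi)\phi^{-1}$.

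Your approach skips Lemma~\ref{reduction-lem} and works with the mixed-rank symmetric model $M$ of rank $m|n'$, extracting the Pfaffian via the Schur complement $S=D-CA^{-1}B$. This is valid, but two points deserve tightening. First, your sentence ``quasi-isomorphisms restrict to quasi-isomorphisms of even subcomplexes'' is not well-posed over a superscheme, since $M^+$ is only an $\OO_S^+$-module. What you actually need (and what holds) is the fiberwise statement at $s$: for $n$ even Proposition~\ref{selfdual-rep-prop}(i) gives an \emph{isomorphism} $C^\bullet(\VV;L_1,L_2)\cong[L_2\xrightarrow{\phi}L_2^\vee]$ with $L_1$ the graph of $\phi$, whence $\ker(A|_s)=L_{1s}^+\cap L_{2s}^+=0$; the odd-$n$ enlargement leaves even parts untouched. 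Second, the skew-symmetry of $S$ requires a short sign computation using that $B,C$ have odd entries (so $B^TA^{-1}B$ is skew). With these clarifications your block-inverse argument for regularity of $f\phi^{-1}$ goes through. In short: the paper's reduction step replaces your Schur-complement bookkeeping by a cleaner symplectic-geometric move, but the two arguments are equivalent in substance.
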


\begin{proof} Locally near $s$ we can define a decomposition $L_1=L_1^+\oplus L_1^-$ where $L_1^+$ has rank $m|0$
and $L_2^-$ has rank $0|n$. 
Let us consider the isotropic subbundle $L_1^+\sub \VV$.
Then our assumption implies that in a neighborhood of $s$ the morphism $L_2\to (L_1^+)^\vee$ is surjective,
so by Lemma \ref{reduction-lem}, we have a quasi-isomorphism of $C^\bullet(\VV;L_1,L_2)$ with
$C^\bullet(\ov{\VV}, \ov{L}_1,\ov{L}_2)$, where $\ov{\VV}=(L_1^+)^\perp/L_1^+$ and
$\ov{L}_1=L_1/L_1^+$ has rank $0|n$. 
Now by Proposition \ref{selfdual-rep-prop}, there exists a quasi-isomorphism
$$C^\bullet(\ov{\VV},\ov{L}_1,\ov{L}_2)\simeq [M\rTo{\phi} M^\vee],$$
with $M$ of rank $0|n'$ and $\phi$ selfdual. But this means that actually
$\th^{-1}(L_1,L_2)=\ber(\phi)^{-1}$ is given by the determinant of a skew-symmetric matrix with entries in $\OO_S^+$. 
It remains to set 
$$f=\Pfaff(\phi),$$
where $\Pfaff(?)$ denotes the Pfaffian. 

For the last assertion, we recall that by Lemma \ref{complex-support-lem}(i), one has
$$\II(C(\VV,L_1,L_2))\simeq \II(M\rTo{\phi}M^\vee).$$
Now the assertion follows from Lemma \ref{complex-support-lem}(iv) together with the well known fact
that $\Pfaff(\phi)\phi^{-1}$ is regular (see e.g., \cite[Eq.\ (10)]{Kriv}).
\end{proof}

We need a simple observation that allows to study $\th(L_1,L_2)$ in the presence of a Lagrangian splitting
$V=\La\oplus \La'$.

\begin{lemma}\label{triple-Lag-lemma}
Let $V=\La\oplus \La'$ be a splitting of a symplectic bundle into Lagrangian subbundles, and let
$L_1,L_2\sub V$ be Lagrangian subbundles. Assume that the pairs $(L_1,L_2)$, $(L_1,\La)$ and $(L_2,\La)$
are transversal, and let 
$$\tau_i:\La'\to \La, \ i=1,2,$$
be symmetric (with respect to the duality $\La'\simeq \La^\vee$) morphisms such that
$L_i$ is the graph of $\tau_i$.
Then under the identification $\Ber(\La')\simeq\Ber(\La)^{-1}$,
we have equality of sections of $\Ber(L_1)^{-1}\ot\Ber(L_2)^{-1}$,
$$\th(L_1,L_2)=\pm\th(L_1,\La)\th(L_2,\La)\ber(\tau_1-\tau_2).$$
\end{lemma}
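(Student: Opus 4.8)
The plan is to reduce the identity $\th(L_1,L_2)=\pm\th(L_1,\La)\th(L_2,\La)\ber(\tau_1-\tau_2)$ to a purely linear-algebraic computation of Berezinians, using the symmetric-morphism description of the three Lagrangians relative to the splitting $V=\La\oplus\La'$ afforded by Proposition \ref{selfdual-rep-prop}(i). All three sections $\th(L_1,L_2)$, $\th(L_1,\La)$, $\th(L_2,\La)$ live in tensor products of Berezinians of the relevant Lagrangians, so the first task is to trivialize all the ambient line bundles consistently, using the chosen splitting. Concretely, I would fix $\Ber(\La')\simeq\Ber(\La)^{-1}$ via the symplectic duality $\La'\simeq\La^\vee$, and then read each $\th$ as the Berezinian of an explicit isomorphism of bundles. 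For instance, by the definition in Section \ref{det-sec}, $\th(L_1,\La)$ is the Berezinian of the isomorphism $L_1|_U\to \La^\vee|_U\simeq\La'|_U$ induced by the symplectic form, and similarly for $\th(L_2,\La)$ and $\th(L_1,L_2)$.

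Next I would use the graph description to get matrix representatives. Since $L_i$ is the graph of $\tau_i:\La'\to\La$, the projection $L_i\to\La$ along $\La'$ is an isomorphism, which identifies $L_i\simeq\La$, and under this identification the various structure maps become explicit $2\times 2$ block maps between $\La$ and $\La'$. The key computation is to express the isomorphism $L_1\to L_2^\vee$ (whose Berezinian is $\th(L_1,L_2)$) as a composite involving the two isomorphisms $L_1\to\La'$ and $L_2\to\La'$ (whose Berezinians give $\th(L_1,\La)$ and $\th(L_2,\La)$) together with the map $\tau_1-\tau_2:\La'\to\La$. The heart of it is that the symplectic pairing between a vector on the graph of $\tau_1$ and a vector on the graph of $\tau_2$ picks out exactly the symmetric combination encoding $\tau_1-\tau_2$, because the diagonal contributions coming from the symmetry $\tau_i^*=\tau_i$ cancel against the canonical pairing of $\La$ with $\La'$. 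Once this factorization of isomorphisms is written down, the multiplicativity of the Berezinian, $\ber(AB)=\ber(A)\ber(B)$, yields the stated product formula, with the sign ambiguity arising from the identifications $\Ber(L_i)\simeq\Ber(\La)$ and $\Ber(\La')\simeq\Ber(\La)^{-1}$.

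The main obstacle I anticipate is bookkeeping of signs and of the Berezinian conventions in the super setting, rather than any conceptual difficulty: the factor-of-$\pm$ in the statement is a warning that keeping track of the precise isomorphisms $\Ber(L_1)^{-1}\ot\Ber(L_2)^{-1}$ versus $\Ber(\La)^{\pm}$ requires care, especially because odd directions contribute inverse Berezinian factors. The cleanest way to manage this is to choose local frames adapted to the splitting so that $\La$, $\La'$ are coordinatized by dual bases, write each Lagrangian as a graph in block form $\left(\begin{smallmatrix}\id\\ \tau_i\end{smallmatrix}\right)$, and then compute the three relevant block isomorphisms directly; the cancellation of the symmetric parts $\tau_1,\tau_2$ in the cross pairing is what makes $\tau_1-\tau_2$ (and not some less symmetric expression) appear. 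I would verify the identity at a generic point where all transversality hypotheses hold, and since both sides are nonvanishing even sections over the common open set of transversality, the equality of local expressions extends to equality of sections by continuity.

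Finally, I would note that the transversality hypotheses $(L_1,L_2)$, $(L_1,\La)$, $(L_2,\La)$ are exactly what guarantees that all three $\th$'s are invertible and that the graph descriptions are valid, so there is no loss of generality in carrying out the computation pointwise over the locus where all three pairs are transversal; the resulting identity of invertible sections then holds over that open locus as claimed.
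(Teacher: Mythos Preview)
Your approach is essentially the same as the paper's: both arguments parametrize $L_i$ via the isomorphism $\La'\to L_i$, $\la'\mapsto \tau_i(\la')+\la'$, and then compute the symplectic pairing between two such graph vectors, using the symmetry of $\tau_2$ to obtain $((\tau_1-\tau_2)\la'_1,\la'_2)$, whence the Berezinian factorization follows. One small slip to fix: since $L_i$ is the graph of $\tau_i:\La'\to\La$, it is the projection $L_i\to\La'$ along $\La$ that is always an isomorphism (your block form $\left(\begin{smallmatrix}\id\\ \tau_i\end{smallmatrix}\right)$ already encodes this), not the projection to $\La$ along $\La'$, which would be $\tau_i$ and need not be invertible.
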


\begin{proof}
By the definition, $\pm\th(L_1,L_2)\th(L_1,\La)^{-1}\th(L_2,\La)^{-1}$ is the Berezinian
of the pairing on $\La'$ given as the composition
$$\La'\ot \La'\rTo{\a_1\ot \a_2} L_1\ot L_2\rTo{(\cdot,\cdot)} \OO,$$
where $(\cdot,\cdot)$ is the symplectic form on $V$, and $\a_i:\La'\to L_i$ are isomorphisms given by
$$\a_i:\La'\to L_i: \la'\mapsto \tau_i(\la')+\la'.$$
Using the fact that $\tau_2$ is symmetric, we compute
\begin{align*}
&(\a_1(\la'_1),\a_2(\la'_2))=(\tau_1(\la'_1)+\la'_1,\tau_2(\la'_2)+\la'_2)=(\tau_1(\la'_1),\la'_2)-(\tau_2(\la'_2),\la'_1)\\
&=((\tau_1-\tau_2)(\la'_1),\la'_2).
\end{align*}
This implies our formula.
\end{proof}

\subsection{Classical picture for theta-characteristic}\label{theta-char-sec}

We are inspired by the following classical isotropic intersection picture for theta-characteristics (see e.g., 
\cite{Mumford1972}, \cite{Harris1982}).

Let $C$ be a curve with a theta-characteristic $\LL$, and let $p\in C$ be a point, $D$ a formal punctured disk around $p$.
Then we have a natural symmetric pairing on $H^0(D,\LL)$ given by
$$b(s,t)=\Res_p(st),$$
where we use the isomorphism $\LL^{\ot 2}\simeq \om_C$ to view $st$ is a $1$-form on $D$.
This induces a nondegenerate pairing on 
$$V=H^0(C,\LL(np)/\LL(-np))$$ 
for any $n>0$, such that the subspace
$$L_2:=H^0(C,\LL/\LL(-np))\sub V$$
is maximal isotropic.
Furthermore, for $n$ large enough, the map $H^0(C,\LL(np))\to V$ is injective, and one can check that
$$L_1=H^0(C,\LL(np))\sub V$$
is also maximal isotropic.
Now the complex $C^\bullet(V;L_1,L_2)=[L_1\to V/L_2]$ can be identified with the complex
$$H^0(C,\LL(np))\to H^0(C,\LL(np)/\LL)$$
computing the cohomology $H^*(C,\LL)$.

This also makes sense for families $(C,\LL)/S$. If we further assume that generically $\LL$ has no nonzero sections
then Proposition \ref{selfdual-rep-prop} implies that locally we can find a quasi-isomorphism of $R\pi_*\LL$
with a complex of the form
$$M\rTo{\phi} M^\vee$$
on $S$, with $\phi^*=-\phi$. It follows that we can use the Pfaffian of $\phi$ as a local equation of the locus where
$\LL$ acquires nonzero sections. 

Applying this to a universal family over the moduli space $\SS_{\bos}^+$ of curves with even theta-characteristics,
we get a canonical structure of an effective Cartier divisor on the locus of $(C,\LL)$ with $h^0(\LL)>0$.
Furthermore, the first order computation in \cite[Sec.\ 1]{Nagaraj1990} implies that this divisor is smooth at all points $(C,\LL)$
where $h^0(\LL)=2$.

\subsection{Symplectic structure for the usual family of curves}
\label{even-symp-sec}

For clarity let us consider the even picture first. 

Note that for a family of curves $\pi:C\to S$ the Hodge bundle $\pi_*\om_C$ embeds as a Lagrangian subbundle
into the symplectic bundle $R^1\pi_*\C_{C/S}$, where $\C_{C/S}=\pi^{-1}\OO_S$. 
Assuming that we have a marked point $p:S\to C$, we will construct for every $n\gg 0$, a bigger symplectic bundle $\VV=\VV_n$,
defined only in terms of data in the formal neighborhood of $p$. 
It will be equipped with an isotropic subbundle $M\sub \VV$ and a Lagrangian subbundle $L\sub\VV$ such that
the reduced symplectic bundle $M^\perp/M$ is identified with $R^1\pi_*\C_{C/S}$ and $L\cap M^{\perp}$ gets identified
with $\pi_*\om_C$. 

We have a natural skew-symmetric form $B_0$ on 
$H^0(D,\OO)$, where $D$ is a formal punctured disk around $p$, given by
$$B_0(f,g)=\Res_p(fdg).$$
We identify $p$ with the subscheme $p(S)\sub C$; in particular, we denote by $np$ the corresponding divisors in $C$ supported on on $p(S)$.

\begin{lemma}\label{sympl-even-lem}
For every $n>0$, such that $R^1\pi_*(\OO(np))=0$, the form $B_0$ induces a symplectic form on the bundle
$$\VV=\VV_n:=\pi_*(\OO_C(np)/(\C_{C/S}+\OO_C(-(n+1)p)))\simeq \coker(\OO_S\to \pi_*(\OO_C(np)/\OO_C(-(n+1)p)))$$ 
of rank $2n$.
Furthermore, $L_{\can}:=\pi_*(\OO_C/\OO(-(n+1)p))/\OO_S$ is a Lagrangian subbundle in $\VV$.
\end{lemma}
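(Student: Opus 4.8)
The plan is to reduce everything to an explicit computation in a formal relative coordinate at the section $p$, after first isolating the three intrinsic facts about the residue form $B_0$ that require no coordinates at all. To begin, I would record that $\TT:=\OO_C(np)/\OO_C(-(n+1)p)$ is a torsion sheaf, supported on the closed subscheme $(2n+1)p$ which is finite over $S$; hence $\pi_*\TT$ is locally free of rank $2n+1$ and $R^1\pi_*\TT=0$ without any further hypothesis. The point to be careful about here is that, although $\C_{C/S}=\pi^{-1}\OO_S$ has nontrivial $R^1\pi_*$ (the rank-$2g$ local system), its \emph{image} inside the torsion sheaf $\TT$ is merely the sub-line-bundle of constants supported along $p$, which is a nowhere-vanishing (hence locally split) section of $\pi_*\TT$ with pushforward $\OO_S$ and vanishing $R^1\pi_*$. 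This is exactly what makes the stated cokernel description $\VV_n\simeq\coker(\OO_S\to\pi_*\TT)$ valid and shows $\VV_n$ is locally free of rank $2n$. (The hypothesis $R^1\pi_*\OO_C(np)=0$ is not needed for this; it serves the later reduction picture, where one wants $n$ large.)

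Next I would establish the coordinate-free properties of $B_0$. Skew-symmetry is immediate from $B_0(f,g)+B_0(g,f)=\Res_p(f\,dg+g\,df)=\Res_p(d(fg))=0$, since the residue of an exact one-form vanishes. By the same token $B_0(c,g)=c\,\Res_p(dg)=0$ for any fiberwise constant $c$, so the constants lie in the radical of $B_0$ on $\pi_*\TT$; this is precisely the condition needed for $B_0$ to descend to the quotient $\VV_n$. Finally, if $f$ and $g$ are both regular at $p$ then $f\,dg$ is a regular one-form, so $B_0(f,g)=0$; therefore the image of $\pi_*(\OO_C/\OO_C(-(n+1)p))$ in $\VV_n$, and in particular its quotient $L_{\can}$, is isotropic.

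To upgrade ``$B_0$ descends'' to ``$B_0$ is symplectic'' I would pass to a relative coordinate $z$ trivializing the formal neighbourhood of $p$, so that a section of $\pi_*\TT$ reads $\sum_{i=-n}^{n}a_iz^i$. A direct residue computation gives $B_0\bigl(\sum a_iz^i,\sum b_jz^j\bigr)=-\sum_i i\,a_ib_{-i}$. In the induced basis $\{z^{\pm1},\dots,z^{\pm n}\}$ of $\VV_n$ (the class of $z^0$ being killed) the Gram matrix is block diagonal, the block pairing $z^i$ with $z^{-i}$ being $\left(\begin{smallmatrix}0&-i\\ i&0\end{smallmatrix}\right)$, of nonzero determinant $i^2$. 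Hence the form is nondegenerate of rank $2n$, i.e.\ symplectic. In the same basis $L_{\can}$ is the span of $z,\dots,z^n$, a local direct summand of rank $n=\tfrac12\rk\VV_n$; being isotropic by the previous paragraph, it is Lagrangian.

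The only genuine subtlety, and the step I would watch most closely, is the sheaf-theoretic bookkeeping of the first paragraph: distinguishing $\C_{C/S}$ from its image in the torsion sheaf $\TT$. Once that distinction is handled correctly, the clean cokernel description and the nondegeneracy-after-quotient both follow, and the remainder is the routine residue computation above.
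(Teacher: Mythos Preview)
Your proof is correct. The paper's own argument takes a slightly different route: rather than working directly in $\pi_*\TT$, it applies the relative de Rham differential to identify $\VV$ with $\pi_*(\om_{C/S}((n+1)p)^{ex}/\om_{C/S}(-np))$, the point being that $d$ kills precisely $\C_{C/S}$, which simultaneously yields the cokernel description and removes the constants; from this $\om$-side picture it then declares the symplectic and Lagrangian claims ``clear''. Your approach stays on the $\OO$-side throughout, establishes the cokernel identification by analyzing the image of $\C_{C/S}$ in the torsion sheaf directly, and makes the nondegeneracy explicit by writing out the Gram matrix in the basis $z^{\pm i}$. This buys you a more self-contained and elementary argument, and your parenthetical remark that the hypothesis $R^1\pi_*\OO_C(np)=0$ is not actually used in this lemma is correct as well---it enters only in the subsequent results of the section.
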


\begin{proof}
Let us denote by $\om_{C/S}(np)^{ex}\sub \om_{C/S}(np)$ 
the subsheaf of locally exact forms. Then we have an exact sequence 
$$0\to \C_{C/S}\to \OO_C(np)\rTo{d} \om_{C/S}((n+1)p)^{ex}\to 0$$
which induces an isomorphism of sheaves
$$\OO_C(np)/(\C_{C/S}+\OO_C(-(n+1)p))\rTo{\sim} \om_{C/S}((n+1)p)^{ex}/\om_{C/S}(-np).$$
On the other hand, arguing locally near $p$ we see
that the differential $d$ induces 
a surjective map
$$\pi_*(\OO_C(np)/\OO_C(-(n+1)p)))\to \pi_*(\om_{C/S}((n+1)p)^{ex}/\om_{C/S}(-np))$$
with the kernel $\OO_S$. This shows that the two definitions of $\VV$ agree. The fact that $B_0(f,g)$ is symplectic and $L_{\can}$ is Lagrangian
is clear from the second definition of $\VV$. 
\end{proof}

\begin{lemma} Assume that $n\ge g$ is such that $R^1\pi_*(\OO(np))=0$.
Then the natural map 
$$\wt{L}_{\can}:=\pi_*(\OO_C(np)/\C_{C/S})\to \VV$$ 
is an embedding of a subbundle of rank $n+g$.
\end{lemma}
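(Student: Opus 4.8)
The plan is to transport the whole picture to coherent sheaves by means of the derivation $d$, which lets us compute all the relevant direct images and sidesteps the delicate behaviour of the constant sheaf $\C_{C/S}$ under intersections. Recall from the proof of Lemma~\ref{sympl-even-lem} the exact sequence
$$0\to \C_{C/S}\to \OO_C(np)\rTo{d}\om_{C/S}((n+1)p)^{ex}\to 0$$
and the resulting isomorphism $\OO_C(np)/(\C_{C/S}+\OO_C(-(n+1)p))\rTo{\sim}\om_{C/S}((n+1)p)^{ex}/\om_{C/S}(-np)$. Applying $d$ to the quotient $\OO_C(np)/\C_{C/S}$, I would identify $\wt L_{\can}=\pi_*(\OO_C(np)/\C_{C/S})\simeq\pi_*(\om_{C/S}((n+1)p)^{ex})$, under which $\wt L_{\can}\to\VV$ becomes the map on $\pi_*$ induced by the sheaf projection $\om_{C/S}((n+1)p)^{ex}\to\om_{C/S}((n+1)p)^{ex}/\om_{C/S}(-np)$. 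The key point is that the kernel of this projection is precisely $\om_{C/S}(-np)$ (a form vanishing to order $\ge n$ at $p$ has zero residue, hence lies in the locally exact subsheaf), so the entire analysis reduces to applying $R\pi_*$ to the short exact sequence of coherent sheaves
$$0\to \om_{C/S}(-np)\to \om_{C/S}((n+1)p)^{ex}\to \QQ\to 0,\quad \QQ:=\om_{C/S}((n+1)p)^{ex}/\om_{C/S}(-np).$$

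First I would establish injectivity of $\wt L_{\can}\to\VV$. From the long exact sequence its kernel is $\pi_*\om_{C/S}(-np)$; by relative Serre duality $\pi_*\om_{C/S}(-np)\simeq (R^1\pi_*\OO_C(np))^\vee$, which vanishes by the hypothesis $R^1\pi_*\OO_C(np)=0$. Hence $\wt L_{\can}\to\VV$ is injective. Note that the sheaves $\om_{C/S}((n+1)p)^{ex}$ and $\om_{C/S}(-np)$ agree away from $p$, so $\QQ$ is a torsion sheaf supported on the image of the section $p$; in particular $R^1\pi_*\QQ=0$.

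To see that the image is a subbundle I would show that $\coker(\wt L_{\can}\to\VV)$ is locally free. Using $\pi_*\om_{C/S}(-np)=0$, the long exact sequence identifies this cokernel with $\ker\beta$ for the connecting map $\beta\colon R^1\pi_*\om_{C/S}(-np)\to R^1\pi_*\om_{C/S}((n+1)p)^{ex}$. Here $R^1\pi_*\om_{C/S}(-np)\simeq(\pi_*\OO_C(np))^\vee$ is locally free of rank $n-g+1$, while the sequence of Lemma~\ref{sympl-even-lem}, together with $R^1\pi_*\OO_C(np)=0$ and $R^2\pi_*\C_{C/S}\simeq\OO_S$, gives $R^1\pi_*\om_{C/S}((n+1)p)^{ex}\simeq\OO_S$. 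Since $R^1\pi_*\QQ=0$, the map $\beta$ is surjective onto the free rank-one module $\OO_S$; therefore $\beta$ splits and $\ker\beta$ is a locally free direct summand of rank $n-g$. Thus $0\to\wt L_{\can}\to\VV\to\ker\beta\to 0$ exhibits $\wt L_{\can}$ as a subbundle (and as locally free, being an extension of the kernel of $\VV\to\ker\beta$), and the rank of $\wt L_{\can}$ is $2n-(n-g)=n+g$, as claimed.

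I expect the main obstacle to be the bookkeeping around the locally exact subsheaf $\om_{C/S}((n+1)p)^{ex}$: one must check carefully that the projection kernel is exactly $\om_{C/S}(-np)$ and that $R^1\pi_*\om_{C/S}((n+1)p)^{ex}$ is $\OO_S$ rather than something larger, which is precisely the point at which the constant sheaf $\C_{C/S}$ re-enters through $R^2\pi_*\C_{C/S}\simeq\OO_S$. Once these identifications are in place, the rest of the argument is formal homological algebra.
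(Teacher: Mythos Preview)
Your proof is correct and follows essentially the same route as the paper: identify $\wt L_{\can}\to\VV$ with $\pi_*\om_{C/S}((n+1)p)^{ex}\to\pi_*(\om_{C/S}((n+1)p)^{ex}/\om_{C/S}(-np))$, then analyze the long exact sequence of the short exact sequence with kernel $\om_{C/S}(-np)$, using Serre duality and the hypothesis $R^1\pi_*\OO_C(np)=0$.

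There are two minor differences worth noting. First, the paper reads off the rank $n+g$ at the outset from the four-term exact sequence $0\to\OO_S\to\pi_*\OO_C(np)\to\wt L_{\can}\to R^1\pi_*\C_{C/S}\to 0$, whereas you deduce it at the end from $\operatorname{rk}\VV-\operatorname{rk}\ker\beta=2n-(n-g)$. Second, to obtain $R^1\pi_*\om_{C/S}((n+1)p)^{ex}\simeq\OO_S$ the paper uses the exact sequence $0\to\om_{C/S}((n+1)p)^{ex}\to\om_{C/S}((n+1)p)\to\OO_p\to 0$ together with the residue theorem, while you use the long exact sequence of $0\to\C_{C/S}\to\OO_C(np)\to\om_{C/S}((n+1)p)^{ex}\to 0$ and $R^2\pi_*\C_{C/S}\simeq\OO_S$. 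Your argument via the surjectivity of $\beta$ (from $R^1\pi_*\QQ=0$) and the splitting onto the free rank-one target is in fact a bit more explicit about why the cokernel is locally free than the paper's phrasing, which simply asserts that local freeness of the two $R^1$ terms suffices.
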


\begin{proof}
We have an exact sequence
\begin{equation}\label{symp-red-ti-L1-ex-seq}
0\to \OO_S\to \pi_*(\OO_C(np))\to \wt{L}_{\can}\rTo{\ga} R^1\pi_*\C_{C/S}\to 0
\end{equation}
which shows that $\wt{L}_{\can}$ is a bundle of rank $n+g$ (we use Riemann-Roch to see that
$\pi_*(\OO_C(np))/\OO_S$ has rank $n-g$). 
Now the map $\wt{L}_{\can}\to \VV$ can be identified with 
$$\pi_*(\om_{C/S}((n+1)p)^{ex})\to \pi_*(\om_{C/S}((n+1)p)^{ex}/\om_{C/S}(-np)).$$
The fact that it is injective with locally free quotient can be deduced from the long exact
sequence associated with the exact sequence
$$0\to \om_{C/S}(-np)\to \om_{C/S}((n+1)p)^{ex}\to \om_{C/S}((n+1)p)^{ex}/\om_{C/S}(-np)\to 0$$
Indeed, it is enough to see that $\pi_*(\om_{C/S}(-np))=0$ and the sheaves
$R^1\pi_*(\om_{C/S}(-np))$ and $R^1\pi_*(\om_{C/S}((n+1)p)^{ex})$ are locally free.
Since $R^1\pi_*(\OO_C(np))=0$, by Grothendieck-Serre duality, we obtain that $\pi_*(\om_{C/S}(-np))=0$
and $R^1\pi_*(\om_{C/S}(-np))$ is locally free. On the other hand, the exact sequence
$$0\to \om_{C/S}((n+1)p)^{ex}\to \om_{C/S}((n+1)p)\to \OO_p\to 0$$
gives a long exact sequence
$$\pi_*(\om_{C/S}((n+1)p))\to \OO_S\to R^1\pi_*(\om_{C/S}((n+1)p)^{ex})\to R^1\pi_*(\om_{C/S}((n+1)p))=0$$
where the first map is zero by the residue theorem, so $R^1\pi_*(\om_{C/S}((n+1)p)^{ex})\simeq \OO_S$.
\end{proof}

Note that the bundle $R^1\pi_*\C_{C/S}$ has a symplectic structure induced by the multiplication
$$R^1\pi_*\C_{C/S}\otimes R^1\pi_*\C_{C/S}\to R^2\pi_*\C_{C/S}\simeq \OO_S.$$
We are going to show that the symplectic bundle $R^1\pi_*\C_{C/S}$ is obtained from $\VV$ by
a reduction with respect to an isotropic subbundle.
We will deduce this from a more general result concerning the skew-symmetric form on 
$$\WW:=\pi_*(\OO_C(\infty p)/\C_{C/S})$$
induced by $B_0(f,g)=\Res_p(fdg)$.

\begin{prop}\label{symp-H1-prop} 
(i) The map 
$$\ga:\WW\to R^1\pi_*\C_{C/S}$$
is compatible with the skew-symmetric forms.

\noindent
(ii) Assume that $n>0$ is such that $R^1\pi_*(\OO(np))=0$.
Then the subbundle $M:=\pi_*(\OO_C(np))/\OO_S$ in $\VV=\VV_n$ is isotropic and one has
$\wt{L}_{\can}=M^\perp$, so that the restriction of $\ga$ to $\wt{L}_{\can}\sub \WW$ induces an isomorphism
$$M^\perp/M\rTo{\sim}R^1\pi_*\C_{C/S}.$$
\end{prop}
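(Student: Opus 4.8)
The plan is to treat the two parts in sequence, deriving part (ii) almost entirely from part (i) together with the exact sequence \eqref{symp-red-ti-L1-ex-seq} and a residue computation. For part (i) the assertion is the classical identification of the residue pairing with the cup product on (relative) de Rham cohomology, which I would first reduce to a fibrewise statement, since both skew forms and the map $\ga$ are $\OO_S$-linear and compatible with base change to points of $S$. Over a fibre $C=C_s$ an element of $\WW=\pi_*(\OO_C(\infty p)/\C_{C/S})$ is given by local data $(h_0,h_1)$ with $h_0$ meromorphic near $p$, $h_1$ holomorphic on $C\smallsetminus p$, and $h_0-h_1$ locally constant on the overlap; the associated globally defined meromorphic $1$-form $\om_f:=dh_0=dh_1$ is then a differential of the second kind (pole only at $p$, and no residue since it is locally exact), and $\ga$ sends the class of $f$ to the de Rham class $[\om_f]\in H^1(C,\C)=R^1\pi_*\C_{C/S}|_s$. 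Under this dictionary the claim $B_0(f,g)=\langle\ga(f),\ga(g)\rangle$ becomes the formula $\Res_p(f\,dg)=[\om_f]\cup[\om_g]$.

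To prove this I would realize $R^1\pi_*\C_{C/S}$ as the hypercohomology of the relative de Rham complex $[\OO_C\rTo{d}\om_{C/S}]$ resolving $\C_{C/S}$, so that the cup-product pairing valued in $R^2\pi_*\C_{C/S}\simeq\OO_S$ is computed through the trace map $\tau$ of \eqref{Ber-curve-trace-map}. I would then carry out the computation in the \v Cech--de Rham double complex for the cover $\{U_0,U_1\}$, where $U_0$ is a disc around $p$ and $U_1=C\smallsetminus p$: a class $\ga(f)$ is represented by the cocycle whose $\om_{C/S}$-component on $U_1$ is $\om_f$ and whose gluing datum on the annulus $U_{01}$ is the local primitive of $\om_f$ provided by $f$; forming the cup product of two such cocycles and applying $\tau$, all contributions away from $p$ cancel and the pairing collapses to $\Res_p(f\,dg)=B_0(f,g)$. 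Equivalently, this is the classical identification of the pairing on differentials of the second kind with the residue pairing. The main obstacle is precisely this step: the two-element cover does not compute $H^1(C,\C)$ on the nose (its overlap is an annulus), so one must genuinely work with de Rham hypercohomology and keep careful track of the boundary/residue contribution at $p$, and one must verify that all identifications are natural enough to globalize over $S$.

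For part (ii) the isotropy of $M=\pi_*(\OO_C(np))/\OO_S$ is immediate from the residue theorem: for global functions $f,g$ with poles only at $p$, the form $f\,dg$ is a global meromorphic $1$-form holomorphic away from $p$, so $B_0(f,g)=\Res_p(f\,dg)=-\sum_{q\neq p}\Res_q(f\,dg)=0$. To identify $\wt L_{\can}$ with $M^\perp$ I note that \eqref{symp-red-ti-L1-ex-seq} exhibits $M$ as the image of $\pi_*(\OO_C(np))$ in $\wt L_{\can}$, hence as $\ker(\ga|_{\wt L_{\can}})$, with $\ga|_{\wt L_{\can}}$ surjective onto $R^1\pi_*\C_{C/S}$. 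Since $M\subseteq\ker\ga$, part (i) gives $B_0(u,m)=\langle\ga(u),\ga(m)\rangle=0$ for every $u\in\WW$ and $m\in M$, so in particular $\wt L_{\can}\subseteq M^\perp$. A Riemann--Roch count under the hypothesis $R^1\pi_*(\OO(np))=0$ gives $\rk M=n-g$, hence $\rk M^\perp=2n-(n-g)=n+g=\rk\wt L_{\can}$; as these are subbundles of $\VV$ with the same rank and one is contained in the other, they coincide. Finally $\ga|_{\wt L_{\can}}$ descends to the claimed isomorphism $M^\perp/M=\wt L_{\can}/M\rTo{\sim}R^1\pi_*\C_{C/S}$, which by part (i) intertwines the reduced symplectic form on $M^\perp/M$ with the cup-product form on $R^1\pi_*\C_{C/S}$.
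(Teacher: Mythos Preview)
Your proposal is correct and follows essentially the same route as the paper. For part (ii) your argument is identical to the paper's: use the exact sequence \eqref{symp-red-ti-L1-ex-seq} to identify $M=\ker(\ga|_{\wt L_{\can}})$, invoke part (i) to get $\wt L_{\can}\subseteq M^\perp$, then match ranks. (Your separate residue-theorem argument for isotropy of $M$ is fine but redundant, since $M\subset\wt L_{\can}\subset M^\perp$ already gives it.) For part (i) you phrase the computation classically---cup product in \v Cech--de Rham hypercohomology equals the residue pairing on differentials of the second kind---whereas the paper carries it out in derived-category language, representing the multiplication-by-$\ga(\ov f)$ morphism $\C_{C/S}\to\C_{C/S}[1]$ by an explicit chain map between resolutions (Lemmas \ref{Cech-gen-lem} and \ref{H1-mult-lem}); but these are the same calculation, and you correctly flag the need to work with hypercohomology rather than naive \v Cech on the two-set cover. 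The paper works relatively over $S$ throughout rather than reducing to fibres, which avoids having to argue separately that the identifications globalize, but your fibrewise reduction is legitimate since both pairings are $\OO_S$-bilinear.
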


The main idea is to use the following resolution of the relative constant sheaf, $\C_{C/S}$,
$$\C_{C/S}\to [\OO_C(\infty p)\rTo{d} \om_{C/S}(\infty p)^{ex}].$$
We will use also the following second version of the above resolution, due to the quasi-isomorphism
\begin{diagram}
\OO_C(\infty p)&\rTo{d}& \om_{C/S}(\infty p)^{ex}\\
\dTo{}&&\dTo{}\\
\OO_C(\infty p)&\rTo{d}& \om_{C/S}(\infty p)&\rTo{\Res_p} \OO_p
\end{diagram}

The map $\ga$ is induced by the morphism in the derived category
$$\wt{\ga}:\OO_C(\infty p)/\C_{C/S}\to \C_{C/S}[1],$$
represented by the map of complexes
\begin{diagram}
&&\OO_C(\infty p)/\C_{C/S}\\
&&\dTo{d}\\
\OO_C(\infty p)&\rTo{d}& \om_{C/S}(\infty p)^{ex}
\end{diagram}
Without loss of generality we can assume $S$ to be affine. 
Given a section
$\ov{f}\in H^0(S,\WW)$ $=H^0(C,\OO_C(\infty p)/\C_{C/S})$, we want to describe the corresponding
morphism in the derived category 
\begin{equation}\label{ga-ov-f-mult-eq}
\C_{C/S}\rTo{\cdot\ga(\ov{f})} \C_{C/S}[1]
\end{equation}
given by the multiplication with $\ga(\ov{f})$.

We need the following routine technical result.

\begin{lemma}\label{Cech-gen-lem} Let $\AA=[\AA^0\rTo{d}\AA^1]$ be a sheaf of dg-algebras on a topological space $Y$, and
let $c$ be a \v Cech $1$-cocycle with coefficients in $\AA$ (viewed as a complex of sheaves), with respect
to some open covering $(U_i)$ over $Y$: 
$$c=((a^0_{ij}),(a^1_i)),$$
with $a^0_{ij}\in \AA^0(U_i\cap U_j)$, $a^1_i\in\AA^1(U_i)$, so that $d(a^0_{ij})=a^1_j-a^1_i$, $a^0_{ij}+a^0_{jk}=a^0_{ik}$. 
We can think of the cohomology class $[c]$ as a morphism $\Z_Y\to \AA[1]$
in derived category of sheaves, so we have the induced map 
$$r_{[c]}: \AA[-1]\to \AA$$
given by the right multiplication with $c$.
Let $C(\AA)$ be the sheaf \v Cech complex associated with $\AA$, equipped with the standard quasi-isomorphism
$$\iota:\AA\to C(\AA).$$ 
Then $r_{[c]}$ is represented by the chain map 
$$r_c: \AA[-1]\to C(\AA)$$
with the components 
$$\AA^0\rTo{\cdot c} C^1(\AA^0)\oplus C^0(\AA^1)$$
$$\AA^1\rTo{(0,\cdot (-a^0_{ij}))} C^2(\AA^0)\oplus C^1(\AA^1).$$
\end{lemma}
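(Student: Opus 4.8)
The plan is to realize everything through the dg-algebra structure on the sheaf \v{C}ech complex. Recall that $C(\AA)$ is the total complex of the double complex with pieces $C^p(\AA^q)$, total differential $D=\delta+(-1)^p d$ (with $\delta$ the \v{C}ech differential and $d$ the internal differential of $\AA$), and product the Alexander--Whitney cup product
\[
(\alpha\cup\beta)_{i_0\cdots i_{p+q}}=(-1)^{qs}\,\alpha_{i_0\cdots i_p}\cdot\beta_{i_p\cdots i_{p+q}},\qquad \alpha\in C^p(\AA^s),\ \beta\in C^q(\AA^t),
\]
(all sections restricted to the common intersection). With this structure $C(\AA)$ is a sheaf of dg-algebras, and the augmentation $\iota\colon\AA\to C(\AA)$, placing a section in \v{C}ech degree $0$ via its restrictions, is a quasi-isomorphism of dg-algebras, since the augmented \v{C}ech complex of sheaves is exact. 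The two cocycle identities $\delta(a^0)=0$ and $\delta(a^1)=d(a^0)$ are precisely $Dc=0$, so $c$ is a total cocycle of degree $1$ representing $[c]$ in the degree-one part $C^1(\AA^0)\oplus C^0(\AA^1)$ of $C(\AA)$.

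Granting this, right multiplication by $[c]$ is represented at the cochain level by right cup product with $c$: the map $x\mapsto x\cup c$ is a chain map $C(\AA)\to C(\AA)[1]$ because $c$ is closed (graded Leibniz, $Dc=0$), and precomposing with the quasi-isomorphism $\iota$ yields the chain-level representative $r_c\colon\AA[-1]\to C(\AA)$ of $r_{[c]}$. It then remains only to evaluate $r_c(x)=\iota(x)\cup c$. For $x\in\AA^0$ the cochain $\iota(x)$ has internal degree $s=0$, so no Koszul signs intervene and
\[
\iota(x)\cup c=\big((x\,a^0_{ij}),\,(x\,a^1_i)\big)\in C^1(\AA^0)\oplus C^0(\AA^1),
\]
which is the first line of the formula. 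For $\omega\in\AA^1$ the internal degree is $s=1$: the contribution landing in $C^0(\AA^2)=0$ vanishes, whereas cupping with the \v{C}ech-degree-one factor $(a^0_{ij})$ acquires the sign $(-1)^{qs}=(-1)^{1\cdot1}=-1$, producing $\big(0,\,(-\omega\,a^0_{ij})\big)\in C^2(\AA^0)\oplus C^1(\AA^1)$, the second line.

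As a consistency check one verifies directly that $r_c$ is a chain map, using only the two cocycle relations. For $f\in\AA^0$ the $C^2(\AA^0)$-component of $D\,r_c^1(f)$ is $\delta(f\,a^0)=f\,\delta(a^0)=0$, while the $C^1(\AA^1)$-component is
\[
\delta(f\,a^1)-d(f\,a^0)=f\,d(a^0)-\big(df\cdot a^0+f\,d(a^0)\big)=-\,df\cdot a^0,
\]
which equals $r_c^2(df)$; and for $\omega\in\AA^1$ one finds $D\,r_c^2(\omega)=0$, again from $\delta(a^0)=0$. The only genuinely delicate point is the simultaneous bookkeeping of the two independent signs --- the $(-1)^p$ in the total differential $D$ and the $(-1)^{qs}$ in the cup product --- and it is exactly their interplay that forces the minus sign in front of $a^0_{ij}$; everything else is formal.
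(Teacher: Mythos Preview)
Your proof is correct and follows essentially the same route as the paper: both represent $r_{[c]}$ by lifting right multiplication to the \v{C}ech resolution $C(\AA)$, the paper via an explicit $\AA$-module map $\mu'\colon\AA\otimes C(\AA)\to C(\AA)$ composed with $\id\otimes c$, and you via the Alexander--Whitney cup product on $C(\AA)$ composed with $\iota$. Your version has the minor advantage of making the Koszul signs explicit and including the direct chain-map verification, which the paper's terse formula for $\mu'$ leaves implicit.
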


\begin{proof}
The map $r_{[c]}$ is the composition
$$\AA[-1]\simeq \AA\ot\Z_Y[-1]\rTo{\id\ot [c]} \AA\ot\AA\rTo{\mu} \AA,$$
where $\mu$ is the multiplication map.
Since $[c]$ is represented by the chain map $c:\Z_Y[-1]\to C(\AA)$,
it is enough to construct a chain map $\mu':\AA\ot C(\AA)\to C(\AA)$, such that
we have a commutative diagram
\begin{diagram}
\AA\ot \AA&\rTo{\mu}&\AA\\
\dTo{\id\ot\iota}&&\dTo{\iota}\\
\AA\ot C(\AA)&\rTo{\mu'}&C(\AA)
\end{diagram}
Then as $r_c$ we would take the composition of chain maps
$$\AA[-1]\simeq \AA\ot\Z_Y[-1]\rTo{\id\ot c} \AA\ot C(\AA)\rTo{\mu'} C(\AA).$$
The map $\mu'$ is constructed in a straightforward way:
$$\mu'(a\ot (a_{i_1\ldots i_k}))=(a|_{U_{i_1\ldots i_k}}\cdot a_{i_1\ldots i_k}).$$
This gives the claimed map for $r_c$.
\end{proof}

\begin{lemma}\label{H1-mult-lem}
The morphism \eqref{ga-ov-f-mult-eq} is represented by the chain map of resolutions
\begin{diagram}
&& \OO_C(\infty p)&\rTo{d}& \om_{C/S}(\infty p)^{ex}\\
&&\dTo{\cdot d\ov{f}}&&\dTo{-\Res_p(?\cdot f)}\\
\OO_C(\infty p)&\rTo{d}&\om_{C/S}(\infty p)&\rTo{\Res_p}&\OO_p
\end{diagram}
\end{lemma}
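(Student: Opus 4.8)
The plan is to write the morphism \eqref{ga-ov-f-mult-eq} as a composition of two maps that are each easy to realize on resolutions, and then to identify the resulting chain map with the one in the statement after a change of the target resolution and a single homotopy. The starting point is that $\C_{C/S}\hra\OO_C(\infty p)$ is a morphism of sheaves of algebras, so $\WW=\OO_C(\infty p)/\C_{C/S}$ is a $\C_{C/S}$-module and multiplication by $\ov{f}$ gives a map $m_{\ov f}\colon\C_{C/S}\to\WW$. Since $\ga(\ov f)=\wt\ga(\ov f)$ and cup product with $\ga(\ov f)$ is $\C_{C/S}$-linear, the morphism \eqref{ga-ov-f-mult-eq} factors as
$$\C_{C/S}\rTo{m_{\ov f}}\WW\rTo{\wt\ga}\C_{C/S}[1].$$

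Next I would realize each factor on the level of the first resolution $P^\bullet=[\OO_C(\infty p)\rTo{d}\om_{C/S}(\infty p)^{ex}]$ of $\C_{C/S}$. The map $m_{\ov f}$ lifts to a chain map $P^\bullet\to\WW[0]$ whose degree-$0$ component sends $g$ to $gf\bmod\C_{C/S}$ for a local lift $f$ of $\ov f$; on $\ker d=\C_{C/S}$ this is exactly $m_{\ov f}$, and the organisation of these local lifts into a global chain map is precisely the content of Lemma \ref{Cech-gen-lem}. The morphism $\wt\ga$ is represented, as recorded above, by the chain map $\WW[0]\to P^\bullet[1]$ whose only nonzero component is $d\colon\WW\to\om_{C/S}(\infty p)^{ex}$. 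Composing the two, the cup product \eqref{ga-ov-f-mult-eq} is represented by the chain map $P^\bullet\to P^\bullet[1]$ with degree-$0$ component $g\mapsto d(gf)$ and vanishing degree-$1$ component.

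It remains to pass to the target resolution used in the statement and to clean up the degree-$0$ term. I would first compose with the quasi-isomorphism $\iota\colon P^\bullet\to[\OO_C(\infty p)\rTo{d}\om_{C/S}(\infty p)\rTo{\Res_p}\OO_p]$ to the second resolution of $\C_{C/S}$ introduced above; this is a chain map because $\Res_p$ vanishes on $\om_{C/S}(\infty p)^{ex}$, every locally exact form with poles at $p$ being $dg$ for some $g\in\OO_C(\infty p)$ and hence residue-free. After this the cup product is represented, into the shifted second resolution, by the map with degree-$0$ component $g\mapsto d(gf)=g\,d\ov f+f\,dg$ (now viewed in $\om_{C/S}(\infty p)$) and degree-$1$ component $0$. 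Finally I would homotope this to the stated map using $h$ with $h^0=0$ and $h^1=(\,\cdot\, f)\colon\om_{C/S}(\infty p)^{ex}\to\om_{C/S}(\infty p)$: the homotopy $d\,h+h\,d$ has degree-$0$ part $g\mapsto f\,dg$ and degree-$1$ part $\eta\mapsto\Res_p(f\eta)$, so subtracting it turns $d(gf)=g\,d\ov f+f\,dg$ into $g\,d\ov f$ in degree $0$ and turns $0$ into $-\Res_p(\eta f)$ in degree $1$, which is exactly the chain map in the statement.

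The main obstacle is the bookkeeping around the lift $f$. Globally $\ov f$ need not lift to a section of $\OO_C(\infty p)$, so $m_{\ov f}$ is genuinely only realized on a covering and its chain model is a \v{C}ech object; this is what Lemma \ref{Cech-gen-lem} handles, and the two passages above (the quasi-isomorphism $\iota$ and the homotopy $h^1=\cdot\, f$) serve to absorb all this scaffolding and return to the plain resolutions. One must also check that the final degree-$1$ component $\eta\mapsto-\Res_p(\eta f)$ does not depend on the local lift: replacing $f$ by $f+c$ with $c\in\C_{C/S}$ changes it by $\Res_p(\eta c)=c\,\Res_p(\eta)=0$, since $\eta\in\om_{C/S}(\infty p)^{ex}$ has no residue. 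The only analytic input is the residue theorem $\Res_p(d(gf))=0$, i.e.\ $\Res_p(g\,df)+\Res_p(f\,dg)=0$, which is exactly what makes the two displayed chain maps homotopic; keeping the signs straight in the homotopy identity is the one place that requires care.
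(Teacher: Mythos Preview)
Your factorization $\cdot\gamma(\ov f)=\wt\ga\circ m_{\ov f}$ is correct and is a pleasant way to organize the computation. The gap is in the passage from this derived-category identity to an explicit chain map out of $P^\bullet=[\OO_C(\infty p)\rTo{d}\om_{C/S}(\infty p)^{ex}]$.

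Concretely, the map $g\mapsto gf\bmod\C_{C/S}$ from $\OO_C(\infty p)$ to $\WW$ genuinely depends on the lift $f$: if $f\mapsto f+c$ with $c\in\C_{C/S}$, the map changes by $g\mapsto gc\bmod\C_{C/S}$, and $gc\notin\C_{C/S}$ unless $g$ is locally constant. So $\WW$ is \emph{not} an $\OO_C(\infty p)$-module (only a $\C_{C/S}$-module), and there is no canonical chain map $P^\bullet\to\WW[0]$ lifting $m_{\ov f}$. The same issue propagates: the composed map $g\mapsto d(gf)$ and the homotopy $h^1=\cdot f$ are only locally defined. Your final step then amounts to showing that the stated global chain map agrees \emph{locally} with the cup product; but two morphisms $\C_{C/S}\to\C_{C/S}[1]$ that agree after restriction to small opens need not agree in $D(C)$, since $H^1(C,\C_{C/S})$ does not inject into $\prod_i H^1(U_i,\C_{C/S})$.

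Your reference to Lemma \ref{Cech-gen-lem} is in the right spirit but does not do the work you assign to it: that lemma produces a chain map into the \v{C}ech resolution $C(\AA)$, not a global map $P^\bullet\to\WW[0]$ or a global homotopy. To make your approach rigorous you would have to pass to \v{C}ech complexes and track the local lifts through them---which is exactly what the paper does, only the paper applies Lemma \ref{Cech-gen-lem} directly to the dg-algebra $[\OO_C\to\om_{C/S}]$ (where there is no pole and the algebra structure is honest), represents $\gamma(\ov f)$ by the explicit \v{C}ech cocycle $c_{\ov f}=(f_{01};0,d\ov f)$, and then pushes forward along a concrete quasi-isomorphism from the \v{C}ech complex to $[\OO_C(\infty p)\to\om_{C/S}(\infty p)\to\OO_p]$. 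Your factorization would become a clean proof if you replaced the target $\WW[0]$ by its \v{C}ech resolution and carried the \v{C}ech bookkeeping through; as written, the globalization step is missing.
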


\begin{proof}
Note that since the natural map of complexes
$$[\OO_C\to \om_{C/S}]\to [\OO_C(\infty p)\to\om_{C/S}(\infty p)^{ex}]$$
is a quasi-isomorphism, it is enough to prove a similar result using the resolution
$[\OO_C\to \om_{C/S}]$ in the source of the map. 

Now let us consider the resolution for $\C_{C/S}$ obtained by first replacing
it with $[\OO\to \om_{C/S}]$ and then taking the sheaf \v Cech resolution corresponding
to the covering $C=U_0\cup U_1$, where $U_0$ is a neighborthood of $p$ and
$U_1=C\setminus p$. This resolution looks like
$$C^0(\OO)\to C^1(\OO)\oplus C^0(\om_{C/S})\to C^1(\om_{C/S}).$$
Given a closed cycle $c=(f_{01};\a_0,\a_1)\in C^1(\OO)\oplus C^0(\om_{C/S})$,
so that $\a_1-\a_0=df_{01}$ on $U_0\cap U_1$, the multiplication
by $[c]$ map $\C_{C/S}\to \C_{C/S}[1]$ is represented by the chain map
\begin{equation}\label{c-Cech-mult-map}
\begin{diagram}
&&\OO&\rTo{d}&\om_{C/S}\\
&&\dTo{\cdot c}&&\dTo{\cdot (-f_{01)}}\\
C^0(\OO)&\rTo{}&C^1(\OO)\oplus C^0(\om_{C/S})&\rTo{}&C^1(\om_{C/S})
\end{diagram}
\end{equation}
(see Lemma \ref{Cech-gen-lem}).
Now, we observe that there is a natural quasi-isomorphism 
\begin{equation}\label{Cech-quasi-isom-eq}
\begin{diagram}
C^0(\OO)&\rTo{}&C^1(\OO)\oplus C^0(\om_{C/S})&\rTo{}&C^1(\om_{C/S})\\
\dTo{}&&\dTo{}&&\dTo{\Res_p}\\
\OO_C(\infty p)&\rTo{d}&\om_{C/S}(\infty p)&\rTo{\Res_p}&\OO_p
\end{diagram}
\end{equation}
induced by the natural projections $C^0(\OO)\to \OO_C(\infty p)$,
$C^0(\om_{C/S})\to \om_{C/S}(\infty p)$ and by the residue map
$C^1(\om_{C/S})\to \OO_p$.
Composing it with the map \eqref{c-Cech-mult-map} we get that 
the multiplication is represented by the map
\begin{diagram}
&&\OO&\rTo{d}&\om_{C/S}\\
&&\dTo{\cdot \a_1}&&\dTo{-\Res_p(f_{01}\cdot ?)}\\
\OO_C(\infty p)&\rTo{d}&\om_{C/S}(\infty p)&\rTo{\Res_p}&\OO_p
\end{diagram}

Next, we claim that the class $\ga(\ov{f})$ is represented by the
closed cycle 
$$c_{\ov{f}}=(f_{01};0,d\ov{f})\in C^1(\OO)\oplus C^0(\om_{C/S}),$$
where $f_{01}\in \OO(U_0\cap U_1)$ is such that $f_{01}\equiv \ov{f}\mod \C_{C/S}$.
Indeed, this immediately follows from the fact that the image of $c_{\ov{f}}$ under
the quasi-isomorphism \eqref{Cech-quasi-isom-eq} is the cycle $d\ov{f}$ in the middle
term. Applying the above calculation to $c_{\ov{f}}$ we get the result.
\end{proof}

\begin{proof}[Proof of Proposition \ref{symp-H1-prop}]
(i) Given $\ov{f}\in H^0(C,\OO_C(\infty p)/\C_{C/S})$, the pairing with $\ov{f}$ on $H^0(S,\WW)$
is induced by the morphism of sheaves
$$R(\ov{f}):\OO_C(\infty p)/\C_{C/S}\rTo{\Res_p(?\cdot d\ov{f})}\OO_p.$$
The resolution 
$$\C_{C/S}\to [\OO_C(\infty p)\rTo{d} \om_{C/S}(\infty p)\rTo{\Res_p}\OO_p]$$
gives a morphism $\de:\OO_p\to \C_{C/S}[2]$ which induces an isomorphism
$$\OO_S=\pi_*\OO_p\rTo{\sim} R^2\pi_*(\C_{C/S}).$$
Thus, it is enough to check the commutativity of the following square in the derived category:
\begin{diagram}
\OO_C(\infty p)/\C_{C/S}&\rTo{R(\ov{f})}&\OO_p\\
\dTo{\wt{\ga}}&&\dTo{\de}\\
\C_{C/S}[1]&\rTo{\cdot \ga(\ov{f})}& \C_{C/S}[2]
\end{diagram}
By Lemma \ref{H1-mult-lem},
the composition of $\cdot \ga(\ov{f})$ with $\wt{\ga}$ corresponds to the following
composition of maps of complexes
\begin{diagram}
&&&&\OO_C(\infty p)/\C_{C/S}\\
&&&&\dTo{d}\\
&&\OO_C(\infty p)&\rTo{d}& \om_{C/S}(\infty p)^{ex}\\
&&\dTo{\cdot d\ov{f}}&&\dTo{-\Res_p(?\cdot f)}\\
\OO_C(\infty p)&\rTo{d}&\om_{C/S}(\infty p)&\rTo{\Res_p}&\OO_p
\end{diagram}
Hence, this composition is equal to the composition of $\de$ with the map
$$\OO_C(\infty p)/\C_{C/S}\to \OO_p: \ov{g}\mapsto -\Res_p(\ov{f}d\ov{g}).$$
But this is equal to $\ov{g}\mapsto \Res_p(\ov{g}d\ov{f})$ which is exactly $R(\ov{f})$.

\noindent
(ii) The exact sequence \eqref{symp-red-ti-L1-ex-seq}
shows that $M\sub \wt{L}_{\can}$ is the kernel of the surjective map
$$\ga:\wt{L}_{\can}\to R^1\pi_*\C_{C/S}.$$
Hence, by (i), we have an inclusion $\wt{L}_{\can}\sub M^\perp$.
Since these are subbundles of the same rank in $\VV$, it follows that $\wt{L}_{\can}=M^\perp$.
\end{proof}

For a Lagrangian subbundle $\Lambda\sub R^1\pi_*\C_{C/S}$, let us set
\begin{equation}\label{L1-def-eq}
L_\Lambda:=\ga^{-1}(\Lambda)\sub \wt{L}_{\can}\sub \VV.
\end{equation}

\begin{prop} \label{Lagr-quasi-isom-prop}
Assume that $n>0$ is such that $R^1\pi_*(\OO(np))=0$.
Given a Lagrangian subbundle $\Lambda\sub R^1\pi_*\C_{C/S}$, let us define $L_\Lambda$ by \eqref{L1-def-eq}
and $L_{\can}$ as in Lemma \ref{sympl-even-lem}.  
Then $L_\Lambda$ and $L_{\can}$ are Lagrangian subbundles in $\VV$, and
the complex $C^\bullet(\VV;L_\Lambda,L_{\can})$ is quasi-isomorphic to
$$[\Lambda\to R^1\pi_*\OO_C],$$
where the morphism is the composition $\Lambda\hra R^1\pi_*\C_{C/S}\to R^1\pi_*\OO_C$.
\end{prop}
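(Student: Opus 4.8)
The plan is to obtain this as a direct application of the reduction Lemma \ref{reduction-lem}, using the isotropic subbundle $M=\pi_*(\OO_C(np))/\OO_S$ from Proposition \ref{symp-H1-prop}(ii), for which $M^\perp=\wt{L}_{\can}$ and $\ga$ induces the identification $M^\perp/M\simeq R^1\pi_*\C_{C/S}$. First I would record that $L_\Lambda$ is Lagrangian: by construction $M=\ker(\ga|_{\wt{L}_{\can}})\sub L_\Lambda=\ga^{-1}(\Lambda)$ and $L_\Lambda\sub \wt{L}_{\can}=M^\perp$, so the restriction of the symplectic form to $L_\Lambda$ factors through $L_\Lambda/M=\Lambda$ and vanishes there since $\Lambda$ is isotropic; as $L_\Lambda$ has rank $n$ it is maximal isotropic. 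That $L_{\can}$ is Lagrangian is Lemma \ref{sympl-even-lem}.

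Next I would verify the hypotheses of Lemma \ref{reduction-lem} with $L_1=L_\Lambda$, $L_2=L_{\can}$ and the above $M$. The inclusion $M\sub L_\Lambda$ is immediate. The remaining condition $\VV=L_{\can}+M^\perp=L_{\can}+\wt{L}_{\can}$ is, after taking perpendiculars, equivalent to $L_{\can}\cap M=0$; and this holds because any element of $\pi_*(\OO_C(np))$ representing a class in $M$ whose image in $\VV$ lies in $L_{\can}$ is holomorphic at $p$, hence globally regular, hence constant along the fibres and therefore zero in $\VV$. A rank count then gives $\dim(L_{\can}\cap\wt{L}_{\can})=n+(n+g)-2n=g$, and $\ga$ is injective on $L_{\can}\cap\wt{L}_{\can}$ because $L_{\can}\cap M=0$.

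The reduction lemma now yields a quasi-isomorphism $C^\bullet(\VV;L_\Lambda,L_{\can})\simeq C^\bullet(R^1\pi_*\C_{C/S};\ov{L}_1,\ov{L}_2)$ with $\ov{L}_1=L_\Lambda/M=\Lambda$ and $\ov{L}_2=\ga(L_{\can}\cap\wt{L}_{\can})$. The main content of the argument is the identification $\ga(L_{\can}\cap\wt{L}_{\can})=\pi_*\om_C$. For this I would use the description of $\ga$ from Proposition \ref{symp-H1-prop}: a section $\ov{f}$ of $\OO_C(np)/\C_{C/S}$ is sent to the de Rham class represented by the global meromorphic $1$-form $df$, with poles only at $p$, obtained by differentiating a local lift. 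The extra condition that $\ov{f}$ also lie in $L_{\can}$ means precisely that a local lift near $p$ is holomorphic at $p$; then $df$ is holomorphic both near $p$ and on $C\smallsetminus p$, so it is a global section of $\om_C$, whence $\ga(\ov{f})\in\pi_*\om_C$. Since $L_{\can}\cap\wt{L}_{\can}$ has rank $g$ and $\ga$ is injective on it, the resulting inclusion $\ga(L_{\can}\cap\wt{L}_{\can})\sub\pi_*\om_C$ is an equality by rank.

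Finally I would identify the target complex $C^\bullet(R^1\pi_*\C_{C/S};\Lambda,\pi_*\om_C)=[\Lambda\to R^1\pi_*\C_{C/S}/\pi_*\om_C]$. The long exact sequence attached to $0\to\C_{C/S}\to\OO_C\to\om_{C/S}\to 0$ for the usual family (the even analogue of \eqref{e-les}) identifies $R^1\pi_*\C_{C/S}/\pi_*\om_C$ with $R^1\pi_*\OO_C$, turning the differential into the composition $\Lambda\hra R^1\pi_*\C_{C/S}\to R^1\pi_*\OO_C$, as asserted. I expect the only genuine obstacle to be the identification $\ga(L_{\can}\cap\wt{L}_{\can})=\pi_*\om_C$, which is where the geometry of the connecting map $\ga$ enters; the transversality check and the final cohomological identification are routine bookkeeping.
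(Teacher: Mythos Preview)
Your proof is correct and follows essentially the same route as the paper: apply Lemma~\ref{reduction-lem} with $M=\pi_*(\OO_C(np))/\OO_S\sub L_\Lambda$, identify the reduced Lagrangians as $\Lambda$ and $\pi_*\om_{C/S}$, and finish via the exact sequence \eqref{R1pi*-ex-seq}. The only variation is in the verification of the reduction hypothesis---the paper checks surjectivity of $M^\perp\to\VV/L_{\can}$ by a short diagram chase showing that the map $R^1\pi_*\OO_C\to R^1\pi_*(\OO_C/\C_{C/S})$ vanishes, whereas you check the equivalent condition $L_{\can}\cap M=0$ directly---and in the identification of $L_{\can}\cap\wt{L}_{\can}$ with $\pi_*\om_{C/S}$, which the paper reads off immediately as $\pi_*(\OO_C/\C_{C/S})\simeq\pi_*\om_{C/S}$ rather than via your de~Rham description of $\ga$ plus a rank count.
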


\begin{proof} The first assertion follows from Proposition \ref{symp-H1-prop}(ii) and Lemma \ref{sympl-even-lem}.
To deduce the second, we use Lemma \ref{reduction-lem}.
We need to check that the natural map 
$$\a:M^\perp=\pi_*(\OO_C(np)/\C_{C/S})\to \VV/L_{\can}\simeq \pi_*(\OO_C(np)/\OO_C)$$
is surjective. This map fits into the following commutative diagram with exact rows:
\begin{diagram}
\pi_*(\OO_C(np))&\rTo{}&\pi_*(\OO_C(np)/\OO_C)&\rTo{}&R^1\pi_*(\OO_C)\\
\dTo{}&&\dTo{\id}&&\dTo{\ga}\\
\pi_*(\OO_C(np)/\C_{C/S})&\rTo{\a}&\pi_*(\OO_C(np)/\OO_C)&\rTo{\b}&R^1\pi_*(\OO_C/\C_{C/S})
\end{diagram}
Now we observe that the map $\ga$ fits into an exact sequence
$$R^1\pi_*(\C_{C/S})\to R^1\pi_*(\OO_C)\rTo{\ga} R^1\pi_*(\OO_C/\C_{C/S})$$
which implies that $\ga=0$ since the previous map in the exact sequence is surjective.
Hence, in the above commutative diagram we should have $\b=0$, and so $\a$ is surjective, as claimed.

Furthermore, we have 
$$L_{\can}\cap M^\perp=L_{\can}\cap \wt{L}_{\can}=\pi_*(\OO_C/\C_{C/S})\simeq \pi_*(\om_{C/S}).$$
Thus, from Lemma \ref{reduction-lem} we get a quasi-isomorphism
$$C^\bullet(\VV;L_\Lambda,L_{\can})\simeq C^\bullet(R^1\pi_*\C_{C/S};\Lambda,\pi_*(\om_{C/S})).$$
Finally, the exact sequence
\begin{equation}\label{R1pi*-ex-seq}
0\to \pi_*(\om_{C/S})\to R^1\pi_*\C_{C/S}\to R^1\pi_*\OO_C\to 0
\end{equation}
gives an isomorphism
$C^\bullet(R^1\pi_*\C_{C/S};\Lambda,\pi_*(\om_{C/S}))\to [\Lambda\to R^1\pi_*\OO_C]$.
\end{proof}

Note that the bundle $R^1\pi_*\C_{C/S}$ comes from a local system over $S$
equipped with a real structure.
If we choose $\Lambda$ coming from a real Lagrangian local subsystem then the map
$\Lambda\to R^1\pi_*\OO_C$ will be an isomorphism, hence in this case the complex
$C^\bullet(\VV;L_\Lambda,L_{\can})$ is acyclic.

\subsection{Symplectic structures in supercase}
\label{sympl-super-sec}

Assume now that $\pi:X\to S$ is a family of supercurves, and
we have an effective Cartier divisor $p\sub X$ of codimension $1|0$
with the support $\ov{p}$ (as observed in \cite{DRS}, the choice of $p$ is equivalent to a choice of section $S\to X$
supported on $\ov{p}$, see also \cite[Sec.\ 9.3]{Witten}). Then we can define sheaves $\OO_X(mp)$ for $m\in \Z$.
Now we would like to mimic the
definition of a symplectic bundle $\VV=\VV_n$ from Sec.\ \ref{even-symp-sec}, where $n>0$ is such that
$$R^1\pi_*(\OO_X(np))=\pi_*(\OO_X(-np))=0.$$
We can start by taking the vector bundle 
$$\wt{\VV}=\pi_*(\OO_X(np)/\OO_X(-Np))$$
for some $N\gg n$, and observe that the form
$$B(f,g)=\Res_p(f\de g)$$
descends to a skew-symmetric form on $\wt{\VV}$
(recall that $\de:\OO_X\to\om_{X/S}$ is the canonical derivation).
Now we denote by $\VV$ the quotient of $\wt{\VV}$ by the kernel of this form.
Locally over $S$ we can choose an isomorphism of a formal neighborhood of $p$ in $X$ with the split
formal superdisk $D$ over $S$ with the coordinate $(z,\th)$ (see Remark \ref{superdisk-remark}).
Then 
$$\wt{\VV}|_D=z^{-n}\OO_S[\![z]\!]/z^N\OO_S[\![z]\!] \oplus z^{-n}\OO_S[\![z]\!]\th/z^N\OO_S[\![z]\!]\th,$$
and the kernel of the form gets identified with 
$$z^{n+1}\OO_S[\![z]\!]/z^N\OO_S[\![z]\!]\oplus z^n\OO_S[\![z]\!]\th/z^N\OO_S[\![z]\!]\th,$$
so that
$$\VV|_D=\coker(\OO_S\to z^{-n}\OO_S[\![z]\!]/z^{n+1}\OO_S[\![z]\!] \oplus z^{-n}\OO_S[\![z]\!]\th/z^n\OO_S[\![z]\!]\th).$$
This shows that $\VV$ is a symplectic bundle of rank $2n|2n$.
 
Furthermore, we can still define the Lagrangian subbundle $L_{\can}\sub \VV$ by 
$$L_{\can}:=\pi_*(\OO_X/\OO_X(-(n+1)p))/\OO_S,$$ 
and the Lagrangian subbundle $L_\Lambda\sub \VV$
as the preimage of a Lagrangian subbundle $\Lambda\sub R^1\pi_*\C_{X/S}$
under the map 
$$\wt{L}_{\can}:=\pi_*(\OO_X(np)/\C_{X/S})\to R^1\pi_*\C_{X/S}.$$
We also have an isotropic subbundle
$$M:=\pi_*(\OO_X(np))/\OO_S\sub \wt{L}_{\can}.$$

Most of the arguments of Section \ref{even-symp-sec} go through in the supercase.
However, the assertion of Proposition \ref{Lagr-quasi-isom-prop} only holds over an open subset $U\sub S$
where $\OO_S\to \pi_*\OO_X$ is an isomorphism since we want to
use the exact sequence \eqref{R1pi*-ex-seq}. 

To describe the situation globally, let us consider the complex (concentrated in degrees $0$ and $1$),
$$\wt{\Lambda}=[\pi_*\OO_X(np)\to L_\Lambda].$$
It has locally free cohomology, so locally 
$$\wt{\Lambda}\simeq \OO_S\oplus \Lambda[-1].$$
Then we have a natural morphism in derived category
$$\wt{\Lambda}\to R\pi_*\OO_X$$
represented by the natural map of complexes
$$[\pi_*\OO_X(np)\to L_\Lambda]\to [\pi_*\OO_X(np)\to \pi_*(\OO_X(np)/\OO_X)].$$
Now we have the following replacement of Proposition \ref{Lagr-quasi-isom-prop}.

\begin{prop}\label{super-Lagr-quasi-isom-prop} For large enough $n$, and a 
Lagrangian subbundle $\Lambda\sub R^1\pi_*\C_{X/S}$, the subbundles
$L_\Lambda$ and $L_{\can}$ in $\VV$, defined above are Lagrangian.
Furthermore, we have $M^\perp=\wt{L}_{\can}$ and the natural map
$$M^\perp/M\to R^1\pi_*\C_{X/S}$$
is an isomorphism and identifies $L_\Lambda/M$ with $\Lambda\sub R^1\pi_*\C_{X/S}$.
Over an open subset $U\sub S$ where the theta-characteristic associated with $X_s$ has no global sections,
we have $L_{\can}+M^\perp=\VV$ and the reduced Lagrangian
$L_{\can}\cap M^\perp$ gets identified with $\pi_*\om_{X/S}\sub R^1\pi_*\C_{X/S}$.
There is an isomorphism in the derived category of $S$,
$$\Cone(\wt{\Lambda}\to R\pi_*\OO_X)\simeq C^\bullet(\VV;L_\Lambda,L_{\can}).$$ 
\end{prop}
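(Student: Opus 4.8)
The plan is to follow the proof of Proposition~\ref{Lagr-quasi-isom-prop}, using the super-analogues of the results of Section~\ref{even-symp-sec} for the pointwise statements, and to prove the global derived-category identification separately, since the reduction Lemma~\ref{reduction-lem} is unavailable away from $U$. First I would establish the Lagrangian properties and the reduction. That $\VV$ is symplectic of rank $2n|2n$ is the local computation preceding the statement; in the coordinates $(z,\th)$ the form $B$ is block-diagonal, pairing even tails by $\Res(f_0 g_0')$ and odd tails by $\Res(f_1 g_1)$, and $L_{\can}$ consists of the non-negative tails, hence is isotropic of rank $n|n$ and thus Lagrangian. For $M$ I would prove the super-version of Proposition~\ref{symp-H1-prop}: the map $\ga\colon\WW\to R^1\pi_*\C_{X/S}$ is compatible with the skew forms, the argument being that of Proposition~\ref{symp-H1-prop}(i) with the derivation $\de$ and the super-residue of Remark~\ref{superdisk-remark} replacing $d$ and $\Res$ (via Lemmas~\ref{Cech-gen-lem} and~\ref{H1-mult-lem}). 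Since $M=\ker(\ga|_{\wt{L}_{\can}})$ by the super-analogue of \eqref{symp-red-ti-L1-ex-seq}, this gives $\wt{L}_{\can}\sub M^\perp$; equality of ranks forces $M^\perp=\wt{L}_{\can}$, so $M$ is isotropic and $M^\perp/M\to R^1\pi_*\C_{X/S}$ is an isomorphism. As $M\sub L_\Lambda$ and $L_\Lambda/M=\Lambda$ is Lagrangian in $M^\perp/M$, the preimage $L_\Lambda$ is Lagrangian in $\VV$ and $L_\Lambda/M$ is identified with $\Lambda$.

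Second, over $U$ the underlying theta-characteristic satisfies $R\ov{\pi}_*L=0$, so Proposition~\ref{loc-free-prop} gives an isomorphism $\OO_S\to\pi_*\OO_X$ and the local freeness of $\pi_*\om_{X/S}$. The long exact sequence \eqref{e-les} then degenerates: the map $\pi_*\om_{X/S}\to R^1\pi_*\C_{X/S}$ becomes injective, since its kernel is the image of the map $\pi_*\OO_X\to\pi_*\om_{X/S}$ induced by $\de$, which vanishes because $\pi_*\OO_X=\OO_S$ consists of constants. Thus over $U$ we recover the short exact sequence $0\to\pi_*\om_{X/S}\to R^1\pi_*\C_{X/S}\to R^1\pi_*\OO_X\to 0$, the analogue of \eqref{R1pi*-ex-seq}, and the argument of Proposition~\ref{Lagr-quasi-isom-prop} applies verbatim: the commutative square used there shows $\a\colon M^\perp\to\VV/L_{\can}$ is surjective (because $R^1\pi_*\C_{X/S}\to R^1\pi_*\OO_X$ is onto), so $L_{\can}+M^\perp=\VV$, while $L_{\can}\cap M^\perp=\pi_*(\OO_X/\C_{X/S})$ maps isomorphically onto $\pi_*\om_{X/S}\sub R^1\pi_*\C_{X/S}$.

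Third, and this is the crucial point, away from $U$ one has $L_{\can}+M^\perp\neq\VV$, so Lemma~\ref{reduction-lem} cannot be used and I would compute the cone directly. The natural quotient $\wt\VV=\pi_*(\OO_X(np)/\OO_X(-Np))\to\pi_*(\OO_X(np)/\OO_X)$ annihilates both $\ker B$ (which lies in $\OO_X$ by the local model) and $L_{\can}$ (the image of $\pi_*(\OO_X/\OO_X(-(n+1)p))$, which comes from $\OO_X$), hence descends to a map $\VV/L_{\can}\to\pi_*(\OO_X(np)/\OO_X)$, an isomorphism by the rank count $n|n$ from the local model. Under this identification the projection $\wt{L}_{\can}\to\VV/L_{\can}$ becomes the reduction $c\colon\pi_*(\OO_X(np)/\C_{X/S})\to\pi_*(\OO_X(np)/\OO_X)$ modulo $\OO_X$, so $C^\bullet(\VV;L_\Lambda,L_{\can})=[L_\Lambda\to\VV/L_{\can}]$ is identified with $[L_\Lambda\xrightarrow{c}\pi_*(\OO_X(np)/\OO_X)]$, $c$ restricted along $L_\Lambda\hra\wt{L}_{\can}$. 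On the other hand $R^1\pi_*\OO_X(np)=R^1\pi_*(\OO_X(np)/\OO_X)=0$, so $R\pi_*\OO_X$ is represented by $[\pi_*\OO_X(np)\to\pi_*(\OO_X(np)/\OO_X)]$, and $\wt\Lambda\to R\pi_*\OO_X$ is the identity in degree $0$ and $c$ in degree $1$. Forming the cone and canceling the split injection $\pi_*\OO_X(np)$ in degree $0$ of the shift yields $\Cone(\wt\Lambda\to R\pi_*\OO_X)[-1]\simeq[L_\Lambda\xrightarrow{c}\pi_*(\OO_X(np)/\OO_X)]$, which is precisely $C^\bullet(\VV;L_\Lambda,L_{\can})$.

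I expect the main obstacle to be exactly the matching carried out in the third step: the cancellation in the cone is formal, but making the two two-term complexes coincide (rather than merely be quasi-isomorphic) requires verifying that, under $\VV/L_{\can}\cong\pi_*(\OO_X(np)/\OO_X)$, the connecting differential of $C^\bullet(\VV;L_\Lambda,L_{\can})$ equals the reduction map $c$. This forces one to track the definitions of $L_{\can}$ and of the reduction $\VV=\wt\VV/\ker B$ through the local superdisk model, where the even and odd truncations of $\VV$ differ (even tails up to $z^n$, odd tails up to $z^{n-1}$); it is the compatibility of these asymmetric truncations with the single lattice $\OO_X$ that ultimately makes the identification global and canonical.
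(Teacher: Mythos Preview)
Your proposal is correct and follows essentially the same route as the paper. The paper dismisses the first two paragraphs with ``Most of the assertions are checked as in the even case,'' and for the last assertion it does precisely what your third paragraph does: it identifies $\VV/L_{\can}\simeq\pi_*(\OO_X(np)/\OO_X)$, writes $C^\bullet(\VV;L_\Lambda,L_{\can})=[L_\Lambda\to\pi_*(\OO_X(np)/\OO_X)]$, and then observes that the shifted cone surjects onto this two-term complex with acyclic kernel $[\pi_*\OO_X(np)\xrightarrow{\id}\pi_*\OO_X(np)]$ (your ``canceling the split injection''). Your anticipated obstacle about matching the differentials through the asymmetric even/odd truncations is not an issue in practice: since $\ker B$ and the image of $\pi_*(\OO_X/\OO_X(-(n+1)p))$ both sit inside the image of $\pi_*(\OO_X/\OO_X(-Np))$, the induced map $\VV/L_{\can}\to\pi_*(\OO_X(np)/\OO_X)$ is a well-defined natural surjection of bundles of the same rank, hence an isomorphism, and under it the differential of $C^\bullet$ is literally the reduction map $c$.
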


\begin{proof}
Most of the assertions are checked as in the even case.
To prove the last assertion, we note that the complex $C^\bullet(\VV;L_\Lambda,L_{\can})$ has form 
$$[L_\Lambda\to \VV/L_{\can}]\simeq [L_\Lambda\to \pi_*(\OO_X(np)/\OO_X)].$$
Now we have a natural surjective morphism of complexes
$$\Cone(\wt{\Lambda}\to [\pi_*\OO_X(np)\to \pi_*\OO_X(np)])\to [L_\Lambda\to \pi_*(\OO_X(np)/\OO_X)]$$
with the kernel $[\pi_*\OO_X(np)\rTo{\id}\pi_*\OO_X(np)][1]$. This immediately implies our claim.
\end{proof}

\subsection{Application to the moduli space of supercurves and poles of the period matrix}

Let $\SS$ denote the moduli space of supercurves of 
fixed genus $g\ge 2$, and let $\SS_{\bos}$ denote the corresponding reduced space.
Let $\SS^+$ denote the connected component over the component $\SS_{\bos}^+$ corresponding to even spin structures.
There is a natural reduced divisor $\DD_0\sub \SS_{\bos}^+$, corresponding to spin structures with a nonzero section.

Below
we work over some affine superscheme $S$ equipped with an \'etale morphism $S\to \SS^+$. Abusing the notation we will still
denote by $\DD_0$ the preimage of $\DD_0\sub\SS_{\bos}^+$ in $S_{\bos}$.
We have a well defined line bundle $\Ber(R\pi_*\OO_X)$ of rank $1|0$, 
where $\pi:X\to S$ is the pullback of the universal supercurve. Let $U\sub S$ be the open complement to $\DD_0$.
Then over $U$, $R^1\pi_*\OO_X$ is a vector bundle of rank $g|0$ and $\pi_*\OO_X=\OO_S$ (see Proposition \ref{loc-free-prop}).
Thus, we have a canonical isomorphism
\begin{equation}\label{Ber-R1pi*-isom}
\Ber(R\pi_*\OO_X)|_U\simeq \Det(R^1\pi_*\OO_X)^{-1}|_U.
\end{equation}

Now assume that over $S$ we have a Lagrangian subbundle $\Lambda\sub R^1\pi_*\C_{X/S}$
such that the induced map $\Lambda|_U\to R^1\pi_*\OO_X|_U$
is an isomorphism. Then the pair of Lagrangian subbundles $\pi_*\om_{X/S}|_U$ and $\Lambda|_U$ is
transversal, so using the construction of Section \ref{det-sec}, we have the corresponding nonvanishing section
$$\th_\Lambda:=\th(\pi_*\om_{X/S},\Lambda)\in \Gamma(U,\Det(\Lambda)^{-1}\ot \Ber(R\pi_*\OO_X)^{-1}|_U),$$
where we used the isomorphism \eqref{Ber-R1pi*-isom}. Note that the line bundle on the right is defined not just over $U$,
so it makes sense to ask about the behavior of $\th_\Lambda$ near the divisor $\DD_0$.

Assume that $\La'\sub R^1\pi_*\C_{X/S}$ is a  Lagrangian complement to $\Lambda$ defined over $S$, so that we have a decomposition
$$R^1\pi_*\C_{X/S}=\La\oplus\La'.$$
Then over $U$ the subbundle $\pi_*(\om_{X/S})\sub R^1\pi_*\C_{X/S}$ is the graph of
a morphism 
$$\Om:\Lambda'\to\Lambda.$$ 

\begin{theorem}\label{square-root-thm} Assume that $\La$ is transversal to 
$$\pi_*\om_{C/S_{\bos}}\sub R^1\pi_*\C_{C/S_{\bos}}=R^1\pi_*\C_{X/S}.$$
Then locally on $S$ near any point of the divisor $\DD_0$ 
there exists a trivialization of $\Det(\Lambda)^{-1}\ot \Ber(R\pi_*\OO_X)^{-1}$,
such that $\th_\Lambda^{-1}=f^2$ for some regular function $f$ vanishing on $\DD_0$.
Furthermore, $f\Om$ is regular on $S$.
At a generic point of $\DD_0$, $f$ projects to a local equation of $\DD_0$ in $S_{\bos}$.
\end{theorem}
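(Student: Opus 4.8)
The plan is to reduce the statement to Theorem \ref{Lagr-int-Ber-thm}, applied to the enlarged symplectic bundle $\VV=\VV_n$ of Section \ref{sympl-super-sec} and to the pair of Lagrangian subbundles $(L_1,L_2)=(L_\Lambda,L_{\can})$. Since $M\sub L_\Lambda=\ga^{-1}(\Lambda)$, Lemma \ref{reduction-lem} applies with this $M$, and by Proposition \ref{super-Lagr-quasi-isom-prop} the reduction $M^\perp/M$ carries $(L_\Lambda,L_{\can})$ to the pair $(\Lambda,\pi_*\om_{X/S})$ in $R^1\pi_*\C_{X/S}$ over $U$. This yields a quasi-isomorphism $C^\bullet(\VV;L_\Lambda,L_{\can})\simeq C^\bullet(R^1\pi_*\C_{X/S};\Lambda,\pi_*\om_{X/S})$ compatible with the canonical sections $\th$ of Section \ref{det-sec}. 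As the sections attached to $(\Lambda,\pi_*\om_{X/S})$ and to $(\pi_*\om_{X/S},\Lambda)$ lie in the same line bundle $\Ber(\Lambda)^{-1}\ot\Ber(\pi_*\om_{X/S})^{-1}$, they agree up to sign; hence $\th^{-1}(L_\Lambda,L_{\can})=\pm\th_\Lambda^{-1}$.

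First I would pin down the line bundle. Using the isomorphism $C^\bullet(\VV;L_\Lambda,L_{\can})\simeq\Cone(\wt\Lambda\to R\pi_*\OO_X)[-1]$ of Proposition \ref{super-Lagr-quasi-isom-prop} together with $\wt\Lambda\simeq\OO_S\oplus\Lambda[-1]$, a Berezinian computation for the cone and the shift gives
\[
\Ber\bigl(C^\bullet(\VV;L_\Lambda,L_{\can})\bigr)^{-1}\simeq\Det(\Lambda)\ot\Ber(R\pi_*\OO_X),
\]
which is precisely the inverse of the line bundle $\Det(\Lambda)^{-1}\ot\Ber(R\pi_*\OO_X)^{-1}$ in which $\th_\Lambda$ was declared to live, via \eqref{Ber-R1pi*-isom}. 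Thus the square root $f$ furnished by Theorem \ref{Lagr-int-Ber-thm} is, in a local trivialization of $\Det(\Lambda)^{-1}\ot\Ber(R\pi_*\OO_X)^{-1}$, a regular function with $\th_\Lambda^{-1}=f^2$.

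To invoke Theorem \ref{Lagr-int-Ber-thm} I must verify its hypotheses. Transversality of $(L_\Lambda,L_{\can})$ over $U$ and injectivity of $\OO_S\to j_*\OO_U$ are clear: $U$ is the dense complement of $\DD_0$, and there $\Lambda|_U\to R^1\pi_*\OO_X|_U$ is an isomorphism, so $(\pi_*\om_{X/S}|_U,\Lambda|_U)$ is transversal. The even-part condition $L_{\Lambda,s}^+\cap L_{\can,s}^+=0$ at $s\in\DD_0$ is where the hypothesis on $\Lambda$ enters: on even parts the construction of Section \ref{even-symp-sec} identifies $C^\bullet(\VV^+;L_\Lambda^+,L_{\can}^+)$ with $[\Lambda\to R^1\pi_*\OO_C]$ (Proposition \ref{Lagr-quasi-isom-prop}), and the reduction identifies $L_{\Lambda,s}^+\cap L_{\can,s}^+$ with $\ker(\Lambda_s\to R^1\pi_*\OO_C|_s)=\Lambda_s\cap\pi_*\om_{C/S_0,s}$ by the exact sequence \eqref{R1pi*-ex-seq}; the assumed transversality of $\Lambda$ and $\pi_*\om_{C/S_0}$ makes this vanish. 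Theorem \ref{Lagr-int-Ber-thm} then gives $\th_\Lambda^{-1}=f^2$ with $f\in\II(C^\bullet(\VV;L_\Lambda,L_{\can}))$. Since $\II$ is invariant under duality and shift (Lemma \ref{complex-support-lem}(i)), it equals $\II([\pi_*\om_{X/S}\to R^1\pi_*\C_{X/S}/\Lambda])$, and by Lemma \ref{complex-support-lem}(iv) membership of $f$ means $f\de^{-1}$ extends regularly, where $\de\colon\pi_*\om_{X/S}\to R^1\pi_*\C_{X/S}/\Lambda$ is the differential. Over $U$ the bundle $\pi_*\om_{X/S}$ is the graph of $\Om\colon\Lambda'\to\Lambda$, so after identifying $R^1\pi_*\C_{X/S}/\Lambda\simeq\Lambda'$ the map $\de^{-1}$ followed by the projection to $\Lambda$ along $\Lambda'$ is exactly $\Om$; hence $f\Om$ is regular on $S$.

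Finally I would analyse $f$ itself. By the proof of Theorem \ref{Lagr-int-Ber-thm} one has $f=\Pfaff(\phi)$ for the odd skew-symmetric self-dual representative $\phi$ of the reduced complex provided by Proposition \ref{selfdual-rep-prop}, so $f$ is invertible exactly where the complex is exact, i.e.\ on $U$; thus $f$ vanishes along $\DD_0$. For the last assertion I would restrict to the reduced base $S_0$, where the split structure $\OO_X=\OO_C\oplus L$ decomposes $R\pi_*\OO_X|_{S_0}$ into an even classical summand and the odd summand $R\pi_*L$, and the odd part of $\phi|_{S_0}$ becomes the theta-characteristic pairing of Section \ref{theta-char-sec}. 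Its Pfaffian is, by the first-order computation of Nagaraj \cite{Nagaraj1990} recalled there, a local equation of $\DD_0$ at a generic point, where $h^0(L)=2$. The main obstacle is the bookkeeping of the first two paragraphs: tracking the Berezinian line bundle and the section $\th_\Lambda$ through the cone, the degree shift, and the symplectic reduction, so that the abstract Pfaffian of Theorem \ref{Lagr-int-Ber-thm} is matched precisely (including the sign and the correct power) with $\th_\Lambda^{-1}$, and then with the classical theta divisor; once these identifications are in place, the result follows from the earlier theorems.
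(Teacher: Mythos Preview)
Your overall strategy is the same as the paper's: reduce to Theorem \ref{Lagr-int-Ber-thm} for the pair $(L_\Lambda,L_{\can})$ in the big symplectic bundle $\VV$, identify $\th(L_\Lambda,L_{\can})$ with $\th_\Lambda$ via the cone description of Proposition \ref{super-Lagr-quasi-isom-prop}, and verify the even-part hypothesis using transversality of $\Lambda$ and $\pi_*\om_{C/S_0}$. This part, as well as your treatment of $f$ vanishing on $\DD_0$ and reducing to the theta-characteristic Pfaffian on $S_0$, is fine and matches the paper.

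There is, however, a genuine gap in your argument that $f\Om$ is regular. You pass from $\II(C^\bullet(\VV;L_\Lambda,L_{\can}))$ to $\II([\pi_*\om_{X/S}\to R^1\pi_*\C_{X/S}/\Lambda])$ and then invoke Lemma \ref{complex-support-lem}(iv). But $\pi_*\om_{X/S}$ is \emph{not} locally free near $\DD_0$ (this is exactly Theorem \ref{non-loc-free-thm}), so the reduced complex is not a complex of vector bundles on $S$; the reduction of Lemma \ref{reduction-lem} (which requires $\VV=L_{\can}+M^\perp$) is only available over $U$, as Proposition \ref{super-Lagr-quasi-isom-prop} explicitly states. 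Hence the equality of ideals you claim holds only over $U$, where both ideals are $\OO_U$ and the statement is vacuous. Lemma \ref{complex-support-lem}(iv) cannot be applied to your reduced complex on $S$.

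The paper circumvents this by staying in the big bundle. One chooses a splitting $\Lambda\to L_\Lambda$ and represents $C^\bullet(\VV;L_\Lambda,L_{\can})\simeq\Cone(\wt\Lambda\to R\pi_*\OO_X)[-1]$ by the honest two-term complex of bundles
\[
\Lambda\oplus \pi_*(\OO_X(np))/\OO_S\ \rTo{\de}\ \pi_*(\OO_X(np)/\OO_X)
\]
on all of $S$; now $f\in\II$ of \emph{this} complex (by Lemma \ref{complex-support-lem}(i) applied to a quasi-isomorphism over $S$), so $f\de^{-1}$ is regular on $S$. One then shows, using the resolution $[\pi_*(\OO_X(np))/\OO_S\to L'_1]\rTo{\sim}\Lambda'$ with $L'_1=\ga^{-1}(\Lambda')\subset\wt{L}_{\can}$, that over $U$ the map $\Om$ is the composition
\[
L'_1\to \pi_*(\OO_X(np)/\OO_X)\rTo{\de^{-1}}\Lambda\oplus \pi_*(\OO_X(np))/\OO_S\rTo{p_\Lambda}\Lambda,
\]
vanishing on $\pi_*(\OO_X(np))/\OO_S$. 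Since the outer maps are regular on $S$ and $f\de^{-1}$ is regular on $S$, $f\Om$ extends. The point you are missing is precisely this: one must express $\Om$ through $\de^{-1}$ for a differential $\de$ between bundles that exist on all of $S$, not through the inverse of a map out of $\pi_*\om_{X/S}$.
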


\begin{proof} We will deduce this from Theorem \ref{Lagr-int-Ber-thm}. 
Over $U$ the complex $\Cone(\wt{\Lambda}\to R\pi_*\OO_X)$ is exact, and $\th_\Lambda$ can be identified
with the canonical nonvanishing section
$$\wt{\th}\in\Ber\Cone(\wt{\Lambda}\to R\pi_*\OO_X)^{-1}|_U\simeq \Det(\Lambda)^{-1}\ot \Ber(R\pi_*\OO_X)^{-1}|_U.$$
Now by Proposition \ref{super-Lagr-quasi-isom-prop}, we have a quasi-isomorphism of
$\Cone(\wt{\Lambda}\to R\pi_*\OO_X)$ with the complex
$C^\bullet(\VV;L_\Lambda,L_{\can})$, so that $\wt{\th}$
gets identified with $\th(L_\Lambda,L_{can})$. We claim that
the assumptions of Theorem \ref{Lagr-int-Ber-thm} are satisfied for the pair of Lagrangians
$(L_\Lambda,L_{\can})$. This will immediately imply the first assertion.
It is easy to see that the fibers of our Lagrangians in $\VV$
can be identified with the similarly defined subspaces of the symplectic vector space
$$V:=H^0(\OO_C(np)/\OO_C(-(n+1)p))/\C \oplus H^0(L(np)/L(-np)),$$
where $(C,\OO_C\oplus L)$ is the split supercurve corresponding to $s$.
Namely, the fiber of $L_\Lambda$ is the preimage of $\Lambda_s\sub H^1(C,\C)$ 
under the map 
$$H^0(\OO_C(np)/\C_C)\oplus H^0(L(np))\to H^1(C,\C),$$
so its even part is the preimage of $\Lambda_s$ in $H^0(\OO_C(np)/\C_C)\sub V^+$.
On the other hand, the fiber of $L_{\can}$ is the subspace
$$H^0(\OO_C/\OO_{C(-(n+1)p)}/\C)\oplus H^0(L/L(-np))\sub V,$$
so its even part is $H^0(\OO_C/\OO_{C(-(n+1)p)}/\C)\sub V^+$. 
Thus, the intersection of even parts of the Lagrangians is the intersection of the preimage of $\Lambda_s$
with $H^0(\OO_C/\C_S)$. But the latter space projects to $H^0(\omega_C)\sub H^1(C,\C)$ which intersects
trivially with $\Lambda_s$. 

Let us choose a splitting $\La\to L_\Lambda$ of the projection $L_\Lambda\to \La$. Then we get a composed morphism
$$\La\to L_\Lambda\to \pi_*(\OO_X(np)/\OO_X)$$
and an induced quasi-isomorphism of complexes
$$[\La\oplus \pi_*(\OO_X(np))/\OO_S\rTo{\de} \pi_*(\OO_X(np)/\OO_X)]\simeq \Cone(\wt{\La}\to R\pi_*\OO_X).$$
Hence, combining Proposition  \ref{super-Lagr-quasi-isom-prop} 
with Theorem \ref{Lagr-int-Ber-thm} we obtain that $f\de^{-1}$ is regular on $S$.

Now let $L'_1\sub \wt{L}_{\can}$ be the preimage of $\La'$, so that we have a resolution
$$[\pi_*(\OO_X(np))/\OO_S\to L'_1]\rTo{\sim} \La'.$$
To show that $f\Om$ is regular, it remains to check that $\Om$ is induced by the composed map
$$L'_1\to \pi_*(\OO_X(np)/\C_{X/S})\to \pi_*(\OO_X(np)/\OO_X)\rTo{\de^{-1}} \La\oplus \pi_*(\OO_X(np))/\OO_S\rTo{p_\La} \La$$
which vanishes on $\pi_*(\OO_X(np))/\OO_S\sub L'_1$ (where $p_\La$ is the projection onto $\La$).

To this end, we note that $-\Om$ is the composition
of the maps of bundles over $U$, 
$$\La'\to R^1\pi_*\C_{X/S}\to R^1\pi_*\OO\to\La,$$
where the last arrow is the inverse of the isomorphism $\La\to R^1\pi_*\OO$ over $U$.
Now we observe that the maps
$$\La'\to R^1\pi_*\C_{X/S}\to R^1\pi_*\OO$$
are represented by the maps of resolutions (given by columns) in the following diagram
\begin{diagram}
\pi_*(\OO_X(np))/\OO_S&\rTo{\id}&\pi_*(\OO_X(np))/\OO_S&\rTo{\id}&\pi_*(\OO_X(np))/\OO_S\\
\dTo{}&&\dTo{}&&\dTo{}\\
L'_1&\rTo{}&\pi_*(\OO_X(np)/\C_{X/S})&\rTo{}&\pi_*(\OO_X(np)/\OO_X)
\end{diagram}
Furthermore, over $U$, the inverse to the isomorphism $\La\to R^1\pi_*\OO$ is precisely the
composition of $\de^{-1}$ with the projection to $\La$. Together with the previous observation this proves our claim.

The fact that $f$ projects to a local equation of $\DD_0$ at a generic point follows from the comparison with the construction of
Sec.\ \ref{theta-char-sec}.
\end{proof}

\section{Supermeasure on the moduli space}\label{s-5}

\subsection{Complex conjugation and the pairing}\label{complex-conj-sec}

For a complex manifold $\XX$ we denote by $\ov{\XX}$ conjugate complex manifold.
In other words, $\ov{\XX}=\XX$ as a $C^\infty$-manifold, but the holomorphic functions on $\ov{\XX}$ are antiholomorphic
functions on $\XX$. For every local holomorphic function $f$ on $\XX$ we can view $\ov{f}$ as a holomorphic function on
$\ov{\XX}$. It follows that for every holomorphic vector bundle $\VV$ over $\XX$ we have a holomorphic vector bundle
$\ov{\VV}$ over $\ov{\XX}$ (defined by the complex conjugate transition matrices), so that we have 
a $\C$-antilinear isomorphism
$$H^0(\XX,\VV)\rTo{\sim} H^0(\ov{\XX},\ov{\VV}): s\mapsto \ov{s}.$$
More generally, the correspondence $\XX\mapsto \ov{\XX}$ extends to a functor on the category of complex manifolds,
and for a morphism $f:\XX\to \YY$, we have
$$\ov{f_*\VV}\simeq \ov{f}_*(\ov{\VV}).$$

Given a holomorphic $n$-form $\eta$ on $\XX$, we can view $\ov{\eta}$ as a holomorphic $n$-form on $\ov{\XX}$.
Hence, $p_1^*\eta\we p_2^*\ov{\eta}$ is a holomorphic $2n$-form on $\XX\times\ov{\XX}$,
where $p_1:\XX\times\ov{\XX}\to \XX$ and $p_2:\XX\times\ov{\XX}\to \ov{\XX}$ are the projections.
Now let us consider the diagonal map $\De:\XX\to \XX\times \ov{\XX}$ (which is neither holomorphic nor antiholomorphic).
Then we have
$$\eta\we \ov{\eta}=\De^*(p_1^*\eta\we p_2^*\ov{\eta}).$$

For complex supermanifolds, we cannot define a pointwise conjugation of functions, so we define a complex conjugate formally as follows.

\begin{definition} For a complex supermanifold $\XX=(|\XX|,\OO_\XX)$ of dimension $n|m$ we define the {\it conjugate} complex supermanifold $\ov{\XX}$ of dimension $n|m$ as
$(|\XX|,\ov{\OO_\XX})$, where $\ov{\OO_\XX}$ is the same sheaf $\OO_\XX$ with the complex conjugate $\C$-algebra structure (obtained by composing the standard embedding
$\C\to \OO_\XX$ with the complex conjugation on $\C$).
\end{definition}

In local coordinates $(x_1,\ldots,x_n,$ $\th_1,\ldots,\th_m)$ holomorphic functions on $\ov{\XX}$ form an exterior
algebra in $\th_i$ over the ring of antiholomorphic functions in $x_1,\ldots,x_n$. This easily implies that $\ov{\XX}$ is still a complex supermanifold of dimension $n|m$.

If $\eta$ is a holomorphic section of the holomorphic Berezinian on $\XX$
then $p_1^*\eta \we p_2^*\ov{\eta}$ will be a holomorphic section of the holomorphic Berezinian on $\XX\times\ov{\XX}$. 
(Note that if we locally view $\XX\times\ov{\XX}$ as a real analytic supermanifold then $p_1^*\eta \we p_2^*\ov{\eta}$ 
will be an integral form of codimension $n|0$, where the complex dimension of $\XX$ is $n|m$.)

Now let $V$ be a local system of $\R$-vector spaces of finite rank over $\XX$ with a bilinear pairing $b:V\ot V\to \R$
(where we denote by $\R$ the corresponding constant sheaf).
Assume that
$$\iota:\VV\to V\ot_{\R}\OO_{\XX}$$ 
is a morphism of holomorphic vector bundles on $\XX$.
Then it induces a morphism $\ov{\iota}:\ov{\VV}\hra V\ot_{\R}\OO_{\ov{\XX}}$ of holomorphic vector bundles on $\ov{\XX}$.
Hence, on $\XX\times\ov{\XX}$ we get morphisms
$$p_1^*\iota:p_1^*\VV\hra p_1^{-1}V\ot_{\R}\OO_{\XX\times\ov{\XX}}, \ \ 
p_2^*\ov{\iota}:p_2^*\ov{\VV}\hra p_2^{-1}V\ot_{\R}\OO_{\XX\times\ov{\XX}}.$$

Now we observe that the restrictions of the local systems $p_1^{-1}V$ and $p_2^{-1}V$ to the diagonal in $\XX\times\ov{\XX}$
are naturally isomorphic: both restrictions are identified with $V$.
Therefore, there exists a unique isomorphism
$$p_1^{-1}V\simeq p_2^{-1}V$$
defined in a neighborhood of the diagonal in $\XX\times\ov{\XX}$. Thus, on this neighborhood 
we get an $\OO$-bilinear pairing
$$p_1^*\VV\ot p_2^*\ov{\VV}\to p_1^{-1}V\ot_{\R} p_1^{-1}V\ot_{\R} \OO_{\XX\times\ov{\XX}}\rTo{b\ot\id}\OO_{\XX\times\ov{\XX}}.$$

\subsection{Definition of the supermeasure and regularity theorem}

We can apply the above construction to the embedding $\pi_*\om_{X/\UU}\to R^1\pi_*\C_{X/\UU}$ 
(recall that $\UU\sub \SS^+$ is the locus corresponding to theta-characteristics with trivial $H^0$)
and the identification
\begin{equation}\label{H1-R-trivialization-eq}
R^1\pi_*\C_{X/\SS^+}=R^1\pi_*(\R_X)\ot_{\R}\OO_{\SS^+}.
\end{equation}

A slight modification we can make in this case is that instead of the diagonal in $\SS^+\times\ov{\SS^+}$
it is more natural to consider the locus in the reduced space $\SS_{\bos}^+\times\ov{\SS_{\bos}^+}$ corresponding
to the pairs $((C,L),(C',L'))$ with $C=C'$ (but possibly $L\neq L'$). 
Let us call this locus the {\it quasi-diagonal} in $\SS_{\bos}^+\times\ov{\SS_{\bos}^+}$ and denote it by
$$\De^{qu}(\SS_{\bos}^+)\sub \SS_{\bos}^+\times\ov{\SS_{\bos}^+}.$$
Since the local sistem $R^1\pi_*\R_X$ comes from the usual moduli space of curves, we can apply
the construction of Section \ref{complex-conj-sec} in our case, working on a neighborhood of the quasi-diagonal 
(instead of the diagonal). Thus, we get a pairing between vector bundles,
\begin{equation}\label{pairing-omega-eq}
p_1^*\pi_*\om_{X/\UU}\ot p_2^*\ov{\pi_*\om_{X/\UU}}\to \OO_{\UU\times\ov{\UU}}
\end{equation}
defined near the quasi-diagonal $\De^{qu}(\UU_0)$ in $\UU_0\times\ov{\UU}_0$.

We claim that this pairing is nondegenerate in a neighborhood of the quasi-diagonal.
Indeed, it is enough to check this on points of the quasi-diagonal, where it follows from the nonvanishing
of $\int_C \eta\we\ov{\eta}$ for a nonzero holomorphic $1$-form on a usual complex curve.

From the pairing \eqref{pairing-omega-eq}
we get a morphism of holomorphic bundles
$$p_1^*\pi_*\om_{X/\UU}\to p_2^*\ov{\pi_*\om_{X/\UU}}^\vee,$$
or passing to determinants,
$$p_1^*\Ber_1|_{\UU}\to p_2^*\ov{\Ber_1}^{-1}|_{\UU},$$
where $\Ber_j:=\Ber(R\pi_*\om_{X/\SS}^{\ot j})$
(recall that $R^1\pi_*\om_{X/\UU}\simeq \OO_\UU$, so
$\Ber_1|_{\UU}\simeq \Det(\pi_*\om_{X/\UU})$).
Furthermore, this is an isomorphism in a neighborhood of $\De^{qu}(\UU_0)$.

In other words, considering the inverse of the above isomorphism, we get a canonical holomorphic nonvanishing section 
$$s\in p_1^*\Ber_1\ot p_2^*\ov{\Ber_1}$$
in a neighborhood of $\De^{qu}(\UU_0)$ in $\UU\times \ov{\UU}$.
Note that by definition, $s$ is obtained by applying the construction of Section \ref{det-sec} to the pair of Lagrangians
$(p_1^*\pi_*\om_{X/\UU},p_2^*\ov{\pi_*\om_{X/\UU}})$:
$$s=\th(p_1^*\pi_*\om_{X/\UU},p_2^*\ov{\pi_*\om_{X/\UU}})^{-1}.$$

The Mumford isomorphism in the supercase (see \cite{Voronov})
is an isomorphism from $\Ber_1^5$ to $\Ber_3$, which is the holomorphic Berezinian on $\SS$ (i.e., the Berezinian of the cotangent sheaf).
Using it,
from $s^5$ we get a canonical holomorphic nonvanishing section of the holomorphic Berezinian
on a neighborhood of $\De^{qu}(\UU_0)$ in $\UU\times \ov{\UU}$.
Below we will show that it extends to a meromorphic section on a neighborhood of $\De^{qu}(\SS_{\bos}^+)$
in $\SS^+\times\ov{\SS^+}$.
Moreover, we have the following regularity statement.

\begin{theorem}\label{s5-thm} 
The image of the section $s^5$ under the Mumford isomorphism
extends to a meromorphic section of the holomorphic Berezinian on a neighborhood of the quasi-diagonal in 
$\SS^+\times\ov{\SS^+}$.
If $g\le 11$ then the extended section is regular on a neighborhood of the quasi-diagonal.
\end{theorem}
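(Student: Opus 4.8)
The plan is to reduce the regularity question for the holomorphic Berezinian section to the local behavior of the section $\th_\Lambda$ studied in Theorem \ref{square-root-thm}, and then to perform a careful power-counting against the order of vanishing of the Pfaffian $f$ along the divisor $\DD_0$. The central object is the nonvanishing section $s=\th(p_1^*\pi_*\om_{X/\UU},p_2^*\ov{\pi_*\om_{X/\UU}})^{-1}$, whose fifth power maps under the Mumford isomorphism to the holomorphic Berezinian. First I would observe that the pairing \eqref{pairing-omega-eq} is built from the embedding $\pi_*\om_{X/\UU}\hra R^1\pi_*\C_{X/\UU}$, which near the quasi-diagonal extends to the map $\Om\colon\La'\to\La$ governing the period matrix. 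By Theorem \ref{square-root-thm}, locally near a point of $\DD_0$ there is a trivialization in which $\th_\Lambda^{-1}=f^2$ with $f$ a Pfaffian vanishing on $\DD_0$, and $f\Om$ is regular. The key point is that $s$ itself should be expressible in terms of $\th_\Lambda$ for both factors: on $\UU\times\ov\UU$ near $\De^{qu}$, $s$ is the Berezinian of the composite isomorphism assembled from $\th(\pi_*\om,\La)$ on the holomorphic side, $\th(\ov{\pi_*\om},\ov\La)$ on the antiholomorphic side, and the nondegenerate real pairing $b$ on $\La$ identified across the two factors.

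Next I would use Lemma \ref{triple-Lag-lemma} to decompose $s$ precisely. Choosing $\La$ real and transversal to $\pi_*\om_{C/S_0}$ as in Theorem \ref{square-root-thm}, the section $\th(L_1,L_2)$ factors as $\th(L_1,\La)\,\th(L_2,\La)\,\ber(\tau_1-\tau_2)$, where here $L_1=p_1^*\pi_*\om_{X/\UU}$, $L_2=p_2^*\ov{\pi_*\om_{X/\UU}}$, and $\tau_1=\Om$, $\tau_2=\ov\Om$ are the two period matrices. Thus, up to nonvanishing holomorphic factors,
$$
s^{-1}=\th(L_1,L_2)\sim f(p_1)\cdot \ov{f}(p_2)\cdot \ber(\Om-\ov\Om),
$$
where $f(p_1)$ and $\ov f(p_2)$ come from the two square-root trivializations and $\ber(\Om-\ov\Om)$ is the Berezinian of the difference of period matrices. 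Because $f\Om$ is regular, the product $f(p_1)\ov f(p_2)\ber(\Om-\ov\Om)$ has a controlled order of vanishing: writing $\Om$ with a simple pole along $\DD_0$ of the shape governed by the Pfaffian, I would compute the order of vanishing/pole of $\ber(\Om-\ov\Om)$ in terms of the rank of the polar part, which is tied to $h^0(L)$ and hence to the even/odd parity results of Theorem \ref{Lagr-int-Ber-thm}.

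Then I would carry out the power count for $s^5$. Each factor of $s$ contributes $f^{-2}$ (from $\th_\Lambda^{-1}=f^2$, inverted), so $s^5$ contributes $f^{-10}$ in holomorphic type and $\ov f^{-10}$ in the conjugate type, against which the Berezinian $\ber(\Om-\ov\Om)^{-5}$ and its regularizing partner $\ber(f\Om-\ov f\,\ov\Om)$ supply compensating powers. The genus bound $g\le 11$ will enter exactly here: the net pole order along $\DD_0$ is a fixed linear expression in the local intersection data, and regularity holds precisely when a quantity bounded above by $g-\text{const}$ stays nonnegative. Concretely, the worst case occurs at the generic point of $\DD_0$, where $h^0(L)=2$ and $\DD_0$ is smooth by the computation of \cite{Nagaraj1990} cited in Section \ref{theta-char-sec}; there $f$ is a local coordinate and the count is transparent, while the constraint $g\le 11$ governs whether higher-order degenerations (larger $h^0(L)$, or non-reduced structure) can produce an uncancelled pole. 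The hard part will be the bookkeeping away from the generic point: one must show that along the deeper strata where $h^0(L)\ge 3$, the excess vanishing of $f$ still dominates the pole contributed by $\ber(\Om-\ov\Om)^{-5}$, which requires a uniform estimate on the Pfaffian's vanishing order versus the pole order of the period matrix, and it is exactly this estimate that degrades and first fails at $g=12$.
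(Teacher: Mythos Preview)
Your decomposition via Lemma \ref{triple-Lag-lemma} is the right starting point, and you correctly land on the formula
\[
s \;=\; p_1^*\th_W^{-1}\cdot p_2^*\ov{\th}_W^{-1}\cdot\det(p_1^*\tau-p_2^*\ov{\tau})^{-1}
\]
(this is formula \eqref{s-main-formula-eq} in the paper). However, your explanation of \emph{why} the bound $g\le 11$ appears is wrong, and the mechanism you propose would not yield a proof.

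You treat $\Om$ as a genuine meromorphic period matrix and try to balance the vanishing order of $f$ against the pole order of $\ber(\Om-\ov\Om)^{-5}$, stratifying by $h^0(L)$. But the paper's argument does not proceed by pole-counting along strata of $\DD_0$ at all. The crucial observation you are missing is this: if one takes the Lagrangian splitting $R^1\pi_*\C=\La\oplus\La'$ with $\La'=\pi_{0*}\om_{C/\MM^+_0}$ (the classical Hodge bundle pulled back from the reduced moduli space), then $\Om|_{\UU_0}=0$, so the entries of $\Om$ lie in $\NN^2$, the square of the nilradical of $\OO_{\MM^+}$. The paper then writes $\tau=A+\wt{\Om}$ where $A$ is a regular morphism (coming from comparing the real splitting $(W,W')$ to the Hodge splitting), $p_1^*A-p_2^*\ov{A}$ is invertible near the quasi-diagonal, and $\wt{\Om}$ still has entries in $\NN^2$ with $f\wt{\Om}$ regular. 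One then expands $\det(p_1^*\tau-p_2^*\ov{\tau})^{-1}$ as a geometric series in the nilpotent perturbation; every term lies in the subalgebra $\AA(f)$ generated over $\OO_{\MM^+\times\ov{\MM^+}}$ by elements $p_1^*(x/f)$ and $p_2^*(\ov{x}/\ov{f})$ with $x\in\NN^2$.

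The genus bound then enters through nilpotency, not through the geometry of $\DD_0$: since $\MM^+$ has $2g-2$ odd coordinates, one has $\NN^{2g-1}=0$. Writing $s^5=p_1^*f^{10}\cdot p_2^*\ov{f}^{10}\cdot a^5$ with $a\in\AA(f)$, any monomial in $a^5$ with more than ten factors of $1/f$ (on either side) carries a numerator in $\NN^{22}$, which vanishes precisely when $22>2g-2$, i.e.\ $g\le 11$. This is purely a count of odd moduli; nothing about the depth of the stratum $h^0(L)\ge 3$ is used, and no separate analysis away from the generic point of $\DD_0$ is needed. Your proposed estimate comparing the Pfaffian's vanishing order to the pole order along deeper strata is neither what the paper does nor, as far as I can see, something that would close on its own.
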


\subsection{Setup for studying the poles}

To study poles of $s$ near the point corresponding to a supercurve with the underlying curve $C$, we choose a Lagrangian 
splitting of $H^1(C,\R)$. This defines a local Lagrangian splitting of the local system $R^1\pi_*(\R_X)$, and hence
a Lagrangian splitting
\begin{equation}\label{lagrangian-splitting-eq}
R^1\pi_*\C_{X/\SS^+}=W\oplus W'.
\end{equation}
In this case we have the conjugate subbundles
$\ov{W}$, $\ov{W'}$ in $R^1\pi_*\C_{\ov{X}/\ov{\SS^+}}\simeq H^1(C,\R)\ot_{\R}\OO_{\ov{\SS^+}}$.
As before, we work in a neighborhood of the quasi-diagonal in $\SS^+\times\ov{\SS^+}$, where we have the identification
$$p_1^{-1}R^1\pi_*(\R)\simeq p_2^{-1}R^1\pi_*(\R).$$
With respect to this identification, we have
$$p_1^*W=p_2^*\ov{W}, \ \ p_1^*W'=p_2^*\ov{W'}$$
as subbundles in $H^1(C,\R)\ot_{\R}\OO_{\SS^+\times\ov{\SS^+}}$. 

Recall that the choice of $W$ leads to a nonvanishing
section over the complement to the divisor $\DD_0\sub \SS_{\bos}^+$,
$$\th^{-1}_W=\th(\pi_*\om_{X/\UU},W)^{-1}\in \Det(W)\ot \Det(R^1\pi_*\OO)^{-1}\simeq\Det(W)\ot \Ber_1.$$
Applying Lemma \ref{triple-Lag-lemma} to the triple of Lagrangians 
$$(p_1^*\pi_*\om_{X/\UU}, p_2^*\ov{\pi_*\om_{X/\UU}}, p_1^*W=p_2^*\ov{W})$$
we get
\begin{equation}\label{s-main-formula-eq}
\begin{array}{l}
\pm s=\th(p_1^*\pi_*\om_{X/\UU},p_1^*W)^{-1}\th(p_2^*\ov{\pi_*\om_{X/\UU}},p_2^*\ov{W})^{-1}\det(p_1^*\tau-p_2^*\ov{\tau})^{-1}
\\
=p_1^*\th^{-1}_W\cdot p_2^*\ov{\th}^{-1}_W\cdot \det(p_1^*\tau-p_2^*\ov{\tau})^{-1},
\end{array}
\end{equation}
where 
$$\tau:W'\to W\simeq (W')^{\vee}$$ 
is a symmetric morphism such that
$\pi_*\om_{X/\UU}$ is the graph of $\tau$
(so $\tau$ is the period matrix with respect to the above splitting). Formula \eqref{s-main-formula-eq} will allow
us to use information on the behavior of $\th_W$ near the divisor $\DD_0$, to deduce the regularity of $s$ for small genus.


\subsection{Proof of Theorem \ref{s5-thm}}

It is enough to study the situation in a neighborhood of the supercurve corresponding
to a curve with an even theta-characteristic $(C_0,L_0)$ such that $H^0(C_0,L_0)\neq 0$.
In addition to the splitting \eqref{lagrangian-splitting-eq}
into real Lagrangians, we can also define locally a different splitting.
Namely, let $\SS_{\bos}^+$ be the usual moduli stack underlying $\SS^+$ (classifying curves with even theta-characteristics).
Since $R^1\pi_*\C_{X/\SS^+}$ comes from a local system, it is identified with
the pull-back of a similar bundle $R^1\pi_{0*}\C_{C/\SS_{\bos}^+}$ on $\SS_{\bos}^+$, 
where $\pi_0:C\to\SS_{\bos}^+$ is the universal curve over $\SS_{\bos}^+$, 
with respect to some local splitting of the supermanifold $\SS^+$. 

Now, locally over $\SS_{\bos}^+$ we can choose a Lagrangian splitting of the form
$$R^1\pi_{0*}\C_{C/\SS_{\bos}^+}=\La'\oplus \La,$$
with $\La'=\pi_{0*}\om_{C/\SS_{\bos}^+}$.
Let us denote still by $\La$ and $\La'$ the induced Lagrangian subbundles of rank $g|0$ in $R^1\pi_*\C_{X/\SS^+}$.
Furthermore, a convenient choice of the complement $\La$ is
$$\La=W.$$

Over the open subset $\UU\sub\SS^+$, $\pi_*\om_{X/\UU}$ is a Lagrangian subbundle of $R^1\pi_*\C_{X/\UU}$,
whose restriction to $\UU_0=\UU\cap\SS_{\bos}^+$ is $\La'$. Hence, there is a unique symmetric even morphism
$$\Om:\La'\to \La$$
over $\UU$, whose graph is $\pi_*\om_{X/\UU}$. Note that $\Om|_{\UU_0}=0$, so the entries of $\Om$ belong
to $\NN^2$ (where $\NN$ is the nilradical) on $\UU$. 

By Theorem \ref{square-root-thm}, locally 
there exists a function $f$ on $\SS^+$ 
such that $f\Omega$ extends regularly to $\SS^+$ and $f^2$ differs from $\th_{\La}^{-1}=\th_W^{-1}$ by a unit.

Now let us compare our two splittings of $R^1\pi_*\C_{X/\SS^+}$.
There exist a regular even morphism $A:W'\to W$  
over $\SS^+$ (locally) such that
$\La'$ is the graph of $A$. Let us consider the composition
$$\wt{\Om}:W'\rTo{x\mapsto Ax+x}\La'\rTo{\Om} \La=W$$
Note that the entries of $\wt{\Om}$ still belong to $\NN^2$ and $f\wt{\Omega}$ is regular on $\SS^+$.
Now any element of $\pi_*\om_{X/\UU}$ has
form $y+\Om y$ for some $y\in \La'$, so using the decomposition
$y=Ax+x$, where $x\in W'$, we get
$$y+\Om y=x+Ax+\wt{\Om}(x)=x+(A+\wt{\Om})x.$$
This implies that $\pi_*\om_{X/\UU}$ is the graph of $\tau:W'\to W$, with
$$\tau=A+\wt{\Om}.$$

Now let us denote by 
$$\AA(f)\sub j_*\OO_{\UU\times\ov{\UU}}$$ 
the $\OO_{\SS^+\times\ov{\SS^+}}$-subalgebra generated by elements $p_1^*(x/f)$, $p_2^*(\ov{x}/\ov{f})$,
with $x\in\NN^2$.
Note that the entries of $p_1^*\tau$ and $p_2^*\ov{\tau}$ are in $\AA(f)$.
Furthermore, since on $\SS_{\bos}^+$ the Lagrangian subbundles $\pi_{0*}\om_{C/\SS_{\bos}^+}$ and
$\ov{\pi_{0*}\om_{\CC/\SS_{\bos}^+}}$ in $R^1\pi_*(\C_{C/\SS_{\bos}^+})$ are transversal and come from the
usual moduli space of curves, we get 
 that $p_1^*A-p_2^*\ov{A}$ is invertible near the quasi-diagonal. This implies that
$${\det}(p_1^*\tau-p_2^*\ov{\tau})^{-1}\in \AA(f).$$ 

Finally, since $\th_W^{-1}$ differs from $f^2$ by an invertible function, using \eqref{s-main-formula-eq} we obtain
$$s=p_1^*f^2\cdot p_2^*\ov{f}^2\cdot a,$$
with $a\in \AA(f)$. This immediately shows that $s$ is meromorphic.
Furthermore, since 
$$s^5=p_1^*f^{10}\cdot p_2^*\ov{f}^{10}\cdot a^5,$$
using the fact that $a^5\in\AA(F)$, we see that  $s^5$ is regular near the quasi-diagonal in $\SS^+\times\ov{\SS^+}$
provided $\NN^{22}=0$.
Recall that the number of odd variables on $\SS^+$ is $2g-2$. Thus, $\NN^{22}=0$ as long
as $2g-2\le 20$, i.e., $g\le 11$. This finishes the proof.
\qed



\subsection{The pole of the superperiod map along the divisor}

Let $S$ be a superscheme, $j:U\hra S$ a complement to a Cartier divisor
$D_0\sub S_{\red}$,
so $D_0$ is locally given by an equation $f=0$, where $f$ is a non-zero-divisor.
Let us assume in addition that each local equation of $D_0$ is 
a non-zero-divisor in $\bigoplus_{n\ge 0}\NN^n/\NN^{n+1}$,
where $\NN\sub \OO_S$ is the nilradical (this condition is automatic in the case of a smooth superscheme).
This implies that the natural homomorphism $\OO_S\to j_*\OO_U$ is injective. 

Now assume that $D_0$ is extended to a Cartier divisor $D\sub S$ (locally given by an even equation,
reducing to an equation of $D_0$). 
Let us define the sheaf of rings on $S$, $\AA_D\sub j_*\OO_U$ by
$$\AA_D:=\OO_S+\NN^2(D)+\NN^4(2D)+\ldots.$$
Note that locally any extension $\wt{f}\in\OO_S$ of an equation $f$ of $D_0$
is a non-zero-divisor in $\OO_S$, due to our assumption on $f$, so it gives a Cartier divisor on $S$ extending 
$D_0$. We will show below that in fact $\AA_D$ depends only on $D_0$.

Note that $\AA_D$ is a coherent sheaf of $\OO_S$-algebras, so it defines a superscheme $S_D$ with
a finite morphism to $S$, so that the embedding of $U$ into $S$ factors as the composition
$$U\hra S_D\to S.$$
Furthermore, we have $(S_D)_{\red}=S_{\red}$.


Let us also set
$$\II_D:=\NN\AA_D=\NN+\NN^3(D)+\NN^5(2D)+\ldots\sub\AA_D.$$
It is clear that $\II_D$ is an ideal in $\AA_D$. 

\begin{lemma} (i) The sheaves $\AA_D$ and $\II_D$ depend only on the reduced divisor $D^{red}\sub S^{red}$.

\noindent (ii)
There is a natural isomorphism of sheaves of rings
$$\ov{\AA}_D:=\AA_D/\II_D\simeq \OO_S/\NN\oplus\bigoplus_{n\ge 1}\NN^{2n}/\NN^{2n+1}(nD_0)|_{D_0}.$$
Thus, the closed sub-superscheme $\ov{S}_D\sub S_D$, associated with $\II_D$, is equipped with a natural
$\G_m$-action such that the invariant locus is precisely $S_{\red}$.
\end{lemma}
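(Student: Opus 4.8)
The plan is to treat $\AA_D$ as an $\OO_S$-subalgebra of $j_*\OO_U$ and to compute $\ov{\AA}_D$ through its associated graded. First I would record that $\AA_D=\sum_{n\ge0}\NN^{2n}(nD)$ really is a sheaf of $\OO_S$-algebras: using $\NN^{2n}=(\NN^2)^n$ and $\tfrac{m}{\wt f^{a}}\cdot\tfrac{m'}{\wt f^{b}}=\tfrac{mm'}{\wt f^{a+b}}$ with $mm'\in\NN^{2(a+b)}$, one sees $\AA_D$ is generated over $\OO_S$ by $\NN^2(D)=\wt f^{-1}\NN^2$, where $\wt f$ is a local even equation of $D$. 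For (i), rescaling $\wt f$ by a unit leaves $\wt f^{-n}\NN^{2n}$ unchanged, so only the nilpotent part of the extension matters: two even extensions $\wt f,\wt f'$ of one local equation of $D^{\red}$ differ by $\nu:=\wt f'-\wt f\in\NN^{2}$ (here I use that $\NN/\NN^2$ is odd, as for smooth superschemes, so an even element of $\NN$ lies in $\NN^2$). In the localization $\OO_S[\wt f^{-1}]$ the element $\nu/\wt f$ is nilpotent, so $(\wt f')^{-n}=\sum_{J\ge0}c_J\,\nu^{J}\wt f^{-(n+J)}$ is a finite sum; applied to $m\in\NN^{2n}$ each term $m\nu^{J}\wt f^{-(n+J)}$ has numerator in $\NN^{2n+2J}=\NN^{2(n+J)}$, hence lies in $\AA_D$. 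Thus $\AA_{D'}\subseteq\AA_D$, and by symmetry $\AA_D=\AA_{D'}$; as $\NN$ is intrinsic, $\II_D=\NN\AA_D$ is equally independent of the choice, which is (i). It is essential that $\nu\in\NN^2$ and not merely $\NN$, for otherwise the exponent $2(n+J)$ would drop to $2n+J$ and the terms would leave $\AA_D$.

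For (ii) I would introduce the graded sheaf of $\OO_S/\NN$-algebras
\[
B:=\OO_S/\NN\ \oplus\ \bigoplus_{n\ge1}\big[(\NN^{2n}/\NN^{2n+1})(nD^{\red})\big]\big|_{D^{\red}},
\]
with product $B^{(a)}\otimes B^{(b)}\to B^{(a+b)}$ induced by the multiplication of $\operatorname{gr}_{\NN}\OO_S$ and $\OO(aD^{\red})\otimes\OO(bD^{\red})\simeq\OO((a+b)D^{\red})$, restricted to $D^{\red}$. The asserted isomorphism is realized by the $\OO_S$-algebra map $\Psi\colon\AA_D\to B$ sending $\wt f^{-n}m$ with $m\in\NN^{2n}$ to the class of $m\otimes\wt f^{-n}$ in $B^{(n)}$: reduce $m$ modulo $\NN^{2n+1}$, twist by $\OO(nD^{\red})$, and restrict to $D^{\red}$.

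The heart of the argument, and the step I expect to be the main obstacle, is the well-definedness of $\Psi$, where the non-zero-divisor hypothesis is exactly what is needed. I would prove the key fact: if $y\in\NN^{2N}\cap\wt f\OO_S$, say $y=\wt f g$, then $g\in\NN^{2N}$ and $\bar y=\bar f\,\bar g\in\bar f\cdot(\NN^{2N}/\NN^{2N+1})$. This holds because the leading form of $y$ in $\operatorname{gr}_{\NN}\OO_S$ is $\bar f$ times that of $g$, and $\bar f$ being a non-zero-divisor in $\operatorname{gr}_{\NN}\OO_S$ preserves the order of vanishing along $\NN$, so $y\in\NN^{2N}$ forces $g\in\NN^{2N}$. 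Consequently the two presentations $\tfrac{\wt f g}{\wt f^{N}}=\tfrac{g}{\wt f^{N-1}}$ of one element both map to $0$: to $0$ in $B^{(N)}$ because $\bar f$ dies on $D^{\red}$, and to $0$ in $B^{(N-1)}$ because $g\in\NN^{2N}\subseteq\NN^{2N-1}$. An induction on the largest denominator power occurring in a relation $\sum_n\wt f^{-n}m_n=0$ then shows all components map to zero, so $\Psi$ is well defined (and manifestly multiplicative).

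Finally I would identify $\ker\Psi=\II_D$. The map $\Psi$ is surjective and kills $\II_D=\sum_n\wt f^{-n}\NN^{2n+1}$; moreover the same non-zero-divisor computation identifies the $N$-th graded piece of the denominator-power filtration of $\AA_D/\II_D$ with $\NN^{2N}/(\NN^{2N+1}+(\NN^{2N}\cap\wt f\OO_S))\simeq(\NN^{2N}/\NN^{2N+1})|_{D^{\red}}$ for $N\ge1$, and with $\OO_S/\NN$ for $N=0$, which are precisely the summands of $B$. Since $\Psi$ induces these isomorphisms on an exhaustive, bounded-below filtration, it descends to an isomorphism of sheaves of rings $\ov{\AA}_D=\AA_D/\II_D\simeq B$, proving the displayed formula. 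The nonnegative grading of $B$ is the asserted $\G_m$-action; its degree-zero part is $\OO_S/\NN=\OO_{S^{\red}}$, and the ideal $B_{+}=\bigoplus_{n\ge1}B^{(n)}$ generated by the positive-degree part cuts out the fixed locus $\Spec(B^{(0)})=S^{\red}$, giving the last assertion.
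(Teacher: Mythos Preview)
Your proof is correct and follows essentially the same route as the paper's: both arguments hinge on the same ``key fact'' (that $\wt f g\in\NN^{k}$ forces $g\in\NN^{k}$, by the non-zero-divisor hypothesis on $\bar f$ in $\operatorname{gr}_\NN\OO_S$), and both reduce (ii) to an induction on the top denominator power in a presentation $\sum_n\wt f^{-n}m_n$. Your packaging via the graded target $B$ and the denominator-power filtration is slightly more conceptual than the paper's explicit coefficient-chasing, and you are more explicit than the paper about why two even extensions of $f$ differ by an element of $\NN^2$; but the mathematical content is the same.
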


\begin{proof}
(i) We need to check that the sheaf $\OO_S+\frac{\NN^2}{f}+\frac{\NN^4}{f^2}+\ldots$ does not change if
we replace the local equation $f$ of $D$ by $f+n_2$ where $n_2\in \NN^2$. But this follows easily
from the formula
$$\frac{1}{f+n_2}=\frac{1}{f}(1-\frac{n_2}{f}+\frac{n_2^2}{f^2}-\ldots).$$

\noindent
(ii) First, let us check that the map
\begin{equation}\label{AD/ID-isom-map-eq}
\begin{array}{l}
\AA_D/\II_D\to \OO_S/\NN\oplus\bigoplus_{n\ge 1}\NN^{2n}/(\NN^{2n+1}+\NN^{2n}(-D)):\\
a_0+\frac{a_2}{f}+\frac{a_4}{f^2}+\ldots\mapsto (a_0\mod\NN,(a_{2n}\mod \NN^{2n+1}+\NN^{2n}(-D))_{n\ge 1})
\end{array}
\end{equation}
is well defined (here $a_{2i}\in\NN^{2i}$ and $f$ is a local equation of $D$). Indeed, suppose 
$$a_0+\frac{a_2}{f}+\frac{a_4}{f^2}+\ldots+\frac{a_{2n}}{f^n}=a_1+\frac{a_3}{f}+\ldots+\frac{a_{2n+1}}{f^n}\in \II_D,$$
where $a_i\in\NN^i$. 
We need to check that $a_0\in \NN$ and 
$a_{2i}\in \NN^{2i+1}+f\NN^{2i}$ for $i\ge 1$. 

Note that since the multiplication by $f$ is injective on all the quotients $\NN^i/\NN^{i+1}$, for $i\ge 0$,
it is injective on $\OO_S/\NN^i$. Hence, for $a\in \OO$, if $f^ma\in \NN^i$ then $a\in \NN^i$ (for $m\ge 0$).
We have $f^na_0\in\NN$, hence $a_0\in\NN$. Next, for every $i\ge 1$, we have
$$f^{2n-i}[f^i(a_0-a_1)+f^{i-1}(a_2-a_3)+\ldots+f(a_{2i-2}-a_{2i-1})+a_{2i}]\in \NN^{2i+1},$$
hence, we get that $fa+a_{2i}\in \NN^{2i+1}$ for some $a\in \OO$. But this implies that $fa\in\NN^{2i}$, so $a\in\NN^{2i}$,
therefore, $a_{2i}\in \NN^{2i+1}+f\NN^{2i}$, as claimed.

Thus, the map \eqref{AD/ID-isom-map-eq} is well defined. It is clearly surjective. Now suppose
$a\in \AA_D$ gives an element in its kernel. Then we have
$$a=a_1+\frac{a_3+fb_2}{f}+\frac{a_5+fb_4}{f^2}+\ldots=
(a_1+b_2)+\frac{a_3+b_4}{f}+\ldots$$
which is in $\II_D$, so $a\equiv 0\mod\II_D$.
\end{proof}


\begin{definition} For a Cartier divisor $D_0\sub S_{\red}$, whose local equations are non-zero-divisors
in $\bigoplus_{i\ge 0}\NN^i/\NN^{i+1}$,
we define the sheaf of rings on $S$,
$\AA_{D_0}\sub j_*\OO_U$, by choosing locally an extension of $D_0$ to a Cartier
divisor $D\sub S$ and setting
$$\AA_{D_0}:=\AA_D.$$
We also have a well defined ideal $\II_{D_0}\sub \AA_{D_0}$, given locally by $\II_D\sub \AA_D$.
We denote by $S_{D_0}$ the superscheme over $S$ with the structure sheaf $\AA_{D_0}$,
and by $\ov{S}_{D_0}\sub S_{D_0}$ the sub-superscheme associated with $\II_{D_0}$.
\end{definition}

Now assume that $f:U\to X$ is a morphism of superschemes, 
such that corresponding morphism $f_0:U_{\red}\to X$ extends
to a regular morphism $f_{\red}:S_{\red}\to X$. 

\begin{definition}
We say that $f$ has {\it regular pole} along $D_0$ if locally the pull-backs of
regular functions from $X$ belong to $\AA_{D_0}$. Equivalently, we require that $f$ extends to a regular morphism
$\wt{f}:S_{D_0}\to X$.
\end{definition}

Note that if $f$ has a regular pole along $D_0$ then we have the residual map
$$\Res_{D_0}(f):\ov{S}_{D_0} \to X$$
obtained as the composition 
$$\ov{S}_{D_0}\to S_{D_0}\rTo{\wt{f}} X.$$
Furthermore, the restriction of $\Res_{D_0}(f)$ to $S_{\red}$ is $f_{\red}$.

Now let $\SS^+$ denote the even component of the moduli of supercurves, and let 
$\UU\sub \SS^+$ be the open substack corresponding to theta-characteristics with vanishing cohomology.
Then we have a well defined superperiod map
\begin{equation}\label{superperiod-map-LG-eq}
\per:\wt{\UU}\to \LL\GG,
\end{equation}
where $\LL\GG$ is the Lagrangian Grassmannian of a $2g$-dimensional symplectic space over $\C$,
$\wt{\UU}\to \UU$ is the covering corresponding to a choice of a symplectic basis in cohomology.

Our results show that the superperiod map has a regular pole along the divisor $\DD_0$ where the theta-characteristic
has nontrivial global sections.

\begin{prop}\label{regular-pole-prop} 
The superperiod map \eqref{superperiod-map-LG-eq} has a regular pole
along the preimage of the divisor $\DD_0\sub \SS_{\bos}^+$.
\end{prop}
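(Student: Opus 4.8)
The plan is to reduce the statement to the structure of the superperiod matrix established in the proof of Theorem~\ref{s5-thm} and then to match the resulting pole against the filtration defining $\AA_{\DD_0}$. First I would work on an \'etale chart $S\to\MM^+$ trivializing the symplectic basis (this is what the covering $\wt\UU\to\UU$ furnishes) and localize near a point $s$ of the preimage of $\DD_0$. As in that proof, I would pick a symplectic splitting $R^1\pi_*\C_{X/\MM^+}=W\oplus W'$ coming from a real Lagrangian splitting of $H^1(C,\R)$, arranged so that the classical Lagrangian $\pi_{0*}\om_{C/\MM^+_0}$ is transverse to $W$ along $S^{\red}$ (possible since the underlying curve has a finite period matrix, and transversality is an open condition). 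This choice fixes the standard affine chart of $\LL\GG$ of Lagrangians transverse to $W$, with coordinates the entries of the symmetric morphism $\tau\colon W'\to W$; in these coordinates $\per$ is exactly the superperiod matrix $\tau$, whose graph is $\pi_*\om_{X/\UU}$. Because the classical period map is regular and transverse for this choice, the reduced map $\per^{\red}\colon S^{\red}\to\LL\GG$ already lands in this chart, so it suffices to show that each coordinate $\tau_{ij}$ pulls back into $\AA_{\DD_0}$.

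Next I would invoke the decomposition $\tau=A+\wt\Om$ produced in the proof of Theorem~\ref{s5-thm}, where $A\colon W'\to W$ is a regular morphism on $\MM^+$ whose reduction is the classical period matrix, and $\wt\Om$ is a morphism with entries in $\NN^2$ over $\UU$. By Theorem~\ref{square-root-thm} there is, near $s$, a regular even function $f$ vanishing on $\DD_0$, reducing to a local equation of $\DD_0$, with $\th_W^{-1}=f^2$ up to a unit and such that $f\wt\Om$ is regular on $\MM^+$. Since $\AA_{\DD_0}$ depends only on $\DD_0^{\red}$ and is invariant under $f\mapsto f+n_2$ with $n_2\in\NN^2$, I may take this $f$ as the local equation used in the definition of $\AA_{\DD_0}$.

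I would then verify membership in $\AA_{\DD_0}$ entrywise. Clearly $A\in\OO_S\subset\AA_{\DD_0}$. For $\wt\Om$, its entries lie in $\NN^2\OO_\UU$ while the entries of $f\wt\Om$ are regular on $S$; using the standing hypothesis that a local equation of $\DD_0$ is a non-zero-divisor on $\bigoplus_n\NN^n/\NN^{n+1}$ (automatic since $\MM^+$ is smooth), multiplication by $f$ is injective on each $\OO_S/\NN^i$, so $f^m a\in\NN^2$ forces $a\in\NN^2$. Applying this to $(f\wt\Om)_{ij}=f\cdot\wt\Om_{ij}$, regular on $S$ and lying in $\NN^2\OO_\UU$, shows that its entries in fact lie in $\NN^2$. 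Hence each entry of $\wt\Om=(f\wt\Om)/f$ lies in $\NN^2(D)\subset\AA_{\DD_0}$, so $\tau_{ij}\in\AA_{\DD_0}$. Since $\AA_{\DD_0}$ is a sheaf of rings and the coordinate ring of the chart is a polynomial ring in the $\tau_{ij}$, every pulled-back regular function lies in $\AA_{\DD_0}$ (a degree-$n$ monomial landing in $\NN^{2n}(nD)$). Thus $\per$ extends to a regular morphism $S_{\DD_0}\to\LL\GG$, which is exactly the definition of a regular pole along $\DD_0$.

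The genuinely substantive input is Theorem~\ref{square-root-thm}, which supplies the even square root $f$ of $\th_W^{-1}$ together with the regularity of $f\wt\Om$; granting this, the argument above is essentially bookkeeping. The point that requires care, and which I regard as the crux, is that the pole is \emph{simple} with numerator in $\NN^2$, matching precisely the $n=1$ term $\NN^2(D)$ of $\AA_{\DD_0}$, and that the algebra is closed under the products needed to absorb arbitrary regular functions on $\LL\GG$. This closure is exactly the reason $\AA_{\DD_0}$ is defined as the filtered algebra with $n$-th term $\NN^{2n}(nD)$ rather than merely as the $\OO_S$-module $\OO_S+\NN^2(D)$.
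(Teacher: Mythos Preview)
Your proof is correct and follows the same route as the paper: both derive the statement directly from the analysis in the proof of Theorem~\ref{s5-thm}, the key input being Theorem~\ref{square-root-thm}, which supplies the even function $f$ reducing to a local equation of $\DD_0$ with $f\wt\Om$ (equivalently $f\Om$) regular and the singular part lying in $\NN^2$. The paper's own proof is the two-line observation that the superperiod matrix has entries in $\NN^2$ with $f$ times it regular; your version spells out the choice of affine chart on $\LL\GG$ and the verification that $(f\wt\Om)_{ij}\in\NN^2$ (the one small step you leave implicit is that an element of $\OO_S$ whose restriction to $\UU$ lies in $\NN^2\OO_\UU$ already lies in $\NN^2$, which follows since $\OO_S/\NN^2$ has no $f$-torsion).
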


\begin{proof} This follows from the proof of Theorem \ref{s5-thm}.
We have seen in that proof that locally near a point on $\DD_0$, the superperiod map is given by
a symmetric matrix $\Omega$ with entries in $\NN^2$ such that $f\Omega$ is regular, where
$f$ reduces to an equation of $\DD_0$.
\end{proof}

\begin{remark} In the case when $S$ is smooth and the divisor $D_0\sub S_{\red}$ is smooth, we can
define a superorbifold $\wt{S}_{D_0}$ such that $S_{D_0}$ is the coarse moduli space of $\wt{S}_D$. 
Namely, let $f$ be a local function on $S$ such that $f\mod\NN$ defines $D_0$.
Let us consider the double covering $Z_D\to S$ given by $t^2=f$, and let $\wt{Z}_D$ be the blow-up of
$Z_D$ at the ideal $(t)+\NN$. If $(x_1,x_2,\ldots,x_n,(\th_j))$ are local (even and odd) coordinates on $S$, such that $x_1=f$ then 
$(t,x_2,\ldots,x_n,(\frac{\th_j}{t}))$ are local coordinates on $\wt{Z}_D$, so $\wt{Z}_D$ is a supermanifold. 
We have a natural action
of $\Z/2$ on $\wt{Z}_D$, sending $t$ to $-t$, and we consider the quotient-stack $\wt{Z}_D/(\Z/2)$.
One can check that the resulting stacks glue into a global stack $\wt{S}_{D_0}$ depending only on $D_0$.
Thus, away from the singular locus of $\DD$, the superperiod map extends to a regular map from a superorbifold over
$\SS^+$.
\end{remark}

\section{Second variation of the period map as a Massey product}\label{s-6}

\subsection{Second variation map}

Let $X$ be a smooth supervariety. Let us denote by 
$$\Om'_X\sub \Om_X$$ the $\OO_X$-submodule
generated by all $df$, where $f$ is an even function. Locally, $\Om'_X$ is generated by $dx_i$ and $d(\th_i\th_j)$
for $i<j$. The restriction $\Om'_X|_{X_{\red}}$ is locally free and fits into an exact sequence
$$0\to \NN^2/\NN^3\to \Om'_X|_{X_{\red}}\to \Om_{X_{\red}}\to 0.$$
Note that if $x\in X_{\red}$ is a point, $\fm\sub \OO_{X,x}$ the corresponding maximal ideal,
then the fiber of $\Om'_X|_{X_{\red}}$ at $x$ can be identified with $\fm^{ev}/(\fm^{ev})^2$,
where $\fm^{ev}$ is the even part of $\fm$.

Now assume that we have a morphism $f:X\to Y$,
where $Y$ is reduced (hence, purely even). Let $f_{\red}=f|_{X_{\red}}$ be the corresponding
map $X_{\red}\to Y$.
Then
we have the induced map $f^*\Om_Y\to \Om'_X$, and hence,
\begin{equation}\label{2nd-variation-dual-eq}
(f_{\red})^*\Om_Y\to \Om'_X|_{X_{\red}}.
\end{equation}
Furthermore, its composition with the projection to $\Om_{X_{\red}}$ is exactly the map induced by $df_{\red}$.
Dually, we get a diagram
\begin{diagram}
0&\rTo{}&T_{X^\red}&\rTo{}& (\Om'_X|_{X^\red})^\vee&\rTo{}& (\NN^2/\NN^3)^\vee&\rTo{}& 0\\
&&                           &\rdTo{df_{\red}}&\dTo{}\\
&&&&(f_{\red})^*T_Y
\end{diagram}
It follows that there is a unique map of $\OO_{X^\red}$-modules,
$$d^{(2)}f:{\bigwedge}^2(\NN/\NN^2)^\vee\simeq  (\NN^2/\NN^3)^\vee\to \coker(df_{\red})$$
induced by the vertical arrow in the above diagram. We call $d^{(2)}f$ the {\it second variation map}.

Given a point $x\in X_{\red}$, to compute $d^{(2)}f$ at $x$, we observe that
the map \eqref{2nd-variation-dual-eq} at $x$ is given by the natural map
\begin{equation}\label{2nd-variation-pointwise-eq}
\fm_y/\fm_y^2\to \fm_x^{ev}/(\fm_x^{ev})^2.
\end{equation}
Note that the latter map is well defined also when $X$ is replaced by its sub-superscheme.
Thus, we can choose coordinates $(x_1,\ldots,x_n;\th_1,\ldots,\th_m)$ near $x$ and consider the closed subscheme
$$X^{odd,\le 2}(x)\sub X$$
given by the ideal generated $(x_1,\ldots,x_n)+\fm_x^3$. 
Note that the algebra of functions on $X^{odd,\le 2}(x)$ is the truncated exterior algebra 
$\bigwedge(\th_1,\ldots,\th_m)/\bigwedge^{\ge 3}$. 
Let $\ov{\fm}_x$ denote the maximal ideal in this algebra.
Then it is easy to see that the composition
$$(\NN^2/\NN^3)|_x\simeq \NN^2/(\NN^3+\fm\NN^2)\to  \fm_x^{ev}/(\fm_x^{ev})^2\to \ov{\fm}_x^{ev}$$
is an isomorphism. Thus, the dual of the map $d^{(2)}f$ at $x$ can be identified with the restriction 
to $\ker((df^\red)^*)$ of the map
$$\fm_y/\fm_y^2\to \ov{\fm}_x^{ev},$$
induced by the map $X^{odd,\le 2}(x)\to X\to Y$.

\begin{remark} It is clear that the map \eqref{2nd-variation-pointwise-eq} can also be interpreted
as the differential of the induced map of usual schemes
$$X/\Ga\to Y,$$
where $X/\Ga$ is the scheme $(X,\OO_X^{ev})$ called the {\it bosonic quotient} of $X$ in \cite{CV}.
\end{remark}

\subsection{Second variation of a map to the Grassmannian}

Assume that we have an exact sequence of vector bundles of even rank over a supervariety $X$,
$$0\to \SS\to V\ot\OO_X\to \QQ\to 0,$$
so that we have an associated morphism $f:X\to \Gr(V)$ to the Grassmannian of $V$.

Given a point $x\in X$, we have the induced embedding of the fibers at $x$,
$\SS|_x\hra V$. Now the composition
$$\SS|_x\hra V\hra V\ot\OO_X\to \QQ$$ 
factors through a map
$$\SS|_x\to \fm_x \QQ.$$
The induced map
$$\SS|_x\to \fm_x \QQ/\fm_x^2\QQ\simeq (\fm_x/\fm_x^2)\ot \QQ|_x$$
corresponds to a map
$$T_x X\to \Hom(\SS|_x,\QQ|_x)$$
which is precisely the tangent map to $f$ at $x$.

To calculate $d^{(2)}f_x$, we apply a similar to procedure to the restriction 
$$0\to \ov{\SS}\to V\ot \OO_{X^{odd,\le 2}(x)}\to \ov{\QQ}\to 0$$
of our sequence of vector bundles
to a sub-superscheme $X^{odd,\le 2}(x)$ (which depends on a choice of coordinates near $x$).
Namely, as above, from the sequence of bundles on $X^{odd,\le 2}(x)$ we get a map
$$\SS|_x\hra V\to (\ov{\fm}_x\ov{\QQ})^{ev}\simeq \ov{\fm}^{ev}_x\ot \QQ|_x.$$
Now $d^{(2)}f_x$ is the composition 
of the corresponding map 
$$(\NN^2/\NN^3)|_x^\vee\simeq (\ov{\fm}^{ev}_x)^\vee\to\Hom(\SS|_x,\QQ|_x)$$
with the projection to $\coker((df_{\red})_x)$. 

Note that the analog of the map $d^{(2)}f_x$ makes sense if we start with an exact sequence
$$0\to \SS\to \VV\to \QQ\to 0$$
of bundles on a supermanifold, where $\VV$ is equipped with an integrable connection.
Namely, the connection gives a trivialization of $\VV/\fm_x^3\VV$, and then we can apply the
same construction.

\subsection{Second variation of the period map}

Let $\pi:X\to S=\SS^+$ be the universal supercurve over the even component of the moduli space of supercurves.
Then over the open subset corresponding to theta-characteristics with no global sections, the exact sequence 
\eqref{const-sh-resolution}
induces an exact sequence of bundles over $S$,
\begin{equation}\label{period-ex-seq}
0\to \pi_*\om_{X/S}\to R^1\pi_*\C_{X/S}\to R^1\pi_*\OO_X\to 0
\end{equation}
Furthermore, the bundle in the middle carries a Gauss-Manin connection, so we have the corresponding
period map, $\per:\wt{S}\to \Gr$, from a covering of $S$ to the Grassmannian.
We would like to calculate the corresponding second variation map. As we observed above, the second variation
map depends only on the flat connection on $R^1\pi_*\C_{X/S}$, so it can be calculated over $S$.

\begin{theorem}
Let $C$ be a smooth projective curve of genus $g$, $L$ a theta-characteristic on $C$ with
$H^0(C,L)=0$. Let $[C,L]$ be the corresponding point of $S$.

(i) The tangent map 
\begin{equation}\label{kappa-def-eq}
\kappa=(d\per_{\red})_{[C,L]}: H^1(C,\om_C^{-1})\to \Hom(H^0(C,\om_C),H^1(C,\OO_C))
\end{equation}
is given by the cup-product $H^1(C,\om_C^{-1})\ot H^0(C,\om_C)\to H^1(C,\OO_C)$.

\noindent
(ii) The negative of the second variation map,
$$-d^{(2)}\per_{[C,L]}: {\bigwedge}^2 H^1(C,L^{-1})\to \Hom(H^0(C,\om_C),H^1(C,\OO_C))/\im(\kappa),$$
is given by the skew-symmetrization of the Massey product map
\begin{equation}\label{main-MP-eq}
H^1(C,L^{-1})\ot H^1(C,L^{-1})\to  \Hom(H^0(\om_C),H^1(\OO_C))/\im(\kappa)
\end{equation}
sending $x_1\ot x_2$ to the map
$$\alpha \mapsto d^{-1}(x_1\cdot x_2)\cdot \alpha+x_1\cdot d^{-1}(x_2\cdot \alpha)$$
viewed modulo the image of $\kappa$ (where $d$ is the differential in an appropriate dg-model).
\end{theorem}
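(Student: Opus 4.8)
The plan is to read off both maps directly from the defining description of $\per$ as the classifying map of the subbundle $\pi_*\om_{X/S}\sub R^1\pi_*\C_{X/S}$. Two facts set up the computation. First, by \eqref{period-ex-seq} we may identify $\pi_*\om_{X/S}$ with $\ker(R^1\pi_*\C_{X/S}\to R^1\pi_*\OO_X)$, so $\per$ records how this kernel tilts inside the ambient bundle; since $R^1\pi_*\C_{X/S}$ carries the flat Gauss--Manin connection, it is canonically trivialized as $H^1(C,\C)\ot\OO$ near $[C,L]$, and I will use this trivialization as the reference against which the tilt is measured. Second, under $\C_{X/S}\hra\OO_C$ the induced projection $H^1(C,\C)\to H^1(\OO_C)$ is exactly the Hodge projection onto $H^{0,1}$ with kernel $H^0(\om_C)$, so the variation of $\per$ is a map $H^0(\om_C)\to H^1(\OO_C)$ as required. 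The recipe of the preceding subsection then computes $d\per$ and $d^{(2)}\per$ as the first- and second-order parts, over the fat point $X^{odd,\le 2}([C,L])$, of the composite $H^0(\om_C)=\SS|_{[C,L]}\hra\VV\to\QQ=H^1(\OO_C)$, with $\VV$ trivialized by the connection.

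To make this explicit I would model the deformation by the gluing construction of the versal family: the even directions deform $C$ by a \v Cech representative $w$ of $\mu\in H^1(C,\om_C^{-1})=H^1(C,T_C)$, and the odd directions are realized over $A=\bigwedge^{\le 2}(\NN/\NN^2)$ by the transition automorphism $T_\varphi=\exp(v(\varphi))$ with $\varphi=\sum\eta_ib_i$ and $[b_i]$ a basis of the odd tangent space $H^1(C,L^{-1})$. Crucially $v(\varphi)$ is a vertical vector field, so $T_\varphi$ acts trivially on $\C_{X/S}$ --- which is precisely what provides the flat trivialization above. A frame $\beta$ of $\pi_*\om_{X/S}$ lifting $\alpha\in H^0(\om_C)$ is then a pair of local forms with $\beta_0=\exp(\mathcal L_{v(\varphi)})\beta_1$, and its image $\partial\beta$ in $H^1(C,\C)\ot A$ is computed from local $\delta$-potentials $s_i$ by $\partial\beta=[T_\varphi(s_1)-s_0]$; projecting to $H^1(\OO_C)$ gives the value of $\per$. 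The whole calculation is carried out cochain-by-cochain in a dg-algebra model $\mathcal A$ computing $\bigoplus_j H^\bullet(C,L^j)$ with its cup products (e.g.\ a \v Cech--Dolbeault model fine enough that the relevant primitives exist), and the operations ``$d^{-1}$'' in \eqref{main-MP-eq} are the primitives in $\mathcal A$.

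Expanding in $\eta$ then yields the two statements. In the even directions, Cartan's formula gives $\mathcal L_w\alpha=d(\iota_w\alpha)$, and after discarding the exact difference $s_1-s_0$ the projection of $\partial\beta$ to $H^1(\OO_C)$ is the cocycle $\{\iota_w\alpha\}$, i.e.\ the cup product $\mu\cup\alpha$; this is part (i). In the odd directions the first-order term vanishes (matching $\Om|_{\UU_0}=0$), so the leading contribution is quadratic. The mechanism is the degeneration of two intermediate products into zero groups: $x_i\cdot\alpha$ lies in $H^1(C,L)=0$ (since $H^0(L)=0$ forces $H^1(L)=0$), and $x_1\cdot x_2$ lies in $H^2(C,L^{-2})=0$. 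Hence the first-order equation $\beta_0^{(1)}-\beta_1^{(1)}=\mathcal L_{v(\varphi)}\alpha$ is solvable with solution a primitive $d^{-1}(x_i\cdot\alpha)$, and the second-order closedness forces a primitive $d^{-1}(x_1x_2)$. Tracing these back through $\partial\beta=[T_\varphi(s_1)-s_0]$, the even part at order two collects exactly the term $\varphi\cdot\beta^{(1)}=x_1\cdot d^{-1}(x_2\cdot\alpha)$ together with the contribution $d^{-1}(x_1x_2)\cdot\alpha$ coming from the second-order part of $T_\varphi$; projecting to $H^1(\OO_C)$ and antisymmetrizing over the two odd coordinates produces the skew-symmetrized Massey product \eqref{main-MP-eq}, with the overall sign accounting for the minus in $-d^{(2)}\per$.

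I expect the principal difficulty to be the disciplined bookkeeping of this second-order supercomputation: keeping the even/odd splitting $\om_X=\om_C\oplus L$ separate throughout, handling the super-signs in $\mathcal L_{v(\varphi)}$ and --- decisively --- in the reordering of the anticommuting $\eta_i$, which is what turns the naive bilinear expression into its antisymmetrization and fixes the relative sign between the two Massey terms. Two subsidiary points must also be settled: one must take the dg-model fine enough that both primitives $d^{-1}(x_1x_2)$ and $d^{-1}(x_2\alpha)$ exist as genuine cochains (a two-set \v Cech cover is too coarse, since there $x_1x_2$ already vanishes at the cochain level), and one must check that the order-two equation is solvable, i.e.\ that the potential $H^1(C,\om_C)$-obstruction vanishes --- which it does by the residue/trace theorem, reflecting that $\per$ genuinely lands in the Lagrangian Grassmannian. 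Finally, the ambiguity in the choices of $d^{-1}$ visibly lands in $\im(\kappa)$, which is exactly why \eqref{main-MP-eq} is only well defined, and the theorem only asserts equality, modulo $\im(\kappa)$.
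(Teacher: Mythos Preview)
Your approach is essentially the paper's: both compute via the gluing construction with transition automorphism $T_\varphi$, read off the map $H^0(\om_C)\to R^1\pi_*\OO_X$ induced by \eqref{period-ex-seq}, and expand to order two in $\varphi$. Part (i) is identical.

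The one substantive point of contrast is your worry about two-set \v Cech covers. The paper works \emph{precisely} with the cover $U_0\cup U_1$ (formal disk around a point and its complement), and this is a feature, not a bug: with only one double intersection the cochain-level product $x_1\cdot x_2$ is identically zero, so $d^{-1}(x_1x_2)=0$ is a perfectly valid choice of primitive, and only the term $x_1\cdot d^{-1}(x_2\cdot\alpha)$ survives. Your concern that this cover is ``too coarse'' is therefore misplaced --- any dg-model gives the same Massey class modulo $\im(\kappa)$, and the two-set model simply picks the representative with one term gone. The paper also streamlines the bookkeeping you flag as the main difficulty: rather than tracking a deforming frame $\beta$ of $\pi_*\om_{X/S}$, it observes that since $H^*(C,L)=0$ the restriction map $d_L\colon L(U_1)\to L(U_{01})/L(U_0)$ is an isomorphism, and uses this to write down an explicit isomorphism of $\coker(d_T)$ with a cokernel involving only $\OO_C$. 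The image of $\alpha$ is then computed in one line as $-\varphi\cdot d_L^{-1}(\varphi\cdot\alpha)$, which after inserting $\varphi=\sum\eta_ib_i$ is the asserted Massey product. This sidesteps the even/odd splitting gymnastics and the super-sign bookkeeping you anticipated. Your remark about an $H^1(C,\om_C)$-obstruction is also unnecessary: the exact sequence \eqref{period-ex-seq} already guarantees $\pi_*\om_{X/S}$ is a subbundle, so there is nothing to check.
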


\begin{proof}
For any family of supercurves $X/S$, where $S$ is affine, 
with the underlying usual curve $\CC/S_{\red}$, and
a marked point $p:S_{\red}\to \CC$, we can consider the covering of $X$ by $\UU_1=X\setminus p(S^\red)$
and $\UU_0$, the formal neighborhood of $p(S^\red)$. For brevity, we will denote $p(S^\red)$ simply as $p$.
The main idea is to realize explicitly the exact sequence 
\eqref{period-ex-seq} using the covering $(\UU_0,\UU_1)$.
In addition, we will use an explicit realization of the universal curve over the subscheme $\SS^{odd,\le 2}([C,L])$.

We begin by representing $R\pi_*\OO_X$ by the complex
$$\PP^0\rTo{d_{\OO_X}} \PP^1$$
where $\PP^0=\OO_X(\UU_1)$, $\PP^1=\OO_X(\UU_0\setminus p)/\OO(\UU_0)$, and $d_{\OO_X}$
is induced by the restriction from $\UU_1$ to $\UU_0\setminus p=\UU_0\cap \UU_1$.

On the other hand, using the embedding $j:\UU_1\hra \CC$, we obtain quasi-isomorphisms 
$$\C_{X/S}\to [\OO_X\to \om_{X/S}]\to [j_*\OO_{\UU_1}\to (j_*\om_{\UU_1/S})^{ex}],$$
where
$(j_*\om_{\UU_1/S})^{ex}\sub j_*\om_{\UU_1/S}$ denotes the subsheaf of forms that are exact near $p$
(in other words, this is the sheaf image of $\de$).
Note that the latter subsheaf is not acyclic for $\pi$. However, since $j_*\OO_{\UU_1}$ is $\pi$-acyclic, we can still 
represent $\tau_{\le 1}R\pi_*\C_{X/S}$ by the two-term complex
$$\PP^0\rTo{\de} \QQ^1$$
where $\QQ^1=\om_{X/S}(\UU_1)^{ex}$ is the space of forms on $\UU_1$, exact near $p$.

Now the exact sequence \eqref{period-ex-seq} can be represented explicitly as
$$0\to \om_{X/S}(X)\to \coker(\PP^0\rTo{\de} \QQ^1)\to \coker(\PP^0\rTo{d_{\OO_X}} \PP^1)\to 0$$
where the first map sends $\a\in\om_{X/S}(X)$ to $\a|_{\UU_1}\in \QQ^1$, while the second map
is induced by the map 
$$\QQ^1\to \PP^1: \b\mapsto\de^{-1}(\b|_{\UU_0\setminus p}).$$

Now we specialize to the case when $S_{\red}$ is the point $[C,L]$, and our family $X$ over $S$  
is obtained by gluing trivial families of supercurves $S\times X_0$ and $S\times X_1$. Here
$$X_i=(U_i,(\OO\oplus L)|_{U_i}), \ i=0,1,$$
where $U_0$ is the formal neighborhood of a point $p\in C$ and $U_1=C\setminus p$.
The gluing is given by an automorphism $T$ of $S\times X_{01}$, where $X_{01}=(U_{01},(\OO\oplus L)|_{U_{01}})$.
In this case we have identifications 
\begin{align*}
&\PP^0=P^0:=\OO_{X_1}(U_1)\otimes\OO_S, \ \ \PP^1=P^1=\OO_{X_0}(U_0\setminus p)/\OO_{X_0}(U_0)\otimes \OO_S, \\ 
&\QQ^1=Q^1=\om_{X_1}(U_1)^{ex}\otimes\OO_S,\\
&\om_{X/S}(X)=\{\a_0\in \om_{X_0}(U_0)\ot\OO_S, \a_1\in \om_{X_1}(U_1)\ot\OO_S \ |\ T^*\a_1|_{U_0\setminus p}=\a_0|_{U_0\setminus p}\}\\
&=\{\a_1\in \om_{X_1}(U_1)\ot\OO_S\ |\ T^*\a_1|_{U_0\setminus p} \text{ is regular at } p\}.
\end{align*}
Under these identifications the differential $d_{\OO_X}:\PP^0\to \PP^1$ corresponds to
$$d_T: P^0\to P^1: f\mapsto T^*(f|_{U_0\setminus p}),$$
$\de:\PP^0\to \QQ^1$ corresponds to $\de:P^0\to Q^1$, 
still just induced by $\de:\OO_{X_1}(U_1)\to \om_C(U_1)$, the map $\QQ^1\to \PP^1$ corresponds to the map
\begin{equation}\label{main-period-differential}
Q^1\to P^1: \b\mapsto T^*\de^{-1}(\b|_{U_0\setminus p}).
\end{equation}
Finally, the embedding of $\om_{X/S}(X)$ into $\coker(P^0\rTo{\de} Q^1)$ sends $(\a_0,\a_1)$ to the image of $\a_1$.

The Gauss-Manin connection on $\coker(P^0\to Q^1)$ is induced by the connection on $Q^1$, such that
$\om_{X_1}(U_1)^{ex}$ are horizontal sections.

For the computation of the tangent map in (i), it is enough to consider the family over $\C[\eps]/(\eps^2)$
associated with $v\in H^1(C,T_C)$. Namely, we realize $v$ by a vector field on $U_{01}=U_0\setminus p$, 
and consider the automorphism $T$ on $(U_0\setminus p)[\eps]$ given by this vector field. 
Now given $\a\in H^0(C,\om_C)$, we consider $\a|_{U_1}$ as a horizontal section of $\coker(P^0\rTo{\de} Q^1)$,
and apply the map \eqref{main-period-differential} to it.
Note that $T^*$ is given by the Lie derivative $L_v=\de i_v$. Thus, 
$$T^*\de^{-1}(\a|_{U_0\setminus p})=\lan v,\a|_{U_0\setminus p}\ran,$$
which corresponds to taking the cup product of the class of $v$ in $H^1(C,T_C)$ with $\a\in H^0(C,\om_C)$.

For the computation of the second variation we take $S=\Spec(\bigwedge W/{\bigwedge}^{\ge 3}W)$, where
$$W:=H^1(C,L^{-1})^\vee.$$ 
Let $\varphi\in H^1(C\times S, L^{-1})$ denote the universal section given as
$$\varphi=\sum_i \eta_i\cdot b_i\in H^1(C\times S, L^{-1})=H^1(C,L^{-1})\ot \bigwedge W/{\bigwedge}^{\ge 3}W,$$
where $(\eta_i)$ is a basis of $H^1(C,L^{-1})$ and $(b_i)$ is the dual basis of $W$.
We realize $\varphi$ by a section of $L^{-1}$ over $U_{01}\times S=(U_0\setminus p)\times S$.

The automorphism $T^*$ of $(\OO_C\oplus L)\boxtimes \OO_S$ over $(U_0\setminus p)\times S$ is given by
$$(g,\psi)\mapsto (g+\varphi\psi,\psi+\varphi dg+\frac{1}{2}Q(\varphi)\psi),$$
where $g\in \OO_{C\times S}$, $\psi\in L\boxtimes \OO_S$, and $Q$ is a certain quadratic form (see Section 
\ref{s-coord}).
Using this explicit form of $T$ we can get a different presentation of $R^1\pi_*\OO_X=\coker(P^0\rTo{d_T} P^1)$.
Note that the elements of $P^0$ are pairs $(g,\psi)$, where $g\in \OO(U_1)\otimes\OO_S$, 
$\psi\in L(U_1)\otimes\OO_S$, and 
$$d_T(g,\psi)=T^*(g|_{U_0\setminus p},\psi|_{U_0\setminus p})\mod (\OO(U_0)\oplus L(U_0))\ot\OO_S.$$
Furthermore, by the assumption on the vanishing of $H^*(C,L)$, the differential
$$d_L: L(U_1)\otimes\OO_S\to (L(U_0\setminus p)/L(U_0))\ot \OO_S$$
(induced by the restriction to $U_0\setminus p$) is invertible. It follows that for any $\psi\in (L(U_0\setminus p)/L(U_0))\ot\OO_S$, we have
$$d_T(0,d_L^{-1}\psi)=(\varphi (d_L^{-1}\psi)|_{U_0\setminus p}, \psi+\frac{1}{2}Q(\varphi)(d_L^{-1}\psi)|_{U_0\setminus p}),$$
so that
$$(0,\psi)\equiv -(\varphi (d_L^{-1}\psi)|_{U_0\setminus p},0)-(0,\frac{1}{2}Q(\varphi)(d_L^{-1}\psi)|_{U_0\setminus p}) \mod \im(d_T).$$
We can iterate this procedure by applying the same identity to the second term in the right-hand side, etc. As a result, we get
$$(0,\psi)\equiv (A_\varphi ((d_L^{-1}\psi)|_{U_0\setminus p}),0) \mod \im(d_T),$$
where $A_\varphi$ is a morphism of sheaves $L\to\OO$, $A_\varphi(\psi)=-\varphi\psi+\ldots$
over $(U_0\setminus p)\times S$.
This leads to an isomorphism 
\begin{align}\label{coker-isom-eq}
&\coker(d_T)\rTo{\sim}
\coker[(\id+A_\varphi(\varphi\cdot d))\circ d_\OO:
\OO(U_1)\otimes \OO_S\to (\OO(U_0\setminus p)/\OO(U_0))\ot\OO_S]:  \nonumber\\
& (g,\psi)\mapsto g+A_\varphi ((d_L^{-1}\psi)|_{U_0\setminus p}).
\end{align}

Now to calculate $d^{(2)}\per_{[C,L]}$, we start with a global form $\a\in H^0(C,\om_C)$.
Then we consider the restriction $\a|_{U_1}$ as the corresponding horizontal section of $\coker(\de:P^0\to Q^1)$.
Then, applying the map induced by \eqref{main-period-differential} to this section we get
$$T^*(d^{-1}(\a|_{U_0\setminus p}),0)=(d^{-1}\a_{U_0\setminus p},\varphi \a|_{U_0\setminus p})=
(0,\varphi \a|_{U_0\setminus p})$$
since $d^{-1}\a_{U_0\setminus p}$ extends to $U_0$, and so is zero in $P^1$.
Finally, we apply isomorphism \eqref{coker-isom-eq} and get the element in
$\coker[(\id+A_\varphi(\varphi\cdot d))\circ d_\OO]$ represented by
$$A_\varphi([d_L^{-1}(\varphi \a|_{U_0\setminus p})]|_{U_0\setminus p})
=-\varphi\cdot [d_L^{-1}(\varphi \a|_{U_0\setminus p})]|_{U_0\setminus p}$$
since we quotient by $\bigwedge^{\ge 3}$ in $\OO_S$.

Now using the formula for $\varphi$ leads to the asserted expression in terms of the Massey product.
\end{proof}

\begin{remark}
Note that the usual Massey product
$$H^1(L^{-1})\ot H^1(L^{-1})\ot H^0(\om_C)\dashedrightarrow{} H^1(\OO_C)$$
is ill-defined since the composition $H^1(\om_C^{-1})\rTo{\cdot \alpha} H^1(\OO_C)$ is surjective
for every $\alpha\neq 0$. However, we consider instead a well defined map
$$H^1(L^{-1})\ot H^1(L^{-1})\to \Hom(H^0(\om_C),H^1(\OO_C))/\im(\kappa).$$
\end{remark}


Next, we are going to calculate the Massey product map \eqref{main-MP-eq}.

\section{Study of the Massey product}\label{s-7}

\subsection{Relation to a univalued Massey product}

First, we observe that the Massey product \eqref{main-MP-eq} can be given as
$$\alpha\mapsto m_3(x_1,x_2,\alpha),$$
where we use the minimal $A_\infty$-structure obtained by the homological perturbation (this follows from
\cite[Prop.\ 1.1]{P-CYBE}).
Next, using the fact that this $A_\infty$-structure can be chosen to be cyclic with respect to the Serre duality pairing,
we see that 
$$\langle m_3(x_1,x_2,\alpha_1), \alpha_2\rangle=\pm \langle x_1, m_3(x_2,\alpha_1,\alpha_2)\rangle,$$
for $x_i\in H^1(L^{-1})$, $\alpha_i\in H^0(\om_C)$. 

Using Serre duality we can view the map $\kappa$ (see \eqref{kappa-def-eq}) as the map
$$H^0(\om_C^{\ot 2})^*\simeq H^1(\om_C^{-1})\to H^0(\om_C)^*\ot H^0(\om_C)^*,$$
dual to the multiplication map
$$H^0(\om_C)\ot H^0(\om_C)\to H^0(\om_C^{\ot 2}).$$
Thus, the above cyclicity implies that the dual to the Massey product map \eqref{main-MP-eq}
can be identified with the map 
$$K\to \Hom(H^1(L^{-1}),H^0(\om\ot L)): \sum_i \alpha_i\ot\alpha'_i\mapsto (x\mapsto \sum_i m_3(x,\alpha_i,\alpha'_i)),$$
where $K\sub H^0(\om_C)\ot H^0(\om_C)$ is the kernel of the multiplication map.

Next, applying the $A_\infty$-identity to the elements 
$\alpha,x,\sum_i\alpha_i\ot\alpha'_i$, where $\alpha\in H^0(\om_C)$, we get
\begin{equation}\label{two-m3-eq}
\alpha\cdot \sum_i m_3(x,\alpha_i,\alpha'_i)=\sum_i m_3(\alpha,x,\alpha_i)\cdot\alpha'_i.
\end{equation}
Note that if we know the left-hand side for all $\alpha\in H^0(\om_C)$, then this determines the element
$\sum_i m_3(x,\alpha_i,\alpha'_i)\in H^0(\om_C\ot L)$, since the map
$$H^0(\om_C\ot L)=\Hom(\om_C, \om_C^2\ot L)\to \Hom(H^0(\om_C),H^0(\om_C^2\ot L)):y\mapsto (\a\mapsto \a\cdot y)$$
is injective for $g\ge 1$ (since $\om_C$ is generated by its global sections).

Thus, our Massey product is uniquely determined by the map
\begin{equation}\label{MP-uni-eq}
m_3: H^0(C,\omega_C)\otimes H^1(C,L^{-1})\otimes H^0(C,\omega_C)\to
  H^0(C,\omega_C\otimes L),
\end{equation}
which is a univalued Massey product due to the assumption $H^0(C,L)=H^1(C,L)=0$.

\subsection{Calculation of the Massey product via the triangulated structure} 

Let $\mathcal T$ be a triangulated category,
$\alpha\in\Hom(A,B),\beta\in\Hom(B,C[1]),\gamma\in\Hom(C,D)$
composable morphisms
\[
  A\stackrel{\alpha}\longrightarrow B\stackrel{\beta}\longrightarrow
  C[1]\stackrel{\gamma[1]}\longrightarrow D[1]
\]
such that $\beta\circ\alpha=0$ and $\gamma[1]\circ\beta=0$. Then a
Massey product
\[
  \operatorname{MP}(\alpha,\beta,\gamma)\in
  \Hom(A,D)/(\gamma\circ\Hom(A,C)+\Hom(B,D)\circ\alpha)
\]
is defined as follows. The map $\beta$ is part of an exact triangle
\[
  C\stackrel{f}\longrightarrow V\stackrel{g}\longrightarrow
  B\stackrel{\beta}\longrightarrow C[1].
\]
With our assumptions on the compositions of $\alpha,\beta,\gamma$
there exist morphisms $\tilde\alpha\colon A\to V$ and
$\tilde\gamma\colon V\to D$ such that $g\circ\tilde\alpha=\alpha$ and
$\tilde\gamma\circ f=\gamma$. The Massey product is
$\tilde\gamma\circ\tilde\alpha\in\Hom(A,D)$. Any two choices of
$\tilde\alpha$ differ by $f\circ \sigma$ with $\sigma \in \Hom(A,C)$;
the corresponding Massey products differ by $\gamma\circ
\sigma$. Similarly, changing the choice of $\tilde\gamma$ changes the
Massey product by $\tau\circ\alpha$ for some $\tau\in \Hom(B,D)$. Thus,
the Massey product is a well-defined element of the quotient of
$\Hom(A,D)$ by these ambiguities.  A particularly simple situation,
which occurs in our case, is when $\Hom(A,C)$ and $\Hom(B,D)$
vanish. In this case, the compositions always vanish and there is no
ambiguity: the Massey product is a well-defined map
\[
  \operatorname{MP}\colon
  \Hom(A,B)\otimes\Hom(B,C[1])\otimes\Hom(C,D)\to\Hom(A,D).
\]

Now we can apply the above recipe to calculate the Massey product
\eqref{MP-uni-eq}, associated with $(C,L)$, where $H^*(C,L)=0$. Note that the compatibility of the Massey products
calculated using a dg-enhancement and using a triangulated structure is well known and goes back
to \cite[Sec.\ 5.A]{BK} (see \cite[Sec.\ 3.2]{FP} for details).
Using this compatibility, we can rewrite \eqref{MP-uni-eq} as
\[
  \operatorname{MP}\colon \Hom(\mathcal O_C,\omega_C)\otimes
  \Ext^1(\omega_C,L)\otimes \Hom(L,\omega_C\otimes L)\to\Hom(\mathcal
  O_C,\omega_C\otimes L).
\]
Let $V(\beta)$ be an extension
\[
  0\to L\to V(\beta)\to \omega_C\to 0
\]
corresponding to $\beta\in\Ext^1(\omega_C,L)$. Then 
\begin{equation}\label{m3-MP-formula-eq}
 m_3(\alpha_1,\beta,\alpha_2)=\operatorname{MP}(\alpha_1,\beta,\alpha_2)(1)=\tilde\alpha_2\circ\tilde\alpha_1(1)
\end{equation}
where $\tilde\alpha_1$, $\tilde\alpha_2$ are the lifts of $\alpha_1,\alpha_2$
in the diagram
\begin{equation}\label{MP-curve-diagram}
  \begin{tikzcd}[column sep=small]
    \mathcal O_C
    \arrow[rr,"\alpha_1"]\arrow[rrrd,dashed,"\tilde\alpha_1" below]&&
    \omega_C \arrow[rr,"{[1]}"] & &
    L\arrow[rr,"\alpha_2"]\arrow{dl}& &\omega_C\otimes L\\
    & & &V(\beta)\arrow[ul]\arrow[urrr,dashed,"\tilde\alpha_2" below]
    & & &
  \end{tikzcd}
\end{equation}
in the derived category of coherent sheaves on $C$.

\subsection{Coordinates on $\Ext^1(\omega_C,L)$}
We are going introduce coordinates on the $(2g-2)$-dimen\-sional vector space
$\Ext^1(\omega_C,L)=H^1(C,L^{-1})$  (they will depend on a choice of a
generic global differential on $C$).

\begin{lemma}\label{theta-char-duality-lem}
  Let $L$ be a theta-characteristic on $C$ without non-trivial
  sections.  Let $\alpha\in H^0(C,\omega_C)$ be a nonzero differential
  with simple zeros $P_1,\dots, P_{2g-2}$.  Then the
  evaluation map
  \[
    \varphi_\alpha\colon H^0(C,L^3)\to \bigoplus_{i=1}^{2g-2}L^3|_{P_i}
  \]
  at the zeros of $\alpha$ is an
  isomorphism. Dually, we have an isomorphism
  \[
   \varphi_\alpha^\vee\colon\bigoplus_{i=1}^{2g-2}L|_{P_i}\to H^1(C,L^{-1}).
  \]
  The perfect pairing
  $$(\bigoplus_iL^3|_{P_i})\otimes(\bigoplus_iL|_{P_i})\to \mathbb C$$
  induced by the Serre duality is
  $$t\otimes s\mapsto \sum_{i} t_is_i/\alpha'(P_i)$$ 
  where
  $\alpha'(P_i)\in (\omega_C)^2|_{P_i}$ is the derivative of $\alpha$
  at the zero $P_i$.
\end{lemma}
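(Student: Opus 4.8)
The plan is to derive everything from the two short exact sequences obtained by multiplication by $\alpha$, together with the vanishing $H^0(C,L)=H^1(C,L)=0$ (the latter follows from the former by Serre duality, since $\om_C\ot L^{-1}\simeq L$ gives $H^1(C,L)\simeq H^0(C,L)^\vee$). First I would record the numerology: as $L^{\ot 2}\simeq \om_C$ we have $\deg L=g-1$, so $\deg L^3=3g-3$, and since $H^1(C,L^3)\simeq H^0(C,\om_C\ot L^{-3})^\vee=H^0(C,L^{-1})^\vee=0$ (negative degree), Riemann--Roch gives $h^0(L^3)=2g-2$. Thus the source $H^0(L^3)$ and the target $\bigoplus_i L^3_{P_i}$ of $\varphi_\alpha$ both have dimension $2g-2$.

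For part (i), since $\alpha$ cuts out the reduced effective divisor $D=\sum_i P_i$ with $\OO_C(D)\simeq\om_C\simeq L^{\ot 2}$, multiplication by $\alpha$ gives the exact sequence
$$0\to L\xrightarrow{\cdot\alpha} L^3\to \bigoplus_i L^3_{P_i}\to 0,$$
in which $\varphi_\alpha$ is the evaluation map identified with the quotient map (the simplicity of the zeros identifies the cokernel with $\bigoplus_i L^3_{P_i}$). The associated long exact sequence together with $H^0(L)=H^1(L)=0$ shows at once that $\varphi_\alpha$ is an isomorphism. For the dual statement I would use the companion sequence
$$0\to L^{-1}\xrightarrow{\cdot\alpha} L\to \bigoplus_i L_{P_i}\to 0,$$
whose connecting homomorphism $\bigoplus_i L_{P_i}\to H^1(C,L^{-1})$ is an isomorphism (again $H^0(L)=H^1(L)=0$, and $H^0(L^{-1})=0$); the residue computation below identifies it with $\varphi_\alpha^\vee$.

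The substance of the lemma is the pairing formula in part (iii), which I would verify by an explicit \v Cech/residue computation. Cover $C$ by $U_0=C\smallsetminus\{P_i\}$ and small disks $U_i$ about each $P_i$. Given $s=(s_i)\in\bigoplus_i L_{P_i}$, lift $s_i$ to $\sigma_i\in L(U_i)$ and take the zero lift on $U_0$; then $\sigma_i/\alpha\in L^{-1}(U_0\cap U_i)$ is a \v Cech $1$-cocycle representing $\varphi_\alpha^\vee(s)$. For $t\in H^0(L^3)$, the Serre pairing $\langle t,\varphi_\alpha^\vee(s)\rangle$ is the trace of the cup product $t\cup\varphi_\alpha^\vee(s)\in H^1(\om_C)$, represented by the cocycle $t\,\sigma_i/\alpha\in\om_C(U_0\cap U_i)$; normalizing the trace as $\sum_i\Res_{P_i}$, this equals $\sum_i\Res_{P_i}(t\sigma_i/\alpha)$. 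Since $\alpha$ has a simple zero at $P_i$, the form $t\sigma_i/\alpha$ has at most a simple pole there, and in a local coordinate $z$ with $\alpha=(c_1 z+\cdots)\,dz$ one has $\alpha'(P_i)=c_1\,(dz)^2|_{P_i}$, so the residue is the ratio of $(t\sigma_i)(P_i)=t(P_i)\,s_i\in L^4_{P_i}=(\om_C^{\ot 2})_{P_i}$ to $\alpha'(P_i)\in(\om_C^{\ot 2})_{P_i}$. Summing yields $\sum_i t(P_i)s_i/\alpha'(P_i)$, the asserted formula, which simultaneously identifies $\varphi_\alpha^\vee$ as the Serre-transpose of $\varphi_\alpha$.

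The routine points are the degree count and the two long exact sequences; the one place demanding care is the residue computation of the last paragraph. Specifically, I would pin down the coordinate-free meaning of $\alpha'(P_i)\in(\om_C^{\ot 2})_{P_i}$ as the image of $\alpha$ under $H^0(\om_C)\to \om_C\ot(\fm_{P_i}/\fm_{P_i}^2)\simeq(\om_C^{\ot 2})_{P_i}$, and check that the normalization of the Serre trace as a sum of residues produces the stated coefficient with the correct sign. The hard part is therefore the bookkeeping of these conventions rather than any genuine difficulty.
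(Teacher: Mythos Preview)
Your proposal is correct and follows essentially the same approach as the paper: both use the two short exact sequences $0\to L\xrightarrow{\alpha}L^3\to\bigoplus_i L^3_{P_i}\to 0$ and $0\to L^{-1}\xrightarrow{\alpha}L\to\bigoplus_i L_{P_i}\to 0$ together with $H^0(L)=H^1(L)=0$ to obtain the isomorphisms, and then verify the pairing formula via residues. The only minor difference is that the paper factors the boundary map through $\bigoplus_i L^{-1}(P_i)|_{P_i}\xrightarrow{\alpha'(P_i)}\bigoplus_i L_{P_i}$ and invokes the ``well known'' compatibility of the standard coboundary map with Serre duality, whereas you carry out the \v Cech residue computation directly; your version is slightly more self-contained but otherwise equivalent.
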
  

\begin{proof}
  The map $\varphi_\alpha$ appears in the long exact
  sequence associated with
  \[
    0\to L\stackrel{\alpha}\to L^3\to \bigoplus_i L^3|_{P_i}\to 0.
  \]
Since  $H^0(C,L)=H^1(C,L)=0$, $\varphi_\alpha$ is an
  isomorphism. Similarly, $\varphi^\vee_\alpha$ comes from boundary homomorphism
  associated with the exact
  sequence
  \begin{equation}\label{varphi-vee-extension-seq}
    0\to L^{-1}\stackrel{\alpha}\to L\to \bigoplus_i L|_{P_i}\to 0.
  \end{equation}
 Note that we have an isomorphism 
$$\bigoplus_i L^{-1}(P_i)|_{P_i}\rTo{\a'(P_i)} \bigoplus_i L|_{P_i},$$
whose composition with $\varphi^\vee_\alpha$ is the standard coboundary map
$$\varphi^\vee:\bigoplus_i L^{-1}(P_i)|_{P_i}\to H^1(C,L^{-1}).$$
It is well known that the Serre duality pairing of $\varphi^\vee(x)$ with $y\in H^0(C,L\om_C)$
is equal to the natural pairing of $x$ with $(y|_{P_i})$ (that uses trivializations of $\om_C(P_i)|_{P_i}$).
This implies our assertion.
\end{proof}

\subsection{Formula for the Massey product}


\begin{prop} \label{p-02} Fix a differential $\alpha\in H^0(C,\omega_C)$
  with a simple divisor of zeros $D=P_1+\ldots+P_{2g-2}$.
For $y\in \bigoplus_{i=1}^{2g-2} L|_{P_i}$, consider
  $\beta=\varphi^\vee_\alpha(y)\in\Ext^1(\omega,L)\cong
  H^1(C,L^{-1})$. Then
  \[
    m_3(\alpha_1,\beta,\alpha_2)=\frac{\varphi^{-1}_\alpha(y\cdot\alpha_1|_D)\alpha_2-\varphi^{-1}_\alpha(y\cdot\alpha_2|_D)\alpha_1}{\alpha}.
  \]
\end{prop}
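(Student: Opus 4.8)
The plan is to evaluate the derived-category expression \eqref{m3-MP-formula-eq}, namely $m_3(\alpha_1,\beta,\alpha_2)=\tilde\alpha_2\circ\tilde\alpha_1(1)$, by realizing the extension $V(\beta)$ and the two lifts of diagram \eqref{MP-curve-diagram} completely explicitly through \v{C}ech cocycles. I would cover $C$ by $U_1=C\setminus D$ and a neighborhood $U_0$ of the reduced divisor $D=P_1+\dots+P_{2g-2}$, and pick a lift $\tilde y\in L(U_0)$ of $y$, so that $\tilde y|_{P_i}=y_i$. By the description in Lemma \ref{theta-char-duality-lem} of $\varphi^\vee_\alpha$ as the connecting homomorphism of $0\to L^{-1}\xrightarrow{\alpha}L\to\bigoplus_iL_{P_i}\to 0$, the class $\beta=\varphi^\vee_\alpha(y)\in H^1(C,L^{-1})=\Ext^1(\omega_C,L)$ is represented by the $1$-cocycle $\tilde y/\alpha\in L^{-1}(U_0\cap U_1)$. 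Hence $V(\beta)$ is glued from $L\oplus\omega_C$ over $U_0$ and $U_1$ by the transition $(s_1,\eta)\mapsto(s_1-(\tilde y/\alpha)\eta,\eta)$ on $U_0\cap U_1$, where $(\tilde y/\alpha)\eta\in L$ since $\omega_C\cong L^{2}$ (the sign of the cocycle records the convention fixing the overall sign below).

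The point that makes the lifts both exist and be unique is the vanishing $H^0(C,L)=H^1(C,L)=0$. Applying $H^0(-)$ and $\Hom(-,\omega_C\otimes L)$ to $0\to L\to V(\beta)\to\omega_C\to 0$ and using this vanishing yields $H^0(V(\beta))\cong H^0(\omega_C)$ and $\Hom(V(\beta),\omega_C\otimes L)\cong\Hom(L,\omega_C\otimes L)=H^0(\omega_C)$, so $\tilde\alpha_1(1)\in H^0(V(\beta))$ and $\tilde\alpha_2$ are unique. To write them down I would invoke the two sections
$$h=\varphi_\alpha^{-1}(y\cdot\alpha_1|_D),\qquad h'=\varphi_\alpha^{-1}(y\cdot\alpha_2|_D)\in H^0(C,L^3)$$
supplied by Lemma \ref{theta-char-duality-lem}; by construction $h|_{P_i}=y_i\,\alpha_1|_{P_i}$ and $h'|_{P_i}=y_i\,\alpha_2|_{P_i}$, so $h/\alpha$ and $\tilde y\alpha_1/\alpha$ (respectively $h'/\alpha$ and $\alpha_2\tilde y/\alpha$) have matching simple poles along $D$. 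This pole-matching is exactly what splits the relevant \v{C}ech cocycle. Concretely, $\tilde\alpha_1(1)$ is $(h/\alpha,\alpha_1)$ on $U_1$ and $((h-\tilde y\alpha_1)/\alpha,\alpha_1)$ on $U_0$, both regular, with difference $-(\tilde y/\alpha)\alpha_1$ on the overlap as demanded by the gluing; and $\tilde\alpha_2(s,\eta)=\alpha_2 s+\psi\,\eta$ with $\psi=-h'/\alpha$ on $U_1$ and $\psi=(\alpha_2\tilde y-h')/\alpha$ on $U_0$.

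Composing, $\tilde\alpha_2(\tilde\alpha_1(1))$ computed on $U_1$ equals $\alpha_2(h/\alpha)-\alpha_1(h'/\alpha)=(h\alpha_2-h'\alpha_1)/\alpha$, and the identical value on $U_0$ confirms this is a global section of $\omega_C\otimes L=L^3$. Its regularity is automatic, since at each $P_i$ the numerator is $y_i\alpha_1|_{P_i}\alpha_2|_{P_i}-y_i\alpha_2|_{P_i}\alpha_1|_{P_i}=0$. Substituting back the definitions of $h,h'$ gives precisely the asserted formula
$$m_3(\alpha_1,\beta,\alpha_2)=\frac{\varphi^{-1}_\alpha(y\cdot\alpha_1|_D)\,\alpha_2-\varphi^{-1}_\alpha(y\cdot\alpha_2|_D)\,\alpha_1}{\alpha}.$$

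The main obstacle is purely the bookkeeping: checking that each explicit local section is regular on the correct chart and that the two charts glue (which is exactly where the pole-matching through $\varphi_\alpha$ from Lemma \ref{theta-char-duality-lem} is indispensable), and fixing the overall sign. The latter is dictated by the sign convention in the connecting map defining $\varphi^\vee_\alpha$, equivalently by the sign of the cocycle $\tilde y/\alpha$ in the gluing of $V(\beta)$; with the convention recorded above one lands on the stated sign. I expect no conceptual difficulty beyond this, since the vanishing $H^*(C,L)=0$ removes all ambiguity in the lifts and trivializes the potential Massey-product indeterminacy.
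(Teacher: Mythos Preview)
Your proposal is correct and follows essentially the same route as the paper: construct $V(\beta)$ explicitly, use $H^*(C,L)=0$ to get unique lifts $\tilde\alpha_1,\tilde\alpha_2$, and compose. The only cosmetic difference is that the paper realizes $V(\beta)$ globally as the subsheaf of $L^3\oplus\omega_C$ consisting of pairs $(t,\sigma)$ with $t(P_i)=y_i\sigma(P_i)$, which sidesteps the chart-by-chart gluing; your sections $h,h'$ are exactly the paper's $t,s$, and the composition yields the identical expression $(t\alpha_2-s\alpha_1)/\alpha$.
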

\begin{proof}
  We first construct an extension $V(\beta)$ with the extension class
  $\beta=\varphi^\vee_\alpha(y)\in\Ext^1(\omega,L)$ in terms of the
  coordinates $y_i\in L|_{P_i}$.
  
  \begin{lemma}\label{l-02}
    Let $\beta=\varphi^\vee_{\alpha}(y)\in \Ext^1(\omega_C,L)$.  Let $V(\beta)$ be the
    subsheaf of $L^3\oplus \omega_C$ consisting of sections
    $(t,\sigma)$ such that $t(P_i)=y_i\sigma(P_i)$. Then
    \[
      0\to L\stackrel{(\alpha,0)}\longrightarrow
      V(\beta)\stackrel{p_2}\longrightarrow \omega_C\to 0,
    \]
    with $p_2(t,\sigma)=\sigma$, is an extension whose class is
    $\beta$.
  \end{lemma}
  
  \begin{proof} We have $V(\beta)=\tilde V(\beta)\otimes\omega_C$
    where $\tilde V(\beta)\subset L\oplus\mathcal O_C$ is the
    subsheaf of sections $(\lambda,f)$ such that
    $\lambda(P_i)=y_if(P_i)$ for all $i=1,\dots,2g-2$.  We have to
    show that
    \[
      0\to L^{-1}\stackrel{(\alpha,0)}\longrightarrow \tilde
      V(\beta)\stackrel{p_2}\longrightarrow \mathcal O_C\to 0
    \]
    is an extension with class $\beta\in H^1(C,L^{-1})$. For this we recall that $\beta$ as the image
    of $y$ under the coboundary map, which means that the corresponding extension of $\mathcal O_C$ by $L^{-1}$ is the pullback
    of the extension \eqref{varphi-vee-extension-seq} under the map $y:\mathcal O\to \bigoplus_i L|_{P_i}$. This immediately gives the result.
  \end{proof}
  
  We can now construct the lifts $\tilde\alpha_1,\tilde\alpha_2$ in diagram \eqref{MP-curve-diagram}: 
  $\tilde \alpha_1$ is of the form
  $$1\mapsto (t,\alpha_1)$$ 
  where $t\in H^0(C,L^3)$
  is determined by the condition that  $(t,\alpha_1)\in V(\beta)$,
  i.e., 
  $$t(P_i)=y_i\alpha_1(P_i), \ \ i=1,\ldots,2g-2,$$ or equivalently,
   $\varphi_\a(t)=y\cdot \a_1|_D$.
  The condition that
  $\tilde\alpha_2$ is a lift of $\alpha_2$ implies that
  $\tilde\alpha_2$ has the form
  \[
    (u,\sigma)\mapsto (u\alpha_2-s\sigma)\alpha^{-1}
  \]
  for some $s\in H^0(C,L^3)$ determined by the condition that this map
  is regular at $P_i$. The latter condition is equivalent to
  \[
    u(P_i)\alpha_2(P_i)=s(P_i)\sigma(P_i), \quad i=1,\dots,2g-2.
  \]
  Since $u(P_i)=y_i\sigma(P_i)$, we can take
  $s(P_i)=y_i\alpha_2(P_i)$.  Thus,
  $$\tilde\alpha_2\circ\tilde\alpha_1(1)=(t\alpha_2-s\alpha_1)/\alpha,$$
  where $s\in H^0(C,L^3)$ is defined by the condition
  $$s(P_i)=y_i\alpha_2(P_i), \ \  i=1,\dots,2g-2.$$ Equivalently,
  $\varphi_\a(s)=y\cdot \a_2|_D$. Thus, using \eqref{m3-MP-formula-eq} we get the required formula for 
  $m_3(\a_1,\b,\a_2)$.
\end{proof}


\begin{cor}\label{main-MP-cor}
For $\sum_i \alpha_i\otimes \alpha'_i\in K\subset H^0(\omega_C)^{\otimes 2}$ and $\beta=\varphi^\vee(y)\in H^1(L^{-1})$,
one has
$$\sum_i m_3(\beta,\alpha_i,\alpha'_i)=-\frac{\sum_i \varphi^{-1}_\alpha(y\cdot\alpha_i|_D)\cdot \alpha'_i}{\alpha}\in H^0(\omega_C\otimes L).$$
\end{cor}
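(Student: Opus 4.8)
The plan is to reduce the statement to the formula of Proposition~\ref{p-02} by means of the $A_\infty$-identity \eqref{two-m3-eq}, the decisive simplification being that the fixed differential $\alpha$ vanishes on its own divisor of zeros $D$.

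I would begin by invoking \eqref{two-m3-eq} with $x=\beta$ and with the free differential chosen to be the fixed $\alpha$ defining $D$ and the isomorphism $\varphi_\alpha$ of Lemma~\ref{theta-char-duality-lem}. Since $\sum_i\alpha_i\otimes\alpha_i'\in K$, the identity becomes
$$\alpha\cdot\sum_i m_3(\beta,\alpha_i,\alpha_i')=\sum_i m_3(\alpha,\beta,\alpha_i)\cdot\alpha_i',$$
so that everything is reduced to evaluating $m_3(\alpha,\beta,\alpha_i)$, a quantity covered by Proposition~\ref{p-02} with first argument $\alpha$ and third argument $\alpha_i$.

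The key point---and essentially the whole content of the argument---is that in the formula of Proposition~\ref{p-02} the term involving $\alpha|_D$ drops out: since $\alpha$ vanishes at each $P_i\in D$ we have $\alpha|_D=0$, hence $\varphi_\alpha^{-1}(y\cdot\alpha|_D)=0$. Therefore
$$m_3(\alpha,\beta,\alpha_i)=\frac{-\varphi_\alpha^{-1}(y\cdot\alpha_i|_D)\cdot\alpha}{\alpha}=-\varphi_\alpha^{-1}(y\cdot\alpha_i|_D),$$
the division being exact because $\alpha$ occurs as a literal factor in the numerator, so the result is a genuine section of $H^0(\omega_C\otimes L)$. Substituting this back yields
$$\alpha\cdot\sum_i m_3(\beta,\alpha_i,\alpha_i')=-\sum_i\varphi_\alpha^{-1}(y\cdot\alpha_i|_D)\cdot\alpha_i'.$$

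It remains only to divide by $\alpha$. This step is legitimate because the left-hand side exhibits the right-hand side as $\alpha$ times the regular section $\sum_i m_3(\beta,\alpha_i,\alpha_i')\in H^0(\omega_C\otimes L)$; equivalently, multiplication by the fixed nonzero section $\alpha$ is injective on global sections, a nonzero section of a line bundle being a non-zero-divisor. Dividing gives exactly the claimed identity. I do not expect any genuine obstacle: the entire argument is the cancellation forced by $\alpha|_D=0$ together with the correct bookkeeping of the two orderings of the arguments of $m_3$, and the only place demanding a word of care is the justification of the final division by $\alpha$.
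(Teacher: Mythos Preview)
Your argument is correct and follows essentially the same route as the paper: both proofs combine the $A_\infty$-identity \eqref{two-m3-eq} with the explicit formula of Proposition~\ref{p-02}. The only difference is a cosmetic one in how the unwanted term is killed. The paper keeps the auxiliary differential $\alpha'$ in \eqref{two-m3-eq} generic, multiplies through by $\alpha$, and then uses the relation $\sum_i\alpha_i\alpha_i'=0$ coming from $K$ to remove the term $\sum_i\varphi_\alpha^{-1}(y\cdot\alpha'|_D)\,\alpha_i\alpha_i'$, after which $\alpha'$ cancels from both sides. You instead specialise $\alpha'=\alpha$ at the outset and observe that $\alpha|_D=0$ forces $\varphi_\alpha^{-1}(y\cdot\alpha|_D)=0$, so the same term dies for a different reason. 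Your variant is marginally more direct, since it avoids introducing and then canceling an auxiliary $\alpha'$; the paper's variant makes the role of the hypothesis $\sum_i\alpha_i\otimes\alpha_i'\in K$ more visible (in your argument it is used only implicitly, through the validity of \eqref{two-m3-eq}).
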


\begin{proof} Combining \eqref{two-m3-eq} with the formula of Proposition \ref{p-02} we get
\begin{align*}
&\alpha'\cdot\sum_i m_3(\beta,\alpha_i,\alpha'_i)\cdot \alpha=\sum_i m_3(\alpha',\beta,\alpha_i)\cdot \alpha'_i\cdot \alpha=\\
&\sum_i \varphi^{-1}_\alpha(y\cdot\alpha'|_D)\alpha_i\alpha'_i-\sum_i \varphi^{-1}_\alpha(y\cdot\alpha_i|_D)\alpha'_i\alpha'=-\sum_i 
\varphi^{-1}_\alpha(y\cdot\alpha_i|_D)\alpha'_i\cdot \a'.
\end{align*}
\end{proof}

\subsection{Computation for hyperelliptic curves}\label{hyperell-comp-sec}

Assume now that $C$ is hyperelliptic with the double covering map $f:C\to {\mathbb P}^1$ ramified at the points
$p_0,\ldots,p_{2g+1}\in C$. 
We choose an even theta-characteristic on $C$ to be
$$L:=\OO(p_1+\ldots+p_g-p_0).$$
Note that $h^0(C,\OO(p_1+\ldots+p_g))=1$ (see \cite[Lem.\ 2.6.2]{FP}), hence $h^0(C,L)=0$. 

Let us choose a coordinate $t$ on ${\mathbb P}^1$ such that $t(p_0)=\infty$, and set $a_i=t(f(p_i))$.
Then we have a natural identification
$$\OO_C(-4p_0)\rTo{dt} f^*\omega_{\P^1}.$$
The affine curve $C\setminus\{p_0\}$ can be identified with the double cover of $\A^1$ given by 
$$x^2=\prod_{i=1}^{2g+1} (t-a_i).$$
Note that $x$, viewed as a rational function on $C$ has simple zeros at $p_1,\ldots,p_{2g+1}$ and a pole of order $2g+1$ at $p_0$. 

Let $\tau:C\to C$ be the hyperelliptic involution, so that $\tau^*(t)=t$, $\tau^*(x)=-x$.
Then we have an isomorphism of $\Z/2$-equivariant line bundles
$$\omega_C\simeq f^*\omega_{\P^1}(\sum_{i=0}^{2g+1} p_i)\rTo{x/dt} \OO_C((2g-2)p_0)\otimes\chi,$$
where $\chi$ is a nontrivial character of $\Z/2$.
The $\Z/2$-equivariant isomorphism of $L^2$ with $\omega_C\otimes\chi$ comes from this identification and 
from the isomorphism
$$L^{\ot 2}\simeq \OO_C(2p_1+\ldots+2p_g-2p_0)\rTo{F}\OO_C((2g-2)p_0),$$
where $F=(t-a_1)\ldots(t-a_g)$.

Now let us consider a section 
$$G=\prod_{j=1}^{g-1}(t-b_j)$$ 
of $\OO_C((2g-2)p_0)$, where $b_j\in\A^1$ are distinct and disjoint from $(a_i)$, and set
$$\alpha=\frac{G dt}{x}\in H^0(C,\omega_C).$$
Note that $\tau^*$ acts on $H^0(C,\omega_C)$ as $-1$. 
Let $D=\cup f^{-1}(b_j)$ be the divisor of zeros of $\alpha$. For each $j=1,\ldots,g-1$, we pick a point $q_j^+\in f^{-1}(b_j)$,
so that $D=\sum (q_j^+ + q_j^-)$, where $q_j^-=\tau(q_j^+)$.

Recall that the restriction map $\varphi_\alpha:H^0(\omega_C\otimes L)\to H^0((\omega_C\otimes L)|_D)$ is an isomorphism,
and by Serre duality, the dual map gives an isomorphism 
$\varphi^\vee_\alpha:H^0(L|_D)\to H^1(L^{-1})$.
We will use natural identifications 
$$L|_D\simeq \OO_D, \ \ \OO_D \rTo{dt/x} \omega_C|_D\otimes\chi.$$
Thus, we view $\varphi_\alpha$ as a map
$$H^0(\omega_C\otimes L)\rTo{\sim} H^0(\OO_D)\otimes\chi: \beta\mapsto \frac{\beta}{dt}\cdot x|_D.$$

Recall (see Lemma \ref{theta-char-duality-lem})
that the duality between $H^0(L|_D)$ and $H^0((\omega_C\otimes L)|_D)$ is induced by
the product, the identification of $L^{\ot 2}$ with $\omega_C$ and by the residue map
$$\omega_C^{\ot 2}|_D\to k: \gamma\mapsto \sum\Res(\frac{\gamma}{\alpha}).$$
Using the above trivializations we can identify the pairing between $H^0(L|_D)$ and $H^0((\omega_C\otimes L)|_D)$
with the composition
$$H^0(\OO_D)\otimes (H^0(\OO_D)\ot\chi) \rTo{\frac{dt}{x}} H^0(L^{\ot 2}\ot \omega_C|_D)\rTo{\frac{F}{x}dt}
H^0(\omega_C^{\ot 2}|_D)\rTo{\sum\Res \frac{?\cdot x}{Gdt}} k.$$
Thus, this pairing is given by
\begin{equation}\label{y-y'-pairing}
\langle y,y'\rangle=\sum_j \frac{F(b_j)}{x_jG'(b_j)}\bigl(y(q_j^+)y'(q_j^+)-y(q_j^-)y'(q_j^-)\bigr),
\end{equation}
where $x_j=x(q_j^+)$.

Set $V=H^0(\OO_D)$. We have a decomposition $V=V^+\oplus V^-$ into the eigenspaces with respect to $\Z/2$-action,
so that $V^\pm$ is spanned by 
$$e_j^\pm:=\de(q_j^+)\pm \de(q_j^-),$$
where $\de(q_j^\pm)\in H^0(\OO_D)$ takes value $1$ at $q_j^\pm$ and $0$ at all other points of $D$.
Note that $V^+$ and $V^-$ are isotropic with respect to $\langle ?,?\rangle$ and
$$\langle e_j^+,e_{j'}^-\rangle=\de_{j,j'}\cdot \frac{2F(b_j)}{x_jG'(b_j)}.$$
We have a commutative diagram
\begin{diagram}
H^0(\omega_C\ot L)&\rTo{\varphi_\alpha}& H^0(\omega_C\ot L|_D)\\
\dTo{x/dt}&&\dTo{x/dt} \\
H^0(\OO_C(p_1+\ldots+p_g+(2g-3)p_0))\ot\chi&\rTo{}& V\ot\chi
\end{diagram}
where the bottom arrow is the natural restriction map.
Let $G_j$, $i=j,\ldots,g-1$, be polynomials of degree $g-2$ in $t$ such that
$G_j(b_{j'})=\de_{j,j'}$. Then we have the following basis of $H^0(\OO_C(p_1+\ldots+p_g+(2g-3)p_0))$:
$$(G_j), j=1,\ldots,g-1, \ \ (\frac{G_j x}{F}), j=1,\ldots,g-1,$$
where the first $g-1$ elements are symmetric with respect to $\tau$, and the last $g-1$ elements are antisymmetric. 
Thus, we have
$$\varphi_\alpha(G_j\frac{dt}{x})=e_j^+\cdot (\frac{dt}{dx}|_D),$$
$$\varphi_\alpha(\frac{G_jx}{F}\cdot\frac{dt}{x})=\frac{x_j}{F(b_j)}e_j^-\cdot (\frac{dt}{dx}|_D).$$

The basis of $H^0(\om_C)$ is given by $t^m\frac{dt}{x}$, $m=0,\ldots,g-1$.
Now we can apply the formula of Corollary \ref{main-MP-cor} to some element 
$$\xi=\sum_i f_i(t)\ot g_i(t) \frac{dt^{\ot 2}}{x^2}\in K\sub H^0(\om_C)^{\ot 2},$$
where $\sum_i f_ig_i=0$.
We have
$$-m_3(\varphi^\vee_\alpha(e_j^\pm)\ot \xi)=
\frac{\sum_i \varphi^{-1}_\alpha(e_j^\pm f_i(b_j) dt/x)g_i(t)}{G}.$$
Hence,
$$-m_3(\varphi^\vee_\alpha(e_j^+)\ot \xi)=
\frac{\sum_i f_i(b_j)g_iG_j}{G}\cdot \frac{dt}{x}=\frac{\sum_i f_i(b_j)g_i}{G'(b_j)(t-b_j)}\cdot \frac{dt}{x},$$
$$-m_3(\varphi^\vee_\alpha(e_j^-)\ot \xi)=
\frac{F(b_j)\sum_i f_i(b_j)g_iG_jx}{x_jFG}\cdot \frac{dt}{x}=\frac{F(b_j)\sum_i f_i(b_j)g_ix}{x_jG'(b_j)(t-b_j)F}\cdot \frac{dt}{x},$$
where we used the identity $(t-b_j)G_j=G/G'(b_j)$.

For applications to the second variation of the superperiod map,
we need to compute the skew-symmetrization of the corresponding maps
$$m_3(?\ot \xi):H^1(L^{-1})\to H^0(\omega_C\otimes L).$$
Equivalently, we have to skew-symmetrize the maps
$$A_\xi=\varphi_\alpha m_3(\varphi^\vee_\alpha(?)\ot \xi):V\to V\ot\chi$$
with respect to the pairing \eqref{y-y'-pairing}.

Restricting the right-hand sides of the formulas above to $D$, we get
$$-A_\xi(e_j^+)=\frac{\sum_if_i(b_j)g'_i(b_j)}{G'(b_j)}e_j^+ 
+\sum_{j'\neq j}\frac{\sum_i f_i(b_j)g_i(b_{j'})}{G'(b_j)(b_{j'}-b_j)}e_{j'}^+,
$$
$$-A_\xi(e_j^-)=\frac{\sum_if_i(b_j)g'_i(b_j)}{G'(b_j)}e_j^- 
+\sum_{j'\neq j}\frac{x_{j'}F(b_j)\sum_i f_i(b_j)g_i(b_{j'})}{x_jF(b_{j'})G'(b_j)(b_{j'}-b_j)}e_{j'}^-,
$$

Thus, 
$$-\langle e_k^+,A_\xi^*(e_j^-)\rangle=-\langle A_\xi(e_k^+),e_j^-\rangle=
\begin{cases} \frac{2F(b_j)\sum_i f_i(b_j)g'_i(b_j)}{x_jG'(b_j)^2} & j=k\\ 
\frac{2F(b_j)\sum_i f_i(b_k)g_i(b_j)}{x_jG'(b_j)G'(b_k)(b_{j}-b_k)} & j\neq k \end{cases},$$
and so
$$-A_\xi^*(e_j^-)=\frac{\sum_i f_i(b_j)g'_i(b_j)}{G'(b_j)} e_j^- +
\sum_{k\neq j} \frac{x_kF(b_j)\sum_i f_i(b_k)g_i(b_j)}{x_jF(b_k)G'(b_j)(b_{j}-b_k)} e_k^-,$$
\begin{equation}\label{skew-sym-A-eq}
(A_\xi-A_\xi^*)(e_j^-)=
\sum_{k\neq j} \frac{x_kF(b_j)}{x_jF(b_k)G'(b_j)(b_j-b_k)}\cdot
\bigl(\sum_i f_i(b_j)g_i(b_k)+g_i(b_j)f_i(b_k)\bigr) e_k^-.
\end{equation}

Note that the restriction of $A_\xi-A_\xi^*$ to $V^+$ is determined from its restriction to $V^-$,
due to duality between $V^+$ and $V^-$. 






\section{Application to super-Schottky ideal}\label{s-8}

\subsection{Estimate on generic rank of the second variation}


First, let $C$ be a hyperelliptic curve. The results of \cite{PR} imply that certain quadratic
relations between differentials on $C$ can be deformed away from the hyperelliptic locus.
We keep the notations of the previous section. 

\begin{prop}\label{def-qu-rel-prop}
For any $(g-3)$-tuple of degree $g-3$ polynomials $(H_1(t),\ldots,H_{g-3}(t))$, the quadratic relation in $S^2H^0(C,\omega_C)$,
\begin{equation}\label{deformable-qu-rel}
\xi=\sum_{i=1}^{g-3}\bigl(H_i\frac{dt}{x}\ot H_it^2\frac{dt}{x}+H_it^2\frac{dt}{x}\ot H_i\frac{dt}{x} - 
2H_it\frac{dt}{x}\ot H_it\frac{dt}{x}\bigr)
\end{equation}
can be deformed away from the hyperelliptic locus.
\end{prop}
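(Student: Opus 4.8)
The plan is to realize $\xi$ as the value at $[C,L]$ of a section of the kernel of the multiplication map that survives to nearby non-hyperelliptic curves, the deformability being supplied by the results of \cite{PR}.

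First I would record that $\xi$ is a genuine relation, i.e. that it lies in
$$K:=\ker\bigl(S^2H^0(C,\om_C)\to H^0(C,\om_C^{\ot 2})\bigr).$$
Writing $u_i=H_i\tfrac{dt}{x}$, $v_i=H_it\tfrac{dt}{x}$ and $w_i=H_it^2\tfrac{dt}{x}$, each of these is a holomorphic differential since $\deg H_i+2=g-1$, and the $i$-th summand of \eqref{deformable-qu-rel} is $u_i\ot w_i+w_i\ot u_i-2\,v_i\ot v_i$. Its image under multiplication is $2u_iw_i-2v_i^2=2H_i^2t^2\tfrac{dt^{\ot 2}}{x^2}-2H_i^2t^2\tfrac{dt^{\ot 2}}{x^2}=0$, so indeed $\xi\in K$; this is the whole reason for the choice of coefficients $(1,1,-2)$.

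Next I would supply the geometric interpretation that makes $\xi$ amenable to \cite{PR}. Dividing the basis $t^m\tfrac{dt}{x}$ by $\tfrac{dt}{x}$ identifies the canonical map of $C$ with the composition of $f\colon C\to\P^1$ with the degree $g-1$ Veronese embedding $\P^1\hra\P^{g-1}$, whose image is the rational normal curve $R_{g-1}$; hence $K$ is precisely the space of quadrics through $R_{g-1}$, spanned by the $2\times2$ minors of the catalecticant. Each summand $u_i\ot w_i+w_i\ot u_i-2v_i\ot v_i$ is the pullback of the conic relation attached to the common factor $H_i$, and all such relations are invariant under the hyperelliptic involution $\tau$. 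The deformability question is then whether $\xi$ lies in the $\binom{g-2}{2}$-dimensional subspace of $K$ which is the conormal space to the Jacobian locus $\mathcal J_g$ at $[C]$ — equivalently, the flat limit from generic curves of $\ker$ of multiplication — rather than among the $g-2$ extra quadrics special to the hyperelliptic locus (the jump $\binom{g-1}{2}-\binom{g-2}{2}=g-2$ being exactly the failure of surjectivity of multiplication for hyperelliptic $C$). I would phrase this through the first-order criterion: $\xi$ deforms to first order along $v\in H^1(C,T_C)$ iff the intrinsic derivative $\partial_v m(\xi)$ of the multiplication map $m$ vanishes in $\coker(m)\cong H^0(C,\om_C^{\ot 2})^-$, the $\tau$-anti-invariant quadratic differentials. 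The role of \cite{PR} is then to produce, for an arbitrary tuple $(H_i)$, a transverse ($\tau$-anti-invariant) deformation direction along which this obstruction vanishes, and to promote first-order deformability to an actual deformation off the hyperelliptic locus.

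The main obstacle I expect is precisely this last step: verifying that the specific symmetric combination $\sum_i\bigl(u_i\ot w_i+w_i\ot u_i-2v_i\ot v_i\bigr)$, with its common-factor structure, pairs to zero against the transverse directions, and that the deformation is in fact unobstructed. I would carry this out by computing $\partial_v m$ on the summands via the Gauss--Manin connection in the explicit \v{C}ech model of Section~\ref{s-6}, exploiting the pointwise cancellation $u_iw_i=v_i^2$ together with $\tau$-equivariance to kill the anti-invariant part of the obstruction; the summation over the $g-3$ factors $H_i$ is what makes the cancellation available and is exactly the configuration controlled by \cite{PR}. Once the obstruction is seen to vanish there, the conclusion that $\xi$ deforms away from the hyperelliptic locus follows.
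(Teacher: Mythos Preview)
Your verification that $\xi\in K$ is fine, and your geometric picture (canonical image of $C$ is the rational normal curve $R_{g-1}$, so $K$ is the space of quadrics through it, of dimension $\binom{g-1}{2}$, with only a $\binom{g-2}{2}$-dimensional subspace persisting to non-hyperelliptic curves) is correct and relevant.

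However, you miss the structural criterion from \cite{PR} that makes the proof immediate, and your proposed substitute does not close the gap. The paper's argument is simply this: by \cite[Lem.~1.1.4]{PR}, the quadratic map
\[
Q\colon H^0(\P^1,\OO(g-3))\to K,\qquad f\mapsto f\tfrac{dt}{x}\cdot ft^2\tfrac{dt}{x}+ft^2\tfrac{dt}{x}\cdot f\tfrac{dt}{x}-2ft\tfrac{dt}{x}\cdot ft\tfrac{dt}{x},
\]
induces an isomorphism between the space of quadratic forms on the $(g-2)$-dimensional space $H^0(\P^1,\OO(g-3))$ and $K$; and by \cite[Thm.~2.2.2]{PR}, a relation deforms off the hyperelliptic locus if and only if the corresponding quadratic form is \emph{degenerate}. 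Since $\xi=\sum_{i=1}^{g-3}Q(H_i)$ corresponds to a sum of $g-3$ rank-one forms on a $(g-2)$-dimensional space, it is automatically degenerate, and the conclusion follows. The entire content is the rank bound $g-3<g-2$; no obstruction computation is needed.

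Your plan instead proposes to compute the first-order obstruction $\partial_v m(\xi)$ directly and then ``promote first-order deformability to an actual deformation.'' Two problems. First, the promotion step is not automatic: vanishing of the first obstruction does not by itself give an honest deformation, and you would in effect be reproving \cite[Thm.~2.2.2]{PR} without saying how. Second, your reference to ``the explicit \v{C}ech model of Section~\ref{s-6}'' is misplaced: that section computes the second variation of the \emph{super}period map along \emph{odd} directions via Massey products, which is a different calculation from the intrinsic derivative of the classical multiplication map along an even, $\tau$-anti-invariant Kodaira--Spencer class. The cancellation you anticipate (``exploiting $u_iw_i=v_i^2$ together with $\tau$-equivariance'') is morally the same phenomenon that underlies \cite{PR}, but as written it is a hope rather than an argument.
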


\begin{proof} According to \cite[Lem.\ 1.1.4]{PR}, the quadratic map
$$Q: H^0(\P^1,\OO(g-3))\to S^2H^0(C,\omega_C): f(t)\mapsto 
f\frac{dt}{x}\ot ft^2\frac{dt}{x}+ft^2\frac{dt}{x}\ot f\frac{dt}{x} - 2ft\frac{dt}{x}\ot ft\frac{dt}{x}$$
induces an isomorphism of the space $S^2H^0(\P^1,\OO(g-3))$ with the space
of quadratic relations between the differentials on $C$. Now, by \cite[Thm.\ 2.2.2]{PR},
any quadratic relation corresponding to a degenerate quadratic form on $H^0(\P^1,\OO(g-3))^*$
can be deformed away from the hyperelliptic locus. This immediately implies our assertion.
\end{proof}

\begin{theorem}\label{generic-rank-thm}
(i) Assume that $g\ge 5$ is odd. There exists a non-empty open locus $\UU\sub\MM_g$, such that
for $C\in \UU$ and a theta-characteristic $L$ over $C$ such that $h^0(L)=0$, there exists
a quadratic relation between the differentials, $\xi\in H^0(C,\om_C)^{\ot 2}$, such that
the skew-symmetrization of $m_3(?,\xi)\in {\bigwedge}^2 H^0(\om_C\ot L)$ is nondegenerate.

\noindent
(ii) In the case $g\ge 4$ is even, the statement is that for a generic $C$ there exists $\xi$ such that the 
skew-symmetrization of $m_3(?,\xi)$ has rank $\ge 2g-4$. 
\end{theorem}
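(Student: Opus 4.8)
The plan is to reduce the statement to the explicit hyperelliptic computation of Section \ref{hyperell-comp-sec} and then transport the conclusion to a general curve by combining Proposition \ref{def-qu-rel-prop} with semicontinuity of rank. Recall that the skew-symmetrization of $m_3(?,\xi)$ is a skew form on the $(2g-2)$-dimensional space $H^0(C,\om_C\ot L)$, so its rank is even and at most $2g-2$. Under $\varphi_\alpha$ this form is the one attached to $A_\xi-A_\xi^*$ relative to the pairing \eqref{y-y'-pairing}; since that pairing makes $V^+$ and $V^-$ dual isotropic subspaces and $A_\xi-A_\xi^*$ preserves each of them, the form only pairs $V^+$ with $V^-$, and by \eqref{skew-sym-A-eq} its rank equals $2\,\rk(N)$, where $N$ records $(A_\xi-A_\xi^*)|_{V^-}$. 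The first point is that, after stripping the invertible diagonal factors $x_j,F(b_j),G'(b_j)$ (invertible for generic branch data, and irrelevant to the rank), $N$ becomes the genuinely \emph{skew} matrix
\[
N_{jk}=(b_j-b_k)\sum_{i=1}^{g-3}H_i(b_j)H_i(b_k)
\]
for $\xi$ the deformable relation \eqref{deformable-qu-rel}: plugging \eqref{deformable-qu-rel} into the symmetric part gives $S_{jk}=2(b_j-b_k)^2\sum_iH_i(b_j)H_i(b_k)$, and the factor $(b_j-b_k)^2$ cancels one power of $(b_j-b_k)$ in the denominator of \eqref{skew-sym-A-eq}. Thus $N$ is skew of size $g-1$, which is the whole source of the dichotomy: for $g$ odd, $g-1$ is even and $N$ can be nondegenerate, while for $g$ even, $g-1$ is odd and $\rk(N)\le g-2$ automatically.

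Writing $B=\operatorname{diag}(b_1,\dots,b_{g-1})$ and $h^{(i)}=(H_i(b_j))_j\in\C^{g-1}$, one has $N=[B,\Gamma]$ with $\Gamma=\sum_ih^{(i)}(h^{(i)})^{\mathsf T}$; equivalently $N$ is the skew form attached to the $2$-vector $\omega_N=\sum_{i=1}^{g-3}(Bh^{(i)})\we h^{(i)}\in\bigwedge^2\C^{g-1}$. Its rank is therefore governed by the configuration of the $2(g-3)$ vectors $h^{(i)},Bh^{(i)}$, and I would choose the $H_i$ so that these span as much of $\C^{g-1}$ as possible and $\omega_N$ is as nondegenerate as parity allows. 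Since $\rk\ge\rho$ is an open condition, it suffices to exhibit one admissible configuration (distinct $b_j$, branch points chosen so $h^0(L)=0$) attaining the target rank.

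For $g$ even I would take the monomials $H_i=t^{i-1}$, $i=1,\dots,g-3$; then $Bh^{(i)}=v_i$ in terms of the evaluation vectors $v_m=(b_j^m)_j$, so $\omega_N=\sum_{i=1}^{g-3}v_i\we v_{i-1}$ is the skew ``path'' form on the (Vandermonde-independent) vectors $v_0,\dots,v_{g-3}$, with $v_{g-2}$ in the radical. Its matrix is the weighted skew-adjacency matrix of the path $P_{g-2}$, whose Pfaffian is $\pm1$ precisely when $g-2$ is even; hence $\rk(N)=g-2$ and the skew-symmetrization has rank $2g-4$, the maximum compatible with the odd size $g-1$. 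For $g$ odd I would instead take $H_i=t^{i-1}$ for $i\le g-4$ and $H_{g-3}=t^{g-3}$, which yields
\[
\omega_N=\sum_{i=1}^{g-4}v_i\we v_{i-1}+v_{g-2}\we v_{g-3},
\]
whose graph on the full basis $v_0,\dots,v_{g-2}$ is the disjoint union of the path $P_{g-3}$ on $v_0,\dots,v_{g-4}$ and the single edge $\{v_{g-3},v_{g-2}\}$. For $g$ odd the number $g-3$ is even, so $P_{g-3}$ has a (unique) perfect matching, the whole graph does too, and its Pfaffian is $\pm1$; thus $N$ is nondegenerate, $\rk(N)=g-1$, and the skew-symmetrization is nondegenerate of rank $2g-2$.

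Finally I would globalize: the locus of triples $(C,L,\xi)$ on which the skew-symmetrization of $m_3(?,\xi)$ has rank at least the target value is open, the relations \eqref{deformable-qu-rel} deform off the hyperelliptic locus by Proposition \ref{def-qu-rel-prop}, and a generic hyperelliptic $(C_0,L_0)$ with $h^0(L_0)=0$ is a limit of non-hyperelliptic curves; hence the bound established at the explicit hyperelliptic point propagates, by semicontinuity, to a nonempty open locus $\UU\subset\MM_g$, giving nondegeneracy for $g$ odd and rank $\ge2g-4$ for $g$ even. I expect the main obstacle to lie in the bookkeeping of the first two paragraphs---isolating the genuinely skew matrix $N$ from \eqref{skew-sym-A-eq} (the diagonal normalization together with the identity $\rk(\text{skew-symmetrization})=2\,\rk(N)$) and verifying the combinatorial Pfaffian evaluations that make the parity explicit---whereas the deformation-and-semicontinuity step is formal once Proposition \ref{def-qu-rel-prop} is available.
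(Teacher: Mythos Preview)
Your proof is correct and follows the paper's strategy exactly: reduce to the hyperelliptic formula \eqref{skew-sym-A-eq}, strip the diagonal factors to isolate the genuinely skew matrix $N_{jk}=(b_j-b_k)\sum_iH_i(b_j)H_i(b_k)$, choose the $H_i$ so that $\rk N$ hits the target, and then deform off the hyperelliptic locus via Proposition \ref{def-qu-rel-prop} and semicontinuity. The only difference is the specific choice of test polynomials: the paper takes the even-degree monomials $H_i=t^{2(i-1)}$ for $i\le\lfloor(g-1)/2\rfloor$, so that the rank-$2$ summands $v_{2i-1}\wedge v_{2i-2}$ have pairwise disjoint supports and $N$ is visibly block-diagonal with invertible $2\times2$ blocks---this sidesteps your path-Pfaffian computation entirely, though your argument is equally valid.
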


\begin{proof}
(i) Let us compute the skew-symmetric form associated with the quadratic relation \eqref{deformable-qu-rel},
which we write in the form $\xi=\sum_{i=1}^{g-3}\xi_i$, where $\xi_i$ depends on $H_i$ as in the right-hand
side of \eqref{deformable-qu-rel}.
We can calculate the matrices of the operators 
$$B(i):=(A_{\xi_i}-A^*_{\xi_i})|_{V^-}$$ 
using \eqref{skew-sym-A-eq}. 
Let us consider the rescaled bases of $V^-$, 
$$d_j:=\frac{x_jG'(b_j)}{F(b_j)}e_j^-, \ \ d'_j=\frac{x_j}{F(b_j)}e_j^-, j=1,\ldots,g-1.$$
Then the coefficient of $d'_k$ in $B(\om_i,\om_j,\om_k)(d_j)$ is given by
$$B(i)_{qp}=\frac{1}{b_p-b_q}
\bigl(H_i(b_p)H_i(b_q)b_q^2+H_i(b_q)H_i(b_p)b_p^2-2H_i(b_p)H_i(b_q)b_pb_q\bigr)=H_i(b_p)H_i(b_q)(b_p-b_q).$$
We have $B(i)^t=-B(i)$ and
$$\im(B(i))=\lan \sum_k H_i(b_k) e_k, \sum_k b_kH_i(b_k) e_k\ran.$$

Now, assuming that $g$ is odd, let us consider the relation $\xi=\sum_{i=1}^{(g-1)/2} \xi_i$, where
$$H_1=1, H_2=t^2, H_3=t^4, \ldots, H_{\frac{g-1}{2}}=t^{g-3}.$$
Then the vectors spanning images of $B(i)$, $i=1,\ldots,(g-1)/2$, are evaluations of the monomials $1,t,\ldots,t^{g-2}$
at $b_1,\ldots,b_{g-1}$, so they are linearly independent.
This implies that $B=B_1+\ldots+B_{(g-1)/2}$ is nondegenerate. Hence, $A_{\xi}-A^*_{\xi}$ is nondegenerate.

Since $(g-1)/2\le g-3$, by Proposition \ref{def-qu-rel-prop}, we can find a family of curves with quadratic relation $\xi$ specializing to
the hyperelliptic curve with the above quadratic relation. Furthermore, we can extend it to a family of curves with
theta-characteristics. Thus, we will obtain a non-hyperelliptic curve $(C,L)$ with a theta-characteristic $L$ such that
$h^0(L)=0$, and a quadratic relation $\xi$ such that the skew-symmetrization of $m_3(?,\xi)$ is nondegenerate.

It remains to recall a well known fact that for non-hyperelliptic curves the product map
$$\kappa: S^2H^0(C,\om_C)\to H^0(C,\om_C^{\ot 2})$$
is surjective, so that spaces $K=\ker(\kappa)$ form a vector bundle over the non-hyperelliptic locus in $\MM_g$.
Thus, a quadratic relation $\xi$ with the above property can be found over an open locus in $\MM_g$.

\noindent
(ii) The proof is similar to that of (i), except that we consider $\xi=\sum_{i=1}^{(g-2)/2} \xi_i$, where
$$H_1=1, H_2=t^2, \ldots, H_{\frac{g-2}{2}}=t^{g-4}.$$
Then the matrix $B=B_1+\ldots+B_{(g-2)/2}$ has rank $g-2$. Hence $A_{\xi}-A^*_{\xi}$ has rank $2g-4$.
\end{proof}

\subsection{Super-Schottky ideal}

Let $\wt{\SS}^+\to \SS^+$ be the covering of $\SS^+=\SS^+_g$ corresponding to a choice of a symplectic 
basis in $H^1(C,\Z)$, and let $\wt{\UU}\sub \wt{\SS}^+$ be the preimage of the open substack
$\UU\sub\SS^+$ corresponding to theta-characteristics with trivial $H^0$.
Then we have a well defined {\it superperiod map},
$$\per:\wt{\UU}\to LG_{2g},$$
where $LG_{2g}$ is the Lagrangian Grassmannian of the $2g$-dimensional symplectic vector space over $\C$,
corresponding to the Lagrangian subbundle $\pi_*\om_{X/\wt{\UU}}$ in the trivialized symplectic bundle
$R^1\pi_*\C_{X/\wt{\UU}}$, where $X\to\wt{\UU}$ is the universal supercurve.

We define the {\it super-Schottky ideal} $\II_{s-Sch}$ on $LG_{2g}$ as
the ideal defining the schematic image of $\per$. In other words, a
local function $f$ belongs to $\II_{s-Sch}$ if and only if $\per^*f=0$.

Let $\II_{Sch}\sub LG_{2g}$ be the usual Schottky ideal corresponding
to the image of the usual period map $\wt{\MM}_g\to LG_{2g}$ (where
$\wt{\MM}_g$ is the covering of $\MM_g$ corresponding to a choice of a
symplectic basis in cohomology).  It is clear that
$\II_{s-Sch}\sub \II_{Sch}$. Furthermore, for any $f\in \II_{Sch}$,
the pull-back $\per^*f$ is even, so it belongs to the square of the
nilradical in the structure sheaf of $\wt{\SS}^+$.  Since there are
$2g-2$ odd variables on $\wt{\SS}^+$, it follows that
$$\II_{Sch}^g\sub \II_{s-Sch}.$$

\begin{theorem}\label{Schottky-thm}
  Let $d$ be the minimal number such that
  $\II_{Sch}^d\sub \II_{s-Sch}$.  Then for $g$ odd, we have $d=g$,
  while for $g$ even we have $d\ge g-1$.
\end{theorem}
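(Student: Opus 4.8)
The plan is to prove the lower bounds on $d$, since the upper bound $\II_{Sch}^g\subset\II_{s-Sch}$, and hence $d\le g$, is already established above using $\NN^{2g-1}=0$. Working locally near the period point $[\tau_0]$ of a generic curve $C$ carrying an even theta-characteristic $L$ with $h^0(C,L)=0$, I would analyze, for $f\in\II_{Sch}$, the pullback $\per^*f$. Since $f$ vanishes on the classical Schottky locus and $\per$ reduces to the classical period map modulo the nilradical, while the correction $\Om$ to the period matrix has entries in $\NN^2$ (Theorem \ref{square-root-thm} and Proposition \ref{regular-pole-prop}), a Taylor expansion gives $\per^*f=\langle df|_{\tau_0},\Om\rangle+O(\NN^4)$, so that $\per^*f\in\NN^2$ with a well-defined leading quadratic term. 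Because $f\in\II_{Sch}$, the covector $df|_{\tau_0}$ annihilates the image of the classical differential $\kappa$, hence represents an element $\xi$ of $K=\ker(S^2H^0(C,\om_C)\to H^0(C,\om_C^{\ot 2}))$.

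The key bridge is to identify this leading term with the second variation of $\per$ and thereby with the Massey product computed in Sections \ref{s-6}--\ref{s-7}. The quadratic part $\Om_2$ of $\Om$, taken modulo $\im(\kappa)$, is exactly the second variation map $d^{(2)}\per\colon\bigwedge^2 H^1(C,L^{-1})\to \Hom(H^0(\om_C),H^1(\OO_C))/\im(\kappa)$ of the theorem of Section \ref{s-6}. Under Serre duality the target is dual to $K$, so pairing $\xi=df|_{\tau_0}\in K$ with $d^{(2)}\per$ produces a skew-symmetric form on $H^1(C,L^{-1})$; by that theorem this form is precisely the skew-symmetrization of the Massey product $m_3(?,\xi)$. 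Writing $\eta_1,\dots,\eta_{2g-2}$ for the odd coordinates dual to a basis $x_1,\dots,x_{2g-2}$ of $H^1(C,L^{-1})$, the leading term of $\per^*f$ is therefore the two-form $\omega_\xi=\sum_{k<l}S_\xi(x_k,x_l)\,\eta_k\eta_l$ in the exterior algebra on the odd variables, where $S_\xi$ is the skew form attached to $\xi$, and the $\im(\kappa)$-ambiguity in $\Om_2$ is killed because $\xi\perp\im(\kappa)$.

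With this in hand I would produce the non-containments by choosing $f$ and taking powers. At a generic non-hyperelliptic $C$ infinitesimal Torelli makes $[\tau_0]$ a smooth point of the Jacobian locus, so its conormal space is exactly $K$ and every $\xi\in K$ equals $df|_{\tau_0}$ for some $f\in\II_{Sch}$. Take $\xi$ as supplied by Theorem \ref{generic-rank-thm} over its open locus. For $g$ odd, $S_\xi$ is nondegenerate, so $\omega_\xi^{\,g-1}=(g-1)!\,\Pfaff(S_\xi)\,\eta_1\cdots\eta_{2g-2}\neq0$; since the error terms in $(\per^*f)^{g-1}$ lie in $\NN^{2g}=0$, we get $\per^*(f^{g-1})=\omega_\xi^{\,g-1}\neq0$, whence $f^{g-1}\notin\II_{s-Sch}$, so $\II_{Sch}^{g-1}\not\subset\II_{s-Sch}$ and $d\ge g$; combined with $d\le g$ this gives $d=g$. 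For $g$ even, $S_\xi$ has rank $\ge 2g-4$, so $\omega_\xi^{\,g-2}\neq0$ in degree $2g-4$, while the correction terms in $(\per^*f)^{g-2}$ lie in degree $\ge 2g-2$ and cannot cancel it; hence $\per^*(f^{g-2})\neq0$, giving $\II_{Sch}^{g-2}\not\subset\II_{s-Sch}$ and $d\ge g-1$.

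The main obstacle is the bridge of the second paragraph: making the identification of the leading quadratic term of $\per^*f$ with the skew-symmetrized Massey product fully precise, in particular the compatibility of the Serre-duality identification of $K$ with the dual of $\coker(\kappa)$ with the pairings used in Sections \ref{s-6}--\ref{s-7}, and the correct bookkeeping of signs and of the $\im(\kappa)$-ambiguity in $\Om_2$. A secondary point needing care is the surjectivity of $f\mapsto df|_{\tau_0}$ onto $K$, i.e.\ that the conormal to the Jacobian locus is spanned by differentials of Schottky relations at a generic point; this follows from smoothness of the locus there via infinitesimal Torelli, but one must verify that the chosen $C$ may be taken inside the open locus of Theorem \ref{generic-rank-thm}.
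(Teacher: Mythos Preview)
Your proposal is correct and follows essentially the same approach as the paper: identify the leading quadratic term of $\per^*f$ with the skew-symmetrized Massey product paired against the conormal vector $\xi=df|_{\tau_0}\in K$, then invoke Theorem~\ref{generic-rank-thm} to control the top nonvanishing power of this two-form in the odd variables. Your exposition is considerably more detailed than the paper's very terse proof, and you have correctly flagged the two points requiring care (the identification of the leading term with the Massey product, and the realization of an arbitrary $\xi\in K$ as $df|_{\tau_0}$ at a smooth generic point of the Jacobian locus). Note incidentally that the paper's printed proof has the parities of $g$ swapped in its final sentence; your version is the correct one.
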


\begin{proof}
  At a generic point of the classical Schottky locus, for any conormal
  vector $\xi$ we can find a function $f\in \II_{Sch}$ with $\xi$ as
  its leading part. Now Theorem \ref{generic-rank-thm} implies that
  $\per^*f^{g-1}\neq 0$ for odd $g$, while $\per^*f^{g-2}\neq 0$ for
  even $g$.
\end{proof}

\end{document}